\newtheorem{lemma}{Lemma}[section]
\newtheorem{theorem}{Theorem}[section]
\newtheorem{remark}{Remark}[section]
\newtheorem{definition}{Definition}[section]
\newtheorem{example}{Example}[section]
\renewcommand{\maketag@@@}[1]{\hbox{\m@th\normalsize\normalfont#1}}
\begin{document}
	
	\title{Construction and analysis of the quadratic finite volume  methods on tetrahedral meshes}\footnote{This paper has been accepted for publication in SCIENCE CHINA Mathematics.}

\author {Peng Yang}
\address{School of Mathematics, Jilin University, Changchun 130012, China. }
\email{1443117432@qq.com}

\author {Xiang Wang}
\address{School of Mathematics, Jilin University, Changchun 130012, China. }
\email{wxjldx@jlu.edu.cn}

\author {Yonghai Li}
\address{School of Mathematics, Jilin University, Changchun 130012, China. }
\email{yonghai@jlu.edu.cn}

\thanks{This work is partially supported by the
	National Science Foundation of China (No.12071177), and the Science Challenge Project (No.
	TZ2016002).}

	%    Abstract is required.
	\begin{abstract}
		A family of quadratic finite volume method (FVM) schemes are constructed and analyzed over tetrahedral meshes. In order to prove  stability and error estimate, we propose the minimum V-angle condition on tetrahedral meshes, and the surface and volume  orthogonal conditions on dual meshes.
		Through the element analysis technique, the local stability is equivalent to a positive definiteness of a $9\times9$ element matrix, which is  difficult to analyze directly or even numerically. 
		With the help of the surface  orthogonal condition and  congruent transformation,  this element matrix is reduced into a block diagonal matrix, then we carry out the stability result under the minimum V-angle condition.
		It is worth mentioning that the minimum V-angle condition of the tetrahedral case is very different from a simple extension of the minimum angle condition for triangular meshes, while it is also convenient to use in practice. Based on the stability, we prove the optimal $ H^{1} $ and $L^2$ error estimates respectively, 
		where the orthogonal conditions play an important role in ensuring optimal $L^2$ convergence rate.
		Numerical experiments are presented to illustrate our theoretical results.
		
		\keywords{finite volume method \and tetrahedral mesh \and  orthogonal condition \and  minimum V-angle condition \and stability and convergence}
		
		% \PACS{PACS code1 \and PACS code2 \and more}
		\subjclass{65N12, 65N08}
		% \subclass{MSC code1 \and MSC code2 \and more}
	\end{abstract}
	
	\maketitle
	
	\section{Introduction}
	\label{intro}
	The finite volume method (FVM) \cite{Bank1987,Cai2003,Cai1991,ChenL2010,ChenZY2002,Eymard2000,HeWM,LiRH2000,Liebau1996,Schmidt1993,Süli1991} is one of the main numerical methods for solving partial differential equations, which is known for preserving the local conservation property. Till now, many progresses have been made on the stability, error estimate and superconvergence of the FVMs. The linear FVM schemes have been well studied on any spatial dimension  \cite{Carstensen2018,Ewing2002,Hackbusch1989,Hongqi,LiYH1999,LvJL2010,XuJC2009}, and complete results of arbitrary $k$-order FVM schemes on one dimension (1D) are given by \cite{Cao2013,WangX2021}. For high order FVMs on triangular meshes, \cite{ChenZY2012,ChenZY2015} present a unified analysis of the stability under the assumption of the minimum angle condition, which restricts the minimum interior angle of the triangular elements. While, for some quadratic FVM schemes, the angle value of this condition is improved by \cite{XuJC2009,ZhouYH2020,ZhouYH2020_2,ZouQS2017}, where \cite{ZhouYH2020,ZhouYH2020_2,ZouQS2017} are based on a new trial-to-test mapping. And, through proposing the orthogonal conditions, \cite{WangX2016} carries out optimal $L^{2}$ error estimates for arbitrary $k$-order FVM schemes on triangular meshes. For the FVMs on quadrilateral meshes, \cite{LvJL2012} proves the stability and optimal $L^2$ error estimate of the biquadratic FVM schemes, and \cite{LinYP2015,YangM2016,ZhangZM2014,ZhangZM2015} present the stability and optimal $L^2$ estimates of arbitrary high order FVM schemes by considering the bilinear form of the FVMs as the Gaussian quadrature of the bilinear form of the finite element methods (FEMs). The dual meshes of the schemes in \cite{LinYP2015,LvJL2012,YangM2016,ZhangZM2015} are based on the Gauss points. Compared with the big progresses in 1D and two dimension (2D),  the FVMs on three dimension (3D), which has more applications in practice, are mainly concentrated on the linear schemes \cite{Carstensen2018,Gao2008,LiJ2012,YangM2008}, and there are few results of high order FVMs on 3D (see, e.g., \cite{YangM2012}). Specially for the quadratic FVMs on tetrahedral meshes, no result has been published yet. 
	
	In this paper, we construct and analyze a family of quadratic FVM schemes on tetrahedral meshes for the following elliptic boundary value problem
	\begin{equation} \label{eq:epselliptic}
		\left\{ \begin{array}{rcl}
			-\nabla\cdot(\kappa\nabla u) &=&{ f \quad {\rm in}\ \Omega, }\\
			u &=&{ 0 \quad {\rm on}\ \Gamma , } \\
		\end{array} \right.
	\end{equation}
	where $\Omega \subset \mathbb{R}^{3}$ is a bounded convex polyhedron with boundary $\Gamma=\partial\Omega$,  $f \in L^{2}(\Omega)$ and the diffusion coefficient $\kappa(x_1,x_2,x_3)$ is a piecewise smooth function and bounded almost everywhere with
	positive lower and upper bounds: $ c_{*} $ and $ c^{*} $, respectively. 
	
	We introduce three parameters $ (\alpha,\beta,\gamma) $ to construct the dual mesh, and a mapping $ \Pi_{\lambda}^{*} $ from trial space to test space for theoretical analysis. For the stability and error analysis, we propose two key restrictions: the first is the orthogonal conditions on the surface and volume for the dual mesh, which control the construction of the dual elements; the second is the minimum V-angle condition for the primary tetrahedral mesh, which restricts the local shape around each vertex of the tetrahedral elements. Under the orthogonal condition on the surface and the minimum V-angle condition, we prove the local stability by element analysis and obtain optimal $ H^{1} $ error estimate. Based on an  equivalent discrete norm, the local stability is converted to a  positive definiteness of a $ 9\times 9 $ symbolic matrix. With the help of the orthogonal condition on the surface and the congruent transformation, the $ 9\times 9 $ symbolic matrix is reduced to a block diagonal matrix containing a  $ 3\times3$ matrix and a $6\times6 $ matrix, where the $ 3\times3 $ matrix is proved to be unconditionally positive definite, while the analysis of the $ 6\times6 $ matrix is a challenge. Fortunately, we derive that for fixed parameters $ (\alpha,\beta,\gamma,\lambda) $, the $ 6\times 6 $ symbolic matrix  only relies on five certain plane angles of a tetrahedral element. Then, under the minimum V-angle condition,  the positive definiteness of the $6\times6 $ matrix is guaranteed numerically, and the stability is obtained. On the other hand, under  the orthogonal conditions on the surface and volume, we prove optimal $ L^{2} $ error estimate by the Aubin-Nitsche technique.

	Let us summarize the contributions of this paper: 1) We first use three parameters $ (\alpha,\beta,\gamma) $ to  construct a family of quadratic FVM schemes on tetrahedral meshes; 2) Under the proposed minimum V-angle condition and the orthogonal condition on the surface, we prove the stability by element analysis; 3) Under the proposed orthogonal conditions on the surface and volume, we obtain optimal $ L^{2} $ error estimate.
	
	An outline of this paper is as follows. In Section~\ref{Section:2},  a family of quadratic FVM schemes on tetrahedral meshes is constructed. In Section~\ref{Section:3}, we prove the local stability by element analysis. Then, optimal $ H^{1} $ and $ L^{2} $ error estimates are given in Section~\ref{Section:4}. Numerical experiments are provided to confirm our theoretical results in Section~\ref{Section:5}. Finally,  we draw the conclusion in Section~\ref{Section:6}. Some symbolic matrices are put in Appendix A, and some relations in a tetrahedron and two proofs based on them are included in Appendix B.
	
	In the rest of this paper, $``A \lesssim B"$ means that $ A $ can be bounded by $ B $ mutiplied by a constant which is independent of the parameters that $ A $ and $ B $ may depend on. $``A \sim B"$ means both $``A \lesssim B"$ and $``B \lesssim A"$.
	\section{Preliminary}\label{Section:2}
	\subsection{The quadratic finite volume method schemes}
	\paragraph{Primary mesh and trial function space.}
	Let the primary mesh $\mathcal{T}_{h}=\left\lbrace K\right\rbrace$ be a conforming tetrahedral partition of $ \Omega $, where $ h=\max_{K \in \mathcal{T}_{h}} h_{K} $ and $ h_{K} $ is the length of the largest edge of the tetrahedral element $ K $. Assume that $\mathcal{T}_{h}$ is a regular partition, i.e.,  there exists a positive constant $ \sigma $, independent of $ h $, satisfying
	\begin{equation}\label{shape-regular}
		\frac{h_{K}}{\rho_{K}}\leq\sigma, \qquad \forall K \in \mathcal{T}_{h},
	\end{equation}
	where $ \rho_{K} $ is the diameter of the inscribed sphere of $ K $. 
	
	Denote the four vertices and six edge midpoints of $ K $ by $ \mathcal{N}_{K} $, and let $ \mathcal{N}_{h}=\cup_{K \in \mathcal{T}_{h} } \mathcal{N}_{K}$.  Then, define the trial function space over  $ \mathcal{T}_{h} $ as
	\begin{equation*}
		\mathit{U}_{h}=\left\lbrace u_{h}: u_{h}\in \mathit{C}(\overline{\Omega}),\,\, u_{h}\vert_{K} \in P^{2}(K)\,\, \forall K\in \mathcal{T}_{h}, \,\, u_{h}|_{\partial \Omega}=0 \right\rbrace ,
	\end{equation*}
	where $ P^{2}(K) $ is the quadratic polynomial space on $ K $, and $ u_{h}\vert_{K} $ is determined by its ten node values on $ \mathcal{N}_{K} $.
	
	\paragraph{Dual mesh and test function space.}
	\begin{figure}[ht!]
		\centering
		\begin{minipage}[t]{.9\textwidth}
			\includegraphics[width=220pt]{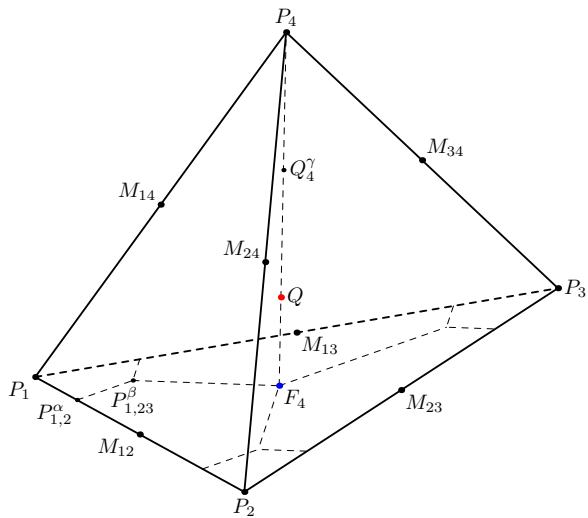}
			\caption{A  tetrahedral element $ K= \bigtriangleup^{4}P_{1}P_{2}P_{3}P_{4}$.}
			\label{fig:tetrahedron}
		\end{minipage}
	\end{figure}
	\begin{figure}[ht!]
		\centering
		\begin{minipage}[t]{.9\textwidth}
			\includegraphics[width=220pt]{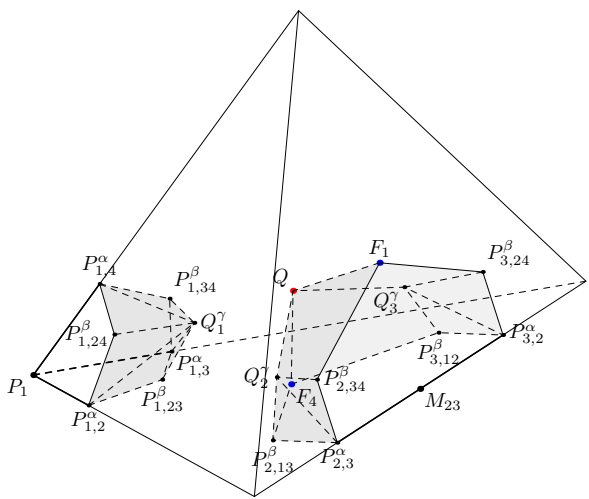}
			\caption{Two subdomains $ D^{K}_{P_{1}} $, $ D^{K}_{M_{23}} $ of the polyhedral partition.}
			\label{fig:dualareas}
		\end{minipage}
	\end{figure}   %ctrl+T(U)D
	Define two sets
	\begin{equation*}
		\mathcal{Z}_{n}^{(1)}=\left\lbrace 1,2,...,n \right\rbrace,\qquad  \mathcal{Z}_{4}^{(2)}=\left\lbrace (j,k)\vert j,k\in\mathcal{Z}_{4}^{(1)},j<k \right\rbrace.
	\end{equation*}
	For a tetrahedral element $ K=\bigtriangleup^{4}P_{1}P_{2}P_{3}P_{4}$ (see Fig.~\ref{fig:tetrahedron}), $ T_{i} $ is the triangular face of $ K $ opposite to the vertex $ P_{i}$  $\big(i\in\mathcal{Z}_{4}^{(1)}\big) $, $ F_{i} $ is the barycenter of $ T_{i}$  $\big(i\in\mathcal{Z}_{4}^{(1)}\big) $,  and $ M_{jk} $ is the midpoint of the edge $ \overline{P_{j}P_{k}}$  $\big((j,k)\in\mathcal{Z}_{4}^{(2)}\big) $.
	It is well known that the four central lines $ \left\lbrace \overline{P_{i}F_{i}} \right\rbrace _{i\in\mathcal{Z}_{4}^{(1)}}$ intersect at $ Q $, the barycenter of $K$, and
	\begin{equation*}
		\begin{split}
			\frac{|\overline{P_{i_{1}}F_{i_{2}}}|}{|\overline{P_{i_{1}}M_{jk}}|}&=\frac{2}{3},\qquad  (j,k)\in\mathcal{Z}_{4}^{(2)}, \left\lbrace i_{1},i_{2},j,k\right\rbrace =\mathcal{Z}_{4}^{(1)}, \\
			\frac{|\overline{P_{i}Q}|}{|\overline{P_{i}F_{i}}|}&=\frac{3}{4},\qquad i\in\mathcal{Z}_{4}^{(1)}.
		\end{split}
	\end{equation*}
	
	We introduce three parameters $ \alpha $, $ \beta$, $ \gamma$ to locate the dual nodes in $ K$, such that
	\begin{equation} \label{parameters}
		\begin{split}
			\frac{|\overline{P_{i_1}P^{\alpha}_{i_{1},i_{2}}}|}{|\overline{P_{i_{1}}P_{i_{2}}}|}&=\alpha\in(0,1/2), \qquad i_{1},i_{2}\in\mathcal{Z}_{4}^{(1)},\,i_{1}\neq i_{2},\\ \frac{|\overline{P_{i}P^{\beta}_{i,jk}}|}{|\overline{P_{i}M_{jk}}|}&=\beta\in(0,2/3),\qquad i\in\mathcal{Z}_{4}^{(1)},(j,k)\in\mathcal{Z}_{4}^{(2)},i\notin \left\lbrace j,k \right\rbrace,\\
			\frac{|\overline{P_{i}Q^{\gamma}_{i}}|}{|\overline{P_{i}F_{i}}|}&=\gamma\in(0,3/4),\qquad i\in\mathcal{Z}_{4}^{(1)},
		\end{split}
	\end{equation}
	where $P^{\alpha}_{i_{1},i_{2}}$, $P^{\beta}_{i,jk}$, $Q^{\gamma}_{i}$ are the dual nodes on the edges $ \overline{P_{i_{1}}P_{i_{2}}}$, the midlines $\overline{P_{i}M_{jk}}$ 
	and the central lines  $\overline{P_{i}F_{i}}$, respectively. See $ P^{\alpha}_{1,2} $ on $ \overline{P_{1}P_{2}}$, $ P^{\beta}_{1,23} $ on $ \overline{P_{1}M_{23}}$, and $ Q^{\gamma}_{4} $ on $ \overline{P_{4}F_{4}}$ in Fig.~\ref{fig:tetrahedron}.
	
	Now, we show how to connect the above dual nodes in each element $ K $ to form the dual mesh. For every dual node $ P^{\beta}_{i,jk} $ on the triangular face $ T_{l}$ $( \left\lbrace i,j,k,l \right\rbrace=\mathcal{Z}_{4}^{(1)})  $, connect the following three line segments
	\begin{equation*}
		\overline{P^{\beta}_{i,jk}P^{\alpha}_{i,j}},\quad\overline{P^{\beta}_{i,jk}P^{\alpha}_{i,k}},\quad\overline{P^{\beta}_{i,jk}F_{l}}.
	\end{equation*}
	See connecting lines on $ T_{4} $ in Fig.~\ref{fig:tetrahedron}. For every  dual node $ Q^{\gamma}_{i} $ in the interior of $ K $, connect the following seven line segments
	\begin{equation*}
		\overline{Q^{\gamma}_{i}P^{\alpha}_{i,i_{1}}}\,(i_{1}\in\mathcal{Z}_{4}^{(1)}\setminus\{i\}),\quad
		\overline{Q^{\gamma}_{i}P^{\beta}_{i,jk}}\,\big((j,k)\in\mathcal{Z}_{4}^{(2)},i\notin \left\lbrace j,k \right\rbrace\big),\quad
		\overline{Q^{\gamma}_{i}F_{i}}.
	\end{equation*}
	Then, we obtain a polyhedral partition $ \left\lbrace D^{K}_{P}, P \in \mathcal{N}_{K}\right\rbrace $ of $ K $, where $ D^{K}_{P} $ is a subdomain of $ K $ surrounding  $ P $.  See $ D^{K}_{P_{1}} $ surrounding $ P_{1} $ and $ D^{K}_{M_{23}} $ surrounding $ M_{23} $ in Fig.~\ref{fig:dualareas}. Denote by $ K^{\ast}_{P}=\cup_{K \in \mathcal{T}_{h} }D^{K}_{P} $ the dual element associated with $ P \in \mathcal{N}_{h} $, and $ \mathcal{T}_{h}^{*}=\left\lbrace K^{\ast}_{P}, P \in \mathcal{N}_{h} \right\rbrace $ the dual mesh.
	
	The test function space over $ \mathcal{T}_{h}^{*} $ is defined as
	\begin{equation*}
		\mathit{V}_{h}=\{ v_h\in L^{2}(\Omega) : v_h\vert_{\mathit{K}_{P}^{*}}=\mathrm{constant},\, \forall\mathit{K}_{P}^{*}\in\mathcal{T}_{h}^{*}
		;\, v_h\vert_{\mathit{K}_{P}^{*}}=0, \forall P\in\partial \Omega \cap \mathcal{N}_{h} \}.
	\end{equation*}
	\paragraph{The  quadratic FVM schemes.} The quadratic FVM for (\ref{eq:epselliptic}) is to find $u_{h}\in U_{h}$, such that
	\begin{equation} \label{eq:FVM}
		a_{h}(u_{h},v_{h})=(f,v_{h})\qquad \forall v_{h}\in \mathit{V}_{h},
	\end{equation}
	where 
	\begin{equation}\label{def:bilinear form}
		a_{h}(u_{h},v_{h}) = -\sum\limits_{K^{*}\in\mathcal{T}_{h}^{*}}  {\iint_{\partial K^{*}} (\kappa\nabla u_{h})\cdot \textbf{n}\, v_{h} \,\mathrm{d} S},\qquad
		(f,v_{h}) = \sum\limits_{K^{*}\in\mathcal{T}_{h}^{*}} \iiint_{K^{*}} f  v_{h} \,\mathrm{d} x_1\mathrm{d} x_2\mathrm{d} x_3.
	\end{equation}
	Here $\textbf{n}$ is the unit outer normal vector of $\partial K^{*}$.
	\begin{remark}
		The dual mesh $ \mathcal{T}_{h}^{*} $ depends on the three parameters $ (\alpha,\beta,\gamma)$, so equation (\ref{eq:FVM}) actually leads to a family of quadratic FVM schemes.
	\end{remark}
	\subsection{Volume coordinates}
	We present the volume coordinates related with a tetrahedron. Let $ K=\bigtriangleup^{4}P_{1}P_{2}P_{3}P_{4}$ be a tetrahedron with vertices $ P_{i}=(x_{1,i},x_{2,i},x_{3,i})$ for $i\in\mathcal{Z}_{4}^{(1)}$ (see Fig.~\ref{fig:tetrahedron}). If these vertices are not coplanar, $ K$ has positive volume
	\begin{equation*}
		|K|=\frac{1}{6}\left| \begin{array}{cccc}
			1&x_{1,1}&x_{2,1}&x_{3,1}\\
			1&x_{1,2}&x_{2,2}&x_{3,2}\\
			1&x_{1,3}&x_{2,3}&x_{3,3}\\
			1&x_{1,4}&x_{2,4}&x_{3,4}\\
		\end{array}
		\right|.
	\end{equation*}
	Let $ K_i $ $(i\in\mathcal{Z}_{4}^{(1)})$ be the tetrahedrons subtended at $ P=(x_1,x_2,x_3) $ by the faces of $ K $, their volumes are $ |K_i| $ $(i\in\mathcal{Z}_{4}^{(1)})$, where $ |K_i| $ is obtained from $ |K| $ by replacing the elements $ 1,x_{1,i},x_{2,i},x_{3,i} $ by $ 1,x_{1},x_{2},x_{3} $.
	The volume coordinates $ L_{i}$ $ (i\in\mathcal{Z}_{4}^{(1)}) $ are defined by the volume-ratios
	\begin{equation}\label{def:volume coordinates}
		L_{i}=\dfrac{|K^{(i)}|}{|K|},\quad  i\in\mathcal{Z}_{4}^{(1)}.
	\end{equation}
	
	We have relations between $ (L_1,L_2,L_3,L_4) $ and $ (x_1,x_2,x_3) $ as follows
	\begin{equation}\label{eq:relation-coordinates}
		\left\{
		\begin{array}{lcr}
			x_1=x_{1,1}L_{1}+x_{1,2}L_{2}+x_{1,3}L_{3}+x_{1,4}L_{4},\\
			x_2=x_{2,1}L_{1}+x_{2,2}L_{2}+x_{2,3}L_{3}+x_{2,4}L_{4},\\
			x_3=x_{3,1}L_{1}+x_{3,2}L_{2}+x_{3,3}L_{3}+x_{3,4}L_{4},\\
			L_{1}+L_{2}+L_{3}+L_{4}=1,
		\end{array}
		\right.
	\end{equation}
	which transforms the reference element
	\begin{equation}\label{reference element}
		\hat{K}=\left\lbrace (L_{1},L_{2},L_{3})|L_{1}\geq 0,L_{2}\geq 0,L_{3}\geq 0,L_{1}+L_{2}+L_{3}\leq 1  \right\rbrace,
	\end{equation}
	into any tetrahedral element $ K=\bigtriangleup^{4}P_{1}P_{2}P_{3}P_{4}$.
	
	A direct calculation of (\ref{def:volume coordinates}) yields
	\begin{equation}\label{eq:nabda_L}
		\nabla L_{i}=(\dfrac{\partial L_{i}}{\partial x_{1}},\dfrac{\partial L_{i}}{\partial x_{2}},\dfrac{\partial L_{i}}{\partial x_{3}})^{T}=-\dfrac{|T_{i}|}{3|K|}\textbf{n}_{i},\quad  i\in\mathcal{Z}_{4}^{(1)},
	\end{equation}
	where  $\textbf{n}_{i},|T_{i}|$ are the unit outer normal vector and area of  $ T_{i} $ respectively. Let $ \theta_{jk} $ be the dihedral angle associated with the edge $ \overline{P_{j}P_{k}}$  $\big((j,k)\in\mathcal{Z}_{4}^{(2)}\big) $ in $ K $, and
	\begin{equation}\label{def:rij}
		\begin{array}{lcl}
			r_{jk}=|\overline{P_{j}P_{k}}|\cot\theta_{jk}\quad\forall (j,k)\in\mathcal{Z}_{4}^{(2)},\qquad
			R_{i}=\sum \limits_{(j,k)\in\mathcal{Z}_{4}^{(2)},i\notin\left\lbrace j,k\right\rbrace }r_{jk}\quad\forall i\in\mathcal{Z}_{4}^{(1)}.
		\end{array}
	\end{equation}
	Then we have Lemma~\ref{lemma:relation Li}.
	\begin{lemma}\label{lemma:relation Li}
		For the volume coordinates $ L_i $ $ (i\in\mathcal{Z}_{4}^{(1)}) $ given by (\ref{def:volume coordinates}), there holds
		\begin{equation*}
			\left\{
			\begin{array}{ll}
				6|K|(\nabla L_{j_{1}}\!\!\cdot\!\nabla L_{k_{1}})\!=\!-r_{j_{2}k_{2}},\qquad\quad   &(j_{1},k_{1}),(j_{2},k_{2})\in\mathcal{Z}_{4}^{(2)},\,\left\lbrace j_{1},k_{1},j_{2},k_{2}\right\rbrace=\mathcal{Z}_{4}^{(1)}, \\[0.35 cm]
				6|K|(\nabla L_{i}\!\cdot\!\nabla L_{i})=R_{i},\qquad &i\in\mathcal{Z}_{4}^{(1)}.
			\end{array} 
			\right.
		\end{equation*}
	\end{lemma}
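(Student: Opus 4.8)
The plan is to start from the closed form (\ref{eq:nabda_L}) for $\nabla L_i$ and translate every inner product of gradients into face areas, unit normals and dihedral angles. Since $\nabla L_i=-\frac{|T_i|}{3|K|}\mathbf{n}_i$ and each $\mathbf{n}_i$ is a unit vector, for any $i\in\mathcal{Z}_4^{(1)}$ one has $\nabla L_i\cdot\nabla L_i=\frac{|T_i|^2}{9|K|^2}$, while for distinct indices $\nabla L_{j_1}\cdot\nabla L_{k_1}=\frac{|T_{j_1}|\,|T_{k_1}|}{9|K|^2}\,(\mathbf{n}_{j_1}\cdot\mathbf{n}_{k_1})$. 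Thus the lemma is equivalent to two purely geometric identities in the tetrahedron, and the whole proof reduces to evaluating $\mathbf{n}_{j_1}\cdot\mathbf{n}_{k_1}$ and packaging the resulting areas correctly.

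For the off-diagonal relation I would first observe that the faces $T_{j_1}$ and $T_{k_1}$ (opposite $P_{j_1}$ and $P_{k_1}$) meet precisely along the edge $\overline{P_{j_2}P_{k_2}}$, whose dihedral angle is $\theta_{j_2k_2}$. Because $\mathbf{n}_{j_1},\mathbf{n}_{k_1}$ are \emph{outer} normals, the angle between them is the supplement of the dihedral angle, so $\mathbf{n}_{j_1}\cdot\mathbf{n}_{k_1}=-\cos\theta_{j_2k_2}$. Substituting gives $6|K|\,\nabla L_{j_1}\cdot\nabla L_{k_1}=-\frac{2|T_{j_1}|\,|T_{k_1}|}{3|K|}\cos\theta_{j_2k_2}$, and comparing with $r_{j_2k_2}=|\overline{P_{j_2}P_{k_2}}|\cot\theta_{j_2k_2}$ from (\ref{def:rij}) shows the claim is equivalent to the tetrahedral volume identity
\[
2\,|T_{j_1}|\,|T_{k_1}|\,\sin\theta_{j_2k_2}=3\,|K|\,|\overline{P_{j_2}P_{k_2}}|.
\]
This I would prove by taking $T_{k_1}$ as base: the remaining vertex $P_{k_1}$ lies at distance $2|T_{j_1}|/|\overline{P_{j_2}P_{k_2}}|$ from the common edge inside the plane of $T_{j_1}$, so its height over the base plane equals that distance times $\sin\theta_{j_2k_2}$, and $|K|=\tfrac13|T_{k_1}|\cdot(\text{height})$ yields the formula at once.

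For the diagonal relation I would avoid a second computation and instead exploit $\sum_{i=1}^4 L_i=1$ from (\ref{eq:relation-coordinates}), which gives $\sum_{i=1}^4\nabla L_i=0$. Hence $\nabla L_i\cdot\nabla L_i=-\sum_{j\neq i}\nabla L_i\cdot\nabla L_j$, and multiplying by $6|K|$ and inserting the already-proved off-diagonal identity turns each term into $r_{m_{ij}n_{ij}}$, where $\{m_{ij},n_{ij}\}=\mathcal{Z}_4^{(1)}\setminus\{i,j\}$. As $j$ ranges over the three indices different from $i$, the complementary pairs $\{m_{ij},n_{ij}\}$ run exactly over the three edges not containing $i$, so the sum collapses to $R_i$ as defined in (\ref{def:rij}).

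The only genuine obstacle is the volume identity $2|T_{j_1}||T_{k_1}|\sin\theta_{j_2k_2}=3|K|\,|\overline{P_{j_2}P_{k_2}}|$ and, hand in hand with it, the sign bookkeeping in $\mathbf{n}_{j_1}\cdot\mathbf{n}_{k_1}=-\cos\theta_{j_2k_2}$; once these elementary but slightly fiddly facts about a tetrahedron are established, both identities follow by pure substitution. Everything else is routine, and the diagonal case in particular comes essentially for free from the off-diagonal one.
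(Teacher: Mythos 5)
Your proposal is correct and follows essentially the same route as the paper's proof: both compute the off-diagonal product from (\ref{eq:nabda_L}) together with $\textbf{n}_{j_1}\cdot\textbf{n}_{k_1}=-\cos\theta_{j_2k_2}$, convert it to $-r_{j_2k_2}$ via the volume formula $3|K|=2|T_{j_1}||T_{k_1}|\sin\theta_{j_2k_2}/|\overline{P_{j_2}P_{k_2}}|$, and then obtain the diagonal relation from $\nabla(L_1+L_2+L_3+L_4)=0$. The only difference is that you supply proofs of the two geometric facts (the volume identity and the sign of the normal product) that the paper simply cites, which is a harmless addition of detail.
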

	\begin{proof}
		By (\ref{eq:nabda_L}) and $ \textbf{n}_{j_{1}}\cdot\textbf{n}_{k_{1}}=-\cos\theta_{j_{2}k_{2}} $, we have
		\begin{equation*}
			6|K|(\nabla L_{j_{1}}\!\!\cdot\!\nabla L_{k_{1}})=-\dfrac{2|T_{j_{1}}||T_{k_{1}}|}{3|K|}\cos\theta_{j_{2}k_{2}}.
		\end{equation*}
		Then the fisrt relation follows from the volume formula that $ 3|K|=(2|T_{j_{1}}||T_{k_{1}}|\sin\theta_{j_{2}k_{2}})/|\overline{P_{j_{2}}P_{k_{2}}}| $.
		Combining the fact $ \nabla(L_{1}\!+\!L_{2}\!+\!L_{3}\!+\!L_{4})\!=\!0 $, we  have the second relation.
		\end{proof}
	\subsection{The mapping from  trial space to test  space}
	We define a transform operator from  trial space to test  space, which is  meaningful in the theoretical analysis of the quadratic FVM schemes, especially for stability analysis.
	\begin{definition}\label{definition:mapping}
		For $ \lambda\in \mathbb{R},\lambda\neq0 $, we define a  transform operator $ \Pi_{\lambda}^{*}$ from the trial space  $\mathit{U}_{h} $ to the test space $\mathit{V}_{h} $, such that for any $u_h\in\mathit{U}_{h} $, 
		\begin{equation*}
			\left\{
			\begin{array}{lcl}
				(\Pi^{\ast}_{\lambda}u_{h}) (P) =u_{h}(P), \\[0.2cm]
				(\Pi^{\ast}_{\lambda}u_{h}) (M )= \dfrac{1-\lambda}{2} \big(u_{h}(P_{M}^{1})+u_{h}(P_{M}^{2})\big) +\lambda u_{h}(M),
			\end{array} 
			\right.
		\end{equation*}
		where $ P\in\mathcal{N}_{h}  $ is the vertex, and $ M\in\mathcal{N}_{h} $ is the midpoint of the edge $ \overline{P_{M}^{1}P_{M}^{2}} $.
	\end{definition}
	\begin{remark}\label{remark:2}
		The mapping $ \Pi_{\lambda}^{*} $ is proposed only for the theoretical analysis of the quadratic FVM schemes, and it has no effect on pratical computations of these schemes.
		While taking $ \lambda=1 $,  $ \Pi_{1}^{*}$ is the traditional mapping $ \Pi_{h}^{*} $ (see \cite{LiRH2000}).
	\end{remark}
	
	For simplicity, we write the six edge midpoints in  $ K=\bigtriangleup^{4}P_{1}P_{2}P_{3}P_{4} $ (see Fig.~\ref{fig:tetrahedron}) as
	\begin{equation*}
		P_{5}=M_{23},\, P_{6}=M_{13},\, P_{7}=M_{12},\,
		P_{8}=M_{14},\, P_{9}=M_{24},\, P_{10}=M_{34}.
	\end{equation*}
	For each nodal point $ P_{i}$  $(i\in \mathcal{Z}_{10}^{(1)}) $, denote by $ \phi_{P_{i}}$ or $ \phi_{i} $ the corresponding local  quadratic Lagrange basis function, and $ \chi_{i} $ the corresponding local characteristic function of dual element $ \mathit{K}_{P_{i}}^{*}$. Let
	\begin{equation*}\label{def:Phi}
		\Phi=(\phi_{1},\phi_{2},...,\phi_{10})^T,\quad
		\varLambda=(\chi_{1},\chi_{2},...,\chi_{10})^T,
	\end{equation*}
	and
	\begin{equation}\label{S}
		\textbf{S}=
		{\scriptsize \begin{pmatrix}
				1&0&0&0&0&\frac{1-\lambda}{2}&\frac{1-\lambda}{2}&\frac{1-\lambda}{2}&0&0\\
				0&1&0&0&\frac{1-\lambda}{2}&0&\frac{1-\lambda}{2}&0&\frac{1-\lambda}{2}&0\\
				0&0&1&0&\frac{1-\lambda}{2}&\frac{1-\lambda}{2}&0&0&0&\frac{1-\lambda}{2}\\
				0&0&0&1&0&0&0&\frac{1-\lambda}{2}&\frac{1-\lambda}{2}&\frac{1-\lambda}{2}\\
				0&0&0&0&\lambda&0&0&0&0&0\\
				0&0&0&0&0&\lambda&0&0&0&0\\
				0&0&0&0&0&0&\lambda&0&0&0\\
				0&0&0&0&0&0&0&\lambda&0&0\\
				0&0&0&0&0&0&0&0&\lambda&0\\
				0&0&0&0&0&0&0&0&0&\lambda\\
		\end{pmatrix}}.
	\end{equation}
	According to Definition~\ref{definition:mapping}, Lemma~\ref{lemma:Pi} shows a relation between $ \Pi_{\lambda}^{*} $ and  $ \Pi_{1}^{*} $ associated with $ \textbf{S} $. 
	%The relation between $ \Pi_{\lambda}^{*} $ and  $ \Pi_{1}^{*} $ is established in Lemma~\ref{lemma:Pi} through the following $ 10\times 10 $ parameter matrix
	
	\begin{lemma} \label{lemma:Pi}
		For the mapping $ 	\Pi^{\ast}_{\lambda} $ restricted on any tetrahedral element $ K $, there holds
		\begin{equation}\label{eq:relation_mapping}
			\Pi^{\ast}_{\lambda}\Phi=\textbf{S} \varLambda=\textbf{S} \Pi^{\ast}_{1}\Phi.  
		\end{equation}
	\end{lemma}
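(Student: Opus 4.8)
The plan is to verify (\ref{eq:relation_mapping}) by direct computation, reading off the matrix $\textbf{S}$ one row at a time, the only real ingredients being the interpolatory property $\phi_j(P_i)=\delta_{ij}$ of the Lagrange basis and the piecewise-constant structure of the test space. First I would record the representation of test functions: since every $v_h\in V_h$ is constant on each dual cell, its restriction to $K$ equals $v_h=\sum_{i\in\mathcal{Z}_{10}^{(1)}}v_h(P_i)\,\chi_i$, where $v_h(P_i)$ is the constant value of $v_h$ on the dual element $K^{*}_{P_i}$. Because $\Pi^{*}_{\lambda}\phi_j\in V_h$, this gives $\Pi^{*}_{\lambda}\phi_j=\sum_{i}(\Pi^{*}_{\lambda}\phi_j)(P_i)\,\chi_i$. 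Comparing with the $j$-th component $\sum_i S_{ji}\chi_i$ of $\textbf{S}\varLambda$, the first equality is equivalent to the entrywise identity $S_{ji}=(\Pi^{*}_{\lambda}\phi_j)(P_i)$, so it remains to evaluate these nodal values.

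Next I would compute them from Definition~\ref{definition:mapping}. For a vertex basis function $\phi_j$ ($j\in\mathcal{Z}_{4}^{(1)}$): evaluation at a vertex $P_i$ gives $\phi_j(P_i)=\delta_{ij}$, producing the single $1$ in column $j$; evaluation at a midpoint $P_i=M_{pq}$ gives $\frac{1-\lambda}{2}\big(\delta_{jp}+\delta_{jq}\big)$, which equals $\frac{1-\lambda}{2}$ precisely when $P_j$ is an endpoint of $\overline{P_pP_q}$, i.e. for the three midpoints incident to $P_j$. This reproduces the first four rows of (\ref{S}). For a midpoint basis function $\phi_j$ ($j\in\{5,\dots,10\}$): its value at every vertex is $0$, and at a midpoint $P_i=M_{pq}$ it equals $\frac{1-\lambda}{2}\big(\delta_{jp}+\delta_{jq}\big)+\lambda\,\delta_{ij}=\lambda\,\delta_{ij}$, since $p,q\le 4<j$. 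This yields $\lambda$ on the diagonal and gives the lower six rows. Assembling all ten rows reproduces $\textbf{S}$ exactly, establishing $\Pi^{*}_{\lambda}\Phi=\textbf{S}\varLambda$.

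For the second equality I would specialize to $\lambda=1$. Then $\frac{1-\lambda}{2}=0$ and the midpoint rule in Definition~\ref{definition:mapping} collapses to $(\Pi^{*}_{1}u_h)(M)=u_h(M)$, so $\Pi^{*}_{1}$ is the classical interpolant and $\Pi^{*}_{1}\phi_j=\sum_i\phi_j(P_i)\chi_i=\sum_i\delta_{ij}\chi_i=\chi_j$. Componentwise this is $\Pi^{*}_{1}\Phi=\varLambda$, whence $\textbf{S}\,\Pi^{*}_{1}\Phi=\textbf{S}\varLambda$ and the full chain (\ref{eq:relation_mapping}) follows. (Equivalently, one simply observes that setting $\lambda=1$ in $\textbf{S}$ gives the identity matrix, which is consistent with $\Pi^{*}_{1}\Phi=\varLambda$.)

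The argument is elementary and contains no analytic difficulty; the only step demanding care is the edge--midpoint bookkeeping, namely matching the fixed labeling $P_5=M_{23},\,P_6=M_{13},\,P_7=M_{12},\,P_8=M_{14},\,P_9=M_{24},\,P_{10}=M_{34}$ against the two vertex indices $p,q$ touched by each midpoint, so that the off-diagonal pattern of $\frac{1-\lambda}{2}$ entries lands in the correct columns of (\ref{S}). I expect this indexing, rather than any conceptual obstacle, to be the main source of potential error.
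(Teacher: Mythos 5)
Your proposal is correct and is essentially the paper's own argument: the paper writes a generic $u_h=\textbf{u}_{K}^{T}\Phi$, observes from Definition~\ref{definition:mapping} that the nodal values of $\Pi^{\ast}_{\lambda}u_h$ are $\widetilde{\textbf{u}}_{K}^{T}=\textbf{u}_{K}^{T}\textbf{S}$, and strips off the arbitrary coefficient vector, which is exactly your entrywise identity $S_{ji}=(\Pi^{\ast}_{\lambda}\phi_j)(P_i)$ specialized to $\textbf{u}_K$ equal to coordinate vectors. Your version merely makes explicit the nodal-value bookkeeping that the paper compresses into the phrase ``it is observed,'' including the same concluding step $\Pi^{\ast}_{1}\Phi=\varLambda$ at $\lambda=1$.
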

	%\begin{proof}
	%	Definition~\ref{definition:mapping} yields
	%	\begin{equation*}
		%		(\Pi^{\ast}_{\lambda}u_{h})(P_{i})=\textbf{u}_{K}^{T}\textbf{S}(:,i), \quad i\in\mathcal{Z}_{10}^{(1)},
		%		\end{equation*}
	%	where $ \textbf{u}_{K}=\big(u_{h} (P_{1}),u_{h} (P_{2}),...,u_{h} (P_{10})\big)^T $, and $ \textbf{S}(:,i) $ is the i-th column of $\textbf{S}  $.
	%	
	%	Noticing $ u_{h}=\textbf{u}_{K}^{T}\Phi $ and $ \chi_{i_{1}}(P_{i_{2}}) =\delta_{i_{1}i_{2}}$  $ (i_{1},i_{2}\in\mathcal{Z}_{10}^{(1)}) $, one gets
	%	\begin{equation*}
		%		\textbf{u}_{K}^{T}(\Pi^{\ast}_{\lambda}\Phi)(P_{i})=(\Pi^{\ast}_{\lambda}u_{h})(P_{i})=\textbf{u}_{K}^{T}\textbf{S}(:,i)=\textbf{u}_{K}^{T}\textbf{S}\varLambda(P_{i}), \quad  i\in\mathcal{Z}_{10}^{(1)},
		%		\end{equation*} 
	%	which completes the proof since $ P_{i} $ is arbitrary. 
	%\end{proof}
\begin{proof}
	We have the following forms of piecewise quadratic function $ u_{h} $ and piecewise constant function $ \Pi^{\ast}_{\lambda}u_{h} $ on $ K $
	\begin{equation*}
		u_{h}=\textbf{u}_{K}^{T}\Phi,\qquad \Pi^{\ast}_{\lambda}u_{h}=\widetilde{\textbf{u}}_{K}^{T}\varLambda,
	\end{equation*}
	where $ \textbf{u}_{K}=(u_{1},u_{2},...,u_{10})^T $ with $ u_{i}=u_{h}(P_{i}) $, and $ \widetilde{\textbf{u}}_{K}=(\tilde{u}_{1},\tilde{u}_{2},...,\tilde{u}_{10})^T  $ with $ \tilde{u}_{i}=(\Pi^{\ast}_{\lambda}u_{h})(P_{i}) $  for $i\in\mathcal{Z}_{10} $. By Definition~\ref{definition:mapping}, it is observed  that
	\begin{equation*}
		\widetilde{\textbf{u}}_{K}^{T}=\textbf{u}_{K}^{T}\textbf{S}.
	\end{equation*}
	Then, we have
	\begin{equation*}
		\textbf{u}_{K}^{T}\Pi^{\ast}_{\lambda}\Phi=\Pi^{\ast}_{\lambda}u_{h}=\textbf{u}_{K}^{T}\textbf{S}\varLambda
	\end{equation*}
	which indicates
	$ \Pi^{\ast}_{\lambda}\Phi=\textbf{S}\varLambda $.
	Taking $ \lambda=1 $, we get $ \Pi^{\ast}_{1}\Phi=\varLambda$, and this completes the proof. 
	 \end{proof}

%\begin{remark}
%The	$ 10\times 10 $ parameter matrix $ \textbf{S} $
%\end{remark}
\subsection{The orthogonal conditions}
The orthogonal conditions proposed for the FVM schemes on triangular meshes \cite{WangX2016} are used to prove optimal $ L^{2} $ error estimate. Here we propose the orthogonal conditions on the surface and  volume for the quadratic FVM schemes on tetrahedral meshes. The orthogonal condition on the surface is also  helpful to stability analysis.
\begin{definition}[Orthogonal conditions]
	A quadratic FVM scheme or the corresponding dual mesh $ \mathcal{T}^{*}_{h} $ is called to satisfy the orthogonal condition on the surface if the following equation associated with the mapping $ \Pi_{\lambda}^{*} $ holds
	\begin{equation} \label{orthogonal_plane}
		\iint_{T_{i}}g_{1}(v_{1}-\Pi_{\lambda}^{*}v_{1}) \,\mathrm{d} S =0\qquad  \forall g_{1},v_{1}\in P^{1}(T_{i}),\, T_{i}\in \partial K,\,\mathit{K}\in\mathcal{T}_{h},
	\end{equation}
	and it is called to satisfy the orthogonal condition on the volume if the following equation associated with $ \Pi_{\lambda}^{*} $ holds
	\begin{equation}\label{orthogonal_space}
		\iiint_{K}g_{2}(v_{2}-\Pi_{\lambda}^{*}v_{2}) \,\mathrm{d} x_1\mathrm{d} x_2\mathrm{d} x_3 =0\qquad  \forall g_{2},v_2\in P^{1}(K),\,\mathit{K}\in\mathcal{T}_{h}.
	\end{equation}
	Here $ P^{1} $ is the linear function space.
\end{definition}
\begin{lemma}\label{lemma:equivalent orthogonal condition}
	The orthogonal condition on the surface (\ref{orthogonal_plane}) is equivalent to  parameter equation
	\begin{equation} \label{orthogonal_plane_para} \alpha\beta(-\dfrac{1}{2}+\dfrac{1}{3}\alpha+\dfrac{1}{4}\beta)+\dfrac{1}{54}=0,
	\end{equation}
	and the orthogonal condition on the  volume (\ref{orthogonal_space}) is equivalent to  parameter equation
	\begin{equation} \label{orthogonal_space_para}
		\alpha\beta\gamma(-1+\dfrac{1}{2}\alpha+\dfrac{3}{8}\beta+\dfrac{1}{3}\gamma)+\dfrac{1}{480}=0.
	\end{equation}
\end{lemma}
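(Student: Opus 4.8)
The plan is to reduce each orthogonal condition to the stated scalar equation by a direct but carefully organized computation on the reference element $\hat K$, exploiting linearity to test against a minimal generating set. First I would observe that both \eqref{orthogonal_plane} and \eqref{orthogonal_space} are bilinear in $(g,v)$, so it suffices to verify them for $g$ and $v$ ranging over a basis of $P^1$; using the volume/area coordinates $L_i$ as the natural affine basis reduces the verification to finitely many integrals of the form $\iint_{T_i} L_a\,(L_b-\Pi_\lambda^* L_b)\,\mathrm dS$ and $\iiint_K L_a\,(L_b-\Pi_\lambda^* L_b)\,\mathrm dx$. Since affine maps send $\hat K$ to an arbitrary $K$ and the condition is scale-invariant (the Jacobian factors out of both the integral and the vanishing right-hand side), it is enough to work on $\hat K$; moreover each quantity $v_1-\Pi_\lambda^* v_1$ is piecewise constant on the dual subdivision, so the surface and volume integrals become finite sums of (dual-cell area)$\times$(nodal defect) contributions.

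Next I would compute $\Pi_\lambda^* v$ explicitly for $v\in P^1$. Because $\Pi_\lambda^*$ is defined nodewise in Definition~\ref{definition:mapping}, and a linear function satisfies $v(M)=\tfrac12\big(v(P_M^1)+v(P_M^2)\big)$ exactly, the midpoint rule in the definition collapses: $(\Pi_\lambda^* v)(M)=\tfrac{1-\lambda}{2}\big(v(P_M^1)+v(P_M^2)\big)+\lambda v(M)=v(M)$ for affine $v$. Hence on vertices and midpoints the \emph{nodal values} of $\Pi_\lambda^* v$ coincide with those of $v$; the defect $v-\Pi_\lambda^* v$ arises purely because $\Pi_\lambda^* v$ is the piecewise-constant interpolant (constant equal to the node value on each dual cell $D^K_P$) while $v$ varies linearly across that cell. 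Thus the integrals reduce to geometric moments of the dual cells determined by the parameters $(\alpha,\beta,\gamma)$: on a face $T_i$ the relevant cells are bounded by the segments through $P^\alpha_{\cdot,\cdot}$, $P^\beta_{\cdot,\cdot}$ and $F_i$, while in the interior they also involve $Q^\gamma_i$. I would parametrize each dual region by $(\alpha,\beta)$ (for the face) or $(\alpha,\beta,\gamma)$ (for the volume), compute its area/volume and centroid, and assemble $\sum_P |D_P|\,\big(v(P)-\overline v|_{D_P}\big)$ against a test $g$.

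The main obstacle I expect is the bookkeeping of the dual-cell geometry: correctly identifying, for each fixed node $P$, the polygonal face-cell (respectively polyhedral volume-cell) cut out by the prescribed connecting segments, and evaluating its first moments as polynomials in $(\alpha,\beta,\gamma)$. The cubic structure of \eqref{orthogonal_space_para} versus the quadratic structure of \eqref{orthogonal_plane_para} already signals that the volume computation carries one extra parameter and one extra integration, so the polyhedral cell decomposition is the delicate part. I would handle this by writing each cell as a signed sum of simplices with vertices among $\{P_i, M_{jk}, F_l, Q^\gamma_i, P^\alpha, P^\beta\}$, using the standard simplex-moment formulas in volume coordinates, and summing. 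Once the moments are in hand, plugging them into $\iint_{T_i} g_1(v_1-\Pi_\lambda^* v_1)\,\mathrm dS$ and $\iiint_K g_2(v_2-\Pi_\lambda^* v_2)\,\mathrm dx$ and demanding they vanish for all $g_1,g_2$ yields a single scalar constraint in each case; by symmetry of $\hat K$ the independent-node contributions coincide, collapsing what looks like several equations into one, which I expect to match \eqref{orthogonal_plane_para} and \eqref{orthogonal_space_para} exactly after simplification.
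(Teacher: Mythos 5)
Your proposal follows essentially the same route as the paper's proof: reduce to the reference element, test against the affine coordinate basis $\{1,L_i\}$, use the fact that for affine $v$ the midpoint rule collapses so $\Pi_\lambda^*v=\Pi_1^*v$ (hence $\lambda$ disappears, exactly as in the paper's Remark after the lemma), and reduce everything to first moments of the dual cells — these are precisely the quantities $t_1,\dots,t_4$ in (\ref{Ints}) that the paper computes before assembling the single constraint $1/12-t_1-t_3=0$. The only caveats are minor: your claim that $v_1-\Pi_\lambda^*v_1$ is piecewise constant is wrong as stated (only $\Pi_\lambda^*v_1$ is, which your second paragraph implicitly corrects), and the collapse to one scalar equation is not purely a symmetry effect — the lower-order basis integrals such as $\iint_{\hat T_4}(1-\Pi_\lambda^*1)\,\mathrm dS$ and $\iint_{\hat T_4}(L_1-\Pi_\lambda^*L_1)\,\mathrm dS$ vanish identically for all $(\alpha,\beta)$ by computation, leaving only the diagonal quadratic moment to produce (\ref{orthogonal_plane_para}); your plan would discover this once the moments are actually evaluated.
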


\begin{proof}
Firstly, consider equation (\ref{orthogonal_plane}) on the reference element $\hat{K} $ (\ref{reference element}). It is equivalent to solve
%	\begin{equation*}
	%		\left\{
	%		\begin{split}
		%		&\iint_{\hat{T}_{4}}L^{n}_{1}(1-\Pi_{\lambda}^{*}1)\,\mathrm{d}L_{1}\mathrm{d}L_{2} =0,\quad n=0,1,\\
		%		&\iint_{\hat{T}_{4}}(L_{i}-\Pi_{\lambda}^{*}L_{i})\,\mathrm{d}L_{1}\mathrm{d}L_{2} =0,\quad i\in\mathcal{Z}_{2}^{(1)},
		%		\end{split}
	%		\right.
	%	\end{equation*}
\begin{equation}\label{equal:plane}
	\left\{
	\begin{split}
		&\iint_{\hat{T}_{4}}L_{1}^{n}(1-\Pi_{\lambda}^{*}1)\,\mathrm{d}L_{1}\mathrm{d}L_{2} =0, \\
		&\iint_{\hat{T}_{4}}L_{1}^{n}(L_{1}-\Pi_{\lambda}^{*}L_{1})\,\mathrm{d}L_{1}\mathrm{d}L_{2} =0,
	\end{split}
	\right.\qquad n=0,1.
\end{equation}
Since $ 1=\hat{\phi}_{1}+\hat{\phi}_{2}+\hat{\phi}_{3}+\hat{\phi}_{5}+\hat{\phi}_{6}+\hat{\phi}_{7} $ and $ L_{1}=\hat{\phi}_{1}+(\hat{\phi}_{6}+\hat{\phi}_{7})/2 $ holds on $ \hat{T}_{4} $, by Lemma~\ref{lemma:Pi}, it is easy to verify that $ \iint_{\hat{T}_{4}}(1-\Pi_{\lambda}^{*}1)\,\mathrm{d}L_{1}\mathrm{d}L_{2} =0 $ and $ \iint_{\hat{T}_{4}}(L_1-\Pi_{\lambda}^{*}L_1)\,\mathrm{d}L_{1}\mathrm{d}L_{2} =0 $.
In addition, we have the following integral results
\begin{equation}\label{Ints} 
	\left\{
	\begin{array}{ll}
		t_{_1}:=\iint_{\hat{T}_{4}}L_{1}\chi_{1}\,\mathrm{d}L_{1}\mathrm{d}L_{2}=\dfrac{\alpha\beta}{2}(1-\dfrac{\alpha+\beta}{3}),\\[2mm]
		t_{_2}:=\iint_{\hat{T}_{4}}L_{1}\chi_{2,3}\,\mathrm{d}L_{1}\mathrm{d}L_{2}=\dfrac{\alpha\beta(\alpha+\beta)}{12},\\[2mm]
		t_{_3}:=\iint_{\hat{T}_{4}}L_{1}\chi_{6,7}\,\mathrm{d}L_{1}\mathrm{d}L_{2}=\dfrac{2}{27}-\dfrac{\alpha\beta}{4}(1-\dfrac{\beta}{6}),\\[2mm]
		t_{_4}:=\iint_{\hat{T}_{4}}L_{1}\chi_{5}\,\mathrm{d}L_{1}\mathrm{d}L_{2}=\dfrac{1}{54}-\dfrac{\alpha\beta^2}{12},
	\end{array} 
	\right.
\end{equation}
where $ \chi_{i} $ is the local characteristic function of  dual element $ \hat{\mathit{K}}_{P_{i}}^{*}$, and $  \chi_{i,j}$ means $\chi_{i}  $ or $ \chi_{j} $.
Then, using above integral results yields   $ \iint_{\hat{T}_{4}}L_{1}(1-\Pi_{\lambda}^{*}1)\,\mathrm{d}L_{1}\mathrm{d}L_{2} =0 $, and
\begin{equation*}
	\iint_{\hat{T}_{4}}L_1(L_1-\Pi_{\lambda}^{*}L_1)\,\mathrm{d}L_{1}\mathrm{d}L_{2}=\iint_{\hat{T}_{4}}L^2_1\,\mathrm{d}L_{1}\mathrm{d}L_{2}-\iint_{\hat{T}_{4}}L_1(\chi_{1}+\dfrac{\chi_{6}}{2}+\dfrac{\chi_{7}}{2})\,\mathrm{d}L_{1}\mathrm{d}L_{2}=\frac{1}{12}-t_{_1}-t_{_3}.
\end{equation*}
Thus solving (\ref{equal:plane}) yields $ 1/12-t_{_1}-t_{_3}=0 $,
which leads to (\ref{orthogonal_plane_para}). Noticing  
\begin{equation*}
	t_{1}+2t_{2}+2t_{3}+t_{4}=\iint_{\hat{T}_{4}}L_1(\chi_{1}+\chi_{2}+\chi_{3}+\chi_{5}+\chi_{6}+\chi_{7})\,\mathrm{d}L_{1}\mathrm{d}L_{2}=\dfrac{1}{6},
\end{equation*}
parameter equation (\ref{orthogonal_plane_para}) can also be  written as
\begin{equation}\label{orthogonal_plane_para_t}
	-t_{_1}+2t_{_2}+t_{_4}=0.
\end{equation}

On the other hand, equation (\ref{orthogonal_space}) is equivalent to
\begin{equation*}
	\left\{
	\begin{split}
		&\iiint_{\hat{K}}L^{n}_{1}(1-\Pi_{\lambda}^{*}1)\,\mathrm{d}L_{1}\mathrm{d}L_{2}\mathrm{d}L_{3} =0, \\
		&\iiint_{\hat{K}}L^{n}_{1}(L_{1}-\Pi_{\lambda}^{*}L_{1})\,\mathrm{d}L_{1}\mathrm{d}L_{2}\mathrm{d}L_{3} =0,
	\end{split}
	\qquad n=0,1.
	\right.
\end{equation*}
Similarly, (\ref{orthogonal_space_para}) can be  derived. 
 \end{proof}

\begin{remark}
Note that $ \lambda $ doesn't appear in  parameter equations (\ref{orthogonal_plane_para}) and (\ref{orthogonal_space_para}). This means that the orthogonal conditions associated with $ \Pi_{\lambda}^{*} $ and $ \Pi_{1}^{*} $ lead to same  parameter equations. In fact, $ \Pi_{\lambda}^{*}L_{i}=\Pi_{1}^{*}L_{i}$  $(i\in{Z}_{4}^{(1)})$ for any given $\lambda\in \mathbb{R},\lambda\neq0 $.
\end{remark}
\begin{lemma}\label{solutions}
The parameter equations (\ref{orthogonal_plane_para}) and (\ref{orthogonal_space_para}) have infinite solutions. 
In addition, for any given $ \alpha $ in the range
\begin{equation}\label{range:alpha}
	\dfrac{1}{2}-\dfrac{\sqrt{6}}{6}\,(\approx 0.091752)<\alpha<\dfrac{1}{2},
\end{equation}
there is a unique solution for (\ref{orthogonal_plane_para}) and (\ref{orthogonal_space_para}) as follows
\begin{equation*}
	\left\{
	\begin{split}
		&\beta=(1-\dfrac{2}{3}\alpha)-\sqrt{(1-\dfrac{2}{3}\alpha)^{2}-\dfrac{2}{27\alpha}},\\[1mm]
		&\gamma=(\dfrac{3}{8}+\dfrac{1}{24\alpha\beta})-\sqrt{(\dfrac{3}{8}+\dfrac{1}{24\alpha\beta})^{2}-\dfrac{1}{160\alpha\beta}}. \\
	\end{split}
	\right.
\end{equation*}

\end{lemma}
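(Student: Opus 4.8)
The plan is to treat the two parameter equations as a triangular system: equation (\ref{orthogonal_plane_para}) is quadratic in $\beta$ once $\alpha$ is fixed, and equation (\ref{orthogonal_space_para}) is quadratic in $\gamma$ once $\alpha,\beta$ are fixed. I would solve them in this order, in each case using a sign-change (intermediate value) argument at the endpoints of the admissible interval to pin down existence, location, and uniqueness of the relevant root, rather than wrestling with the closed-form radicals directly. The range (\ref{range:alpha}) should emerge as exactly the set of $\alpha\in(0,1/2)$ for which the $\beta$-equation admits a root in $(0,2/3)$.

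First, for the surface equation I would write (\ref{orthogonal_plane_para}) as the monic quadratic $g(\beta):=\beta^{2}-2(1-\tfrac{2}{3}\alpha)\beta+\tfrac{2}{27\alpha}=0$, obtained by dividing by $\alpha/4$. Its two roots are precisely $(1-\tfrac23\alpha)\pm\sqrt{(1-\tfrac23\alpha)^{2}-\tfrac{2}{27\alpha}}$, so the stated $\beta$ is the smaller one. Since the product of the roots equals $\tfrac{2}{27\alpha}>0$ and their sum $2(1-\tfrac23\alpha)>0$, any real roots are positive. I would then evaluate $g$ at the endpoints: $g(0)=\tfrac{2}{27\alpha}>0$, while a short computation gives $g(\tfrac23)=-\tfrac{8}{9}(1-\alpha)+\tfrac{2}{27\alpha}$, whose sign is governed by $12\alpha^{2}-12\alpha+1$. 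The inequality $g(\tfrac23)<0$ is equivalent to $12\alpha^{2}-12\alpha+1<0$, i.e. $\tfrac12-\tfrac{\sqrt6}{6}<\alpha<\tfrac12+\tfrac{\sqrt6}{6}$; intersecting with $\alpha<\tfrac12$ yields exactly (\ref{range:alpha}). On this range $g(0)>0>g(\tfrac23)$ forces one root in $(0,2/3)$ and the other above $2/3$, so the admissible $\beta$ exists, lies in $(0,2/3)$, and is the unique (smaller) root, matching the formula.

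Next, with this $\beta$ fixed I would substitute into (\ref{orthogonal_space_para}) and clear to the monic quadratic in $\gamma$; the key simplification is that its linear coefficient, which a priori reads $-3+\tfrac32\alpha+\tfrac98\beta$, collapses upon invoking (\ref{orthogonal_plane_para}). Indeed (\ref{orthogonal_plane_para}) gives $\tfrac13\alpha+\tfrac14\beta=\tfrac12-\tfrac{1}{54\alpha\beta}$, whence $\tfrac32\alpha+\tfrac98\beta=\tfrac94-\tfrac{1}{12\alpha\beta}$ and the coefficient becomes $-2\big(\tfrac38+\tfrac{1}{24\alpha\beta}\big)$. The $\gamma$-equation therefore reads $f(\gamma):=\gamma^{2}-2\big(\tfrac38+\tfrac{1}{24\alpha\beta}\big)\gamma+\tfrac{1}{160\alpha\beta}=0$, whose smaller root is exactly the stated $\gamma$. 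Repeating the endpoint argument, $f(0)=\tfrac{1}{160\alpha\beta}>0$ and a direct evaluation gives $f(\tfrac34)=-\tfrac{9}{160\alpha\beta}<0$ unconditionally; together with positivity of the product and sum of the roots this shows $f$ has a unique root in $(0,3/4)$, namely the smaller one. In particular $\gamma$ imposes no further restriction on $\alpha$.

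Finally, since the admissible interval (\ref{range:alpha}) for $\alpha$ is nonempty and open, each of its (uncountably many) points yields a valid triple $(\alpha,\beta,\gamma)$, giving infinitely many solutions of the pair (\ref{orthogonal_plane_para})--(\ref{orthogonal_space_para}); this handles the first assertion as well. I expect the main obstacle to be the $\gamma$-step simplification, namely recognizing and justifying that (\ref{orthogonal_plane_para}) is exactly what turns the unwieldy linear coefficient $-3+\tfrac32\alpha+\tfrac98\beta$ into the clean $-2\big(\tfrac38+\tfrac1{24\alpha\beta}\big)$ that produces the closed form, together with keeping the inequality directions straight in the two endpoint-sign arguments, where the precise bound $\tfrac12-\tfrac{\sqrt6}{6}$ is produced by the factorization $12\alpha^{2}-12\alpha+1$.
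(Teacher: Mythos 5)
Your proposal is correct, and it shares the paper's skeleton: treat the system triangularly, solve (\ref{orthogonal_plane_para}) as a quadratic in $\beta$, use (\ref{orthogonal_plane_para}) to collapse the linear coefficient of the $\gamma$-quadratic from (\ref{orthogonal_space_para}) (your identity $\tfrac{3}{2}\alpha+\tfrac{9}{8}\beta=\tfrac{9}{4}-\tfrac{1}{12\alpha\beta}$ is precisely the simplification the paper performs), and select the smaller root in each case. Where you genuinely differ is in how the roots are located. The paper excludes the larger roots via sum-of-roots bounds ($\beta_1+\beta_2>4/3$ and $\gamma_1+\gamma_2>3/2$), obtains (\ref{range:alpha}) by ``a direct calculation'' of $0<\beta_1(\alpha)<2/3$, and then verifies $\gamma_1\in(0,3/4)$ by propagating numerical intervals: the range (\ref{range:ab}) for $\alpha\beta$ and from it the range (\ref{range:gamma}) for $\gamma$. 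You instead run endpoint sign-change arguments on the monic quadratics: $g(0)>0>g(2/3)$ characterizes (\ref{range:alpha}) through the factorization $12\alpha^2-12\alpha+1<0$, while $f(0)>0$ and $f(3/4)=-\tfrac{9}{160\alpha\beta}<0$ give existence and uniqueness of $\gamma\in(0,3/4)$ unconditionally, for any $\alpha\beta>0$. This buys a real simplification: your $\gamma$-step requires no interval arithmetic and makes transparent that $\gamma$ imposes no further restriction on $\alpha$, a fact the paper only exhibits a posteriori through the computed range (\ref{range:gamma}); the paper's route, in exchange, produces the explicit numerical ranges of $\beta$, $\alpha\beta$ and $\gamma$ that it records for later use. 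One small tightening: your side remark that (\ref{range:alpha}) is \emph{exactly} the set of admissible $\alpha$ needs the observation that the root sum $2(1-\tfrac{2}{3}\alpha)>4/3$ forbids both roots from lying in $(0,2/3)$ when $g(2/3)\geq 0$; you state this sum but use it only for positivity. This is inessential to the lemma as stated, which only asserts existence and uniqueness for $\alpha$ inside the range.
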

%For each reasonable solution,  parameters $ (\alpha,\beta,\gamma )$ are respectively in the following ranges
%\begin{align}
%\dfrac{2}{3}-\dfrac{2\sqrt{6}}{9}\,(\approx 0.122336)<&\beta<\dfrac{2}{3},\label{range:beta}\\
%\dfrac{30+5\sqrt{6}-\sqrt{960+270\sqrt{6}}}{40}\,(\approx 0.049533)<&\gamma\leq 0.052908895445995.\label{range:gamma}
%\end{align}
%And for any one of parameters $ (\alpha,\beta,\gamma )$  given in above ranges, the other two parameters can be obtained uniquely from  (\ref{orthogonal_plane_para}) and (\ref{orthogonal_space_para}).
\begin{proof}
We start by analyzing parameter equation (\ref{orthogonal_plane_para}) for $ \alpha\in(0,1/2)$ and $\beta\in(0,2/3) $ separately.
Considering (\ref{orthogonal_plane_para}) as an equation for $ \beta $, the two roots $ \beta_{1},\beta_{2}$  $(\beta_{1}\leq\beta_{2}) $ satisfy $ \beta_{1}+\beta_{2}=2-(4\alpha)/3>4/3 $. Thus, the reasonable root is
\begin{equation} \label{equation:beta}
	\beta=\beta_{1}(\alpha)=\dfrac{(\tfrac{1}{2}\alpha-\tfrac{1}{3}\alpha^{2})-\sqrt{(\tfrac{1}{2}\alpha-\tfrac{1}{3}\alpha^{2})^{2}-\tfrac{1}{54}\alpha}}{\tfrac{1}{2}\alpha}.
\end{equation}
Since $ 0<\beta_{1}(\alpha)<2/3 $, a direct calculation of this inequality about $ \alpha $ yields the range  (\ref{range:alpha}) of $ \alpha$.   Similarly, considering (\ref{orthogonal_plane_para}) as an equation for $ \alpha $, we have 
\begin{equation}\label{range:beta}
	\dfrac{2}{3}-\dfrac{2\sqrt{6}}{9}\,(\approx 0.122336)<\beta<\dfrac{2}{3}.
\end{equation}
Acctually, every reasonable solution of (\ref{orthogonal_plane_para}) exactly meets (\ref{range:alpha}) and (\ref{range:beta}). 
Moreover, equation (\ref{equation:beta}) indicates that $ \alpha\beta $ can be represented by  $ \alpha $ as follows
\begin{equation*}
	\alpha\beta=(\alpha-\dfrac{2}{3}\alpha^{2})-\sqrt{(\alpha-\dfrac{2}{3}\alpha^{2})^{2}-\dfrac{2}{27}\alpha}.
\end{equation*}
By (\ref{range:alpha}), we compute to arrive at the range of $ \alpha\beta $ that
\begin{equation}\label{range:ab}
	0.049913491374002\leq\alpha\beta<\dfrac{1}{3}-\dfrac{\sqrt{6}}{9}\,(\approx 0.061168).
\end{equation}

Then, considering parameter equation (\ref{orthogonal_space_para}) as an equation for $ \gamma\in(0,3/4) $, the two roots $ \gamma_{1},\gamma_{2}$  $(\gamma_{1}\leq\gamma_{2}) $ satisfy $ \gamma_{1}+\gamma_{2}=3-(3\alpha)/2-(9\beta)/8>3/2$. Using (\ref{orthogonal_plane_para}) to simplify (\ref{orthogonal_space_para}), we obtain
\begin{equation*}
	(\frac{1}{3}\alpha\beta)\gamma^{2}-(\frac{1}{4}\alpha\beta+\frac{1}{36})\gamma+\dfrac{1}{480}=0.
\end{equation*}
Thus, the reasonable root is
\begin{equation}\label{equation:gamma}
	\gamma=\gamma_{1}(\alpha\beta)=\dfrac{(\tfrac{1}{4}\alpha\beta+\tfrac{1}{36})-\sqrt{(\tfrac{1}{4}\alpha\beta+\tfrac{1}{36})^{2}-\tfrac{1}{360}\alpha\beta}}{\tfrac{2}{3}\alpha\beta}.
\end{equation}
Combining the range (\ref{range:ab}) of $ \alpha\beta $, we have
\begin{equation}\label{range:gamma}
	\dfrac{30+5\sqrt{6}-\sqrt{960+270\sqrt{6}}}{40}\,(\approx 0.049533)<\gamma\leq 0.052908895445995,
\end{equation}
which lies in $ (0,3/4) $.

Above discussions show that for any given $ \alpha $ in  (\ref{range:alpha}), parameters $ \beta $ (\ref{equation:beta}) and $ \gamma $ (\ref{equation:gamma}) are uniquely obtained from equations (\ref{orthogonal_plane_para}) and (\ref{orthogonal_space_para}).  Simplifying (\ref{equation:beta}) and (\ref{equation:gamma}) completes the proof. 
 \end{proof}
\begin{remark}
The quadratic FVM schemes satisfying the orthogonal conditions on tetrahedral meshes are infinite, this is different from the case of the unique one on  triangular meshes \cite{WangX2016}.
\end{remark}

\section{Stability analysis}\label{Section:3}
This section is devoted to analysis for stability of the quadratic FVM schemes  (\ref{eq:FVM}). Assume the  diffusion coefficient $ \kappa=1 $.
Our goal is to prove the  local stability that for any $ K \in  \mathcal{T}_{h} $,
\begin{equation}\label{eq:constant_element ellipitic}
a_{h}^{K}(u_{h},\Pi_{\lambda}^{*}u_{h}):=a_{h}(u_{h},\Pi_{\lambda}^{*}u_{h})\big|_{K}\gtrsim |u_{h}|_{1,K}^{2}   \qquad  \forall  u_{h}\in \mathit{U}_{h}.
\end{equation}

%In Subsection~\ref{form and reduce}, the local stability (\ref{eq:constant_element ellipitic}) is converted to the positive definiteness of a $ 9\times9 $ symbolic matrix based on a equivalent discrete  norm. In Subsection~\ref{subsection:stability}, we assume that the orthogonal condition of surface (\ref{orthogonal_plane_para_t}) which is a restriction on $ \mathcal{T}_{h}^{*} $ holds. then we accomplish two tasks: i) for regular tetrahedral elememts, the $ 9\times9 $  matrix is positive definite for given parameter $ \lambda $ in a certain range; ii) for general tetrahedral elememts,  the congruent transformation of the $ 9\times9 $  matrix can be reduced to a block diagonal matrix  containing a $ _{3\!\times\!3} $ matrix and  a $ 6\times6 $ matrix, where the $ _{3\!\times\!3} $ matrix is proved to be  unconditionally positive definite. Finally, in Subsection~\ref{subsection:restriction3}, we find that for fixed parameters $ (\alpha,\beta,\gamma,\lambda) $, the $ 6\times6 $ symbolic matrix only relies on five certain plane angles of the tetrahedral element. Then the minimum V-angle condition (\ref{restriction_3}) (a key restriction on $ \mathcal{T}_{h} $) is proposed to ensure the positive definiteness of the $ 6\times6 $ matrix. Based on the two restrictions, we give the stability.
In Subsection~\ref{form and reduce}, the local stability (\ref{eq:constant_element ellipitic}) is converted to a positive definiteness of a $ 9\times9 $ symbolic matrix based on an equivalent discrete  norm. In Subsection~\ref{subsection:stability}, under the orthogonal condition on the surface (\ref{orthogonal_plane_para_t}), firstly, for any regular tetrahedron $ K \in  \mathcal{T}_{h} $, it is proved that the $ 9\times9 $ symbolic matrix is positive definite for given parameter $ \lambda $ in a certain range; secondly, for any general tetrahedron $ K \in  \mathcal{T}_{h} $, by the congruent transformation, the $ 9\times9 $  matrix is reduced to a block diagonal matrix  containing a $ 3\times3 $ matrix and  a $ 6\times6 $ matrix, where the $ 3\times3 $ matrix is proved to be  unconditionally positive definite. In Subsection~\ref{subsection:restriction3}, we derive that for fixed parameters $ (\alpha,\beta,\gamma,\lambda) $, the $ 6\times6 $ symbolic matrix only relies on five certain plane angles of a tetrahedral element. Then, the minimum V-angle condition (\ref{restriction_3}) is proposed to ensure the positive definiteness of the $ 6\times6 $ matrix numerically. Under the two restrictions (\ref{orthogonal_plane_para_t}) and (\ref{restriction_3}), the stability is presented in the end of this section.
\subsection{The element matrices}\label{form and reduce}
The $ 10\times10 $ element stiffness matrix and an equivalent discrete  norm  of  $ |u_{h}|_{1,K}^{2} $ is presented. Then, the local stability (\ref{eq:constant_element ellipitic}) is  converted to a positive definiteness of a $ 9\times9 $ symbolic matrix.

\vspace{2mm}
According to equation (\ref{def:bilinear form}), we have the  bilinear form on any element  $ K \in  \mathcal{T}_{h} $ that
\begin{equation*}
a_{h}^{K}(u_{h},\Pi_{\lambda}^{*}u_{h})=-\!\sum\limits_{K^{*}\in\mathcal{T}_{h}^{*}}  {\iint_{\partial K^{*}\cap K} \nabla u_{h}\cdot \textbf{n}\, \Pi_{\lambda}^{*}u_{h}\,\mathrm{d} S}.
\end{equation*}
By Lemma~\ref{lemma:Pi}, we have
\begin{equation*}
a_{h}^{K}(u_{h},\Pi_{\lambda}^{*}u_{h})=a_{h}^{K}(\textbf{u}^{T}_{K}\Phi,\Pi_{\lambda}^{*}(\textbf{u}^{T}_{K}\Phi))=\textbf{u}^{T}_{K}\mathbb{A}_{K,\lambda}\textbf{u}_{K},
\end{equation*}
where the element stiffness matrix $  \mathbb{A}_{K,\lambda}=\big(a_{mn,\lambda}\big) _{m,n\in\mathcal{Z}_{10}^{(1)}}$ with $ a_{mn,\lambda}=a_{h}^{K}(\phi_{n},\Pi_{\lambda}^{*}\phi_{m}) $, and
\begin{equation}\label{A_Kp}
\mathbb{A}_{K,\lambda}=\textbf{S}\mathbb{A}_{K,1},
\end{equation}
where the element matrix
$ \mathbb{A}_{K,1}=\left(  a_{mn}^{1} \right) _{m,n\in\mathcal{Z}_{10}^{(1)}}  $ with
$ a_{mn}^{1}\!=\!a_{h}^{K}(\phi_{n},\chi_{m})\!=\!-\iint_{ \partial K_{P_{m}}^{*}\cap  K} \nabla \phi_{n}\cdot \textbf{n}\,\mathrm{d} S $.

By the Green's formula, one gets
\begin{equation}\label{equation:a_hK}
-\!\iint_{ \partial K_{P_{m}}^{*}\cap  K}\!\!\!\nabla \phi_{n}\!\cdot \textbf{n}\,\mathrm{d} S=\!\iint_{  \partial K\cap K_{P_{m}}^{*}} \!\!\!\!\!\!\!\!\nabla \phi_{n}\!\cdot \textbf{n}\,\mathrm{d} S\!-\!\!\iiint_{K_{P_{m}}^{*}\!\!\cap K} \!\!\!\Delta\phi_{n}\,\mathrm{d}x_{1}\mathrm{d}x_{2}\mathrm{d}x_{3}\quad \forall m,n\in\mathcal{Z}_{10}^{(1)}.
\end{equation}
Since $ \Delta\phi_{n} $ $ (n\in\mathcal{Z}_{10}^{(1)}) $ are constants, we split the element matrix $ \mathbb{A}_{K,1} $  into two parts
\begin{equation}\label{A_k1}
\mathbb{A}_{K,1}=\textbf{A}-\,\textbf{v}_{1}\textbf{v}_{2}^T,
\end{equation}
where  $ \textbf{A} $ is a $ 10\times10 $ matrix, and $\textbf{v}_{1},\textbf{v}_{2}  $ are two column vectors. They are given by
\begin{gather*}
\textbf{A}=\left( \iint_{  \partial K\cap K_{P_{m}}^{*}} \nabla \phi_{n}\cdot \textbf{n}\,\mathrm{d} S \right)  _{m,n\in\mathcal{Z}_{10}^{(1)}},\\
\textbf{v}_{1}=
{\tiny
	\left(  \dfrac{1}{6|K|}\iiint_{K_{P_{m}}^{*}\cap K} 1\,\mathrm{d}x_{1}\mathrm{d}x_{2}\mathrm{d}x_{3} \right)  _{m\in\mathcal{Z}_{10}^{(1)}}},\quad
\textbf{v}_{2}=
\big(  6|K|\,\Delta\phi_{n} \big)  _{n\in\mathcal{Z}_{10}^{(1)}}.
\end{gather*}
By definition (\ref{parameters}) of parameters $ (\alpha,\beta,\gamma )$,  we have explicit form of $ \textbf{v}_{1} $ as follows
\begin{equation*}
\textbf{v}_{1}=\big(\tfrac{\alpha\beta\gamma}{6},\tfrac{\alpha\beta\gamma}{6},\tfrac{\alpha\beta\gamma}{6},\tfrac{\alpha\beta\gamma}{6},\tfrac{1-4\alpha\beta\gamma}{36}, \tfrac{1-4\alpha\beta\gamma}{36},\tfrac{1-4\alpha\beta\gamma}{36},\tfrac{1-4\alpha\beta\gamma}{36}, \tfrac{1-4\alpha\beta\gamma}{36},\tfrac{1-4\alpha\beta\gamma}{36}\big)^T.
\end{equation*}
By Lemma~\ref{lemma:relation Li} and the expressions of  ten local quadratic Lagrange  basis functions
\begin{equation}\label{basis functions}
\phi_{P_{i}}=L_{i}(2L_{i}-1)\quad \forall i\in\mathcal{Z}_{4}^{(1)},\qquad\phi_{M_{jk}}=4L_{j}L_{k}\quad \forall (j,k)\in\mathcal{Z}_{4}^{(2)},
\end{equation} 
%$ \phi_{P_{i}}=L_{i}(2L_{i}-1)$  $ (i\in\mathcal{Z}_{4}^{(1)})$ and $
%\phi_{M_{jk}}=4L_{j}L_{k}$  $ \big((j,k)\in\mathcal{Z}_{4}^{(2)}\big)$. 
we have explicit form of $ \textbf{v}_{2} $ as follows
\begin{equation*}
\textbf{v}_{2}=4(R_{_1},\,R_{_2},\,R_{_3},\,R_{_4},\,- 2r_{_{\!14}},- 2r_{_{\!24}}, - 2r_{_{\!34}}, - 2r_{23} , - 2r_{_{\!13}},- 2r_{_{\!12}} )^{T}.
\end{equation*}
From (\ref{basis functions}) and $ L_{1}+L_{2}+L_{3}+L_{4}=1 $, we have $ \nabla \phi_{n}=\sum_{i_1,i_2\in\mathcal{Z}_{4}^{(1)}}c_{i_1,i_2}L_{i_{1}}\nabla L_{i_{2}}$ with some constants $ c_{i_1,i_2} $. In view of this, Lemma~\ref{Lemma:integral results} presents some important equations for deriving $ \textbf{A} $.
\begin{lemma}\label{Lemma:integral results}
For the vertices $ P_{i} $ $ (i\in\mathcal{Z}_{4}^{(1)}) $ in element $ K $ and $ L_{i_{1}}\nabla L_{i_{2}} $ $ (i_{1},i_{2}\in\mathcal{Z}_{4}^{(1)}) $, we have the following integral results
\begin{equation*}
	\iint_{  \partial K \cap K_{P_{i}}^{*}}\!\! L_{i_{1}}\!\nabla L_{i_{2}}\!\cdot\!\textbf{n}\,\mathrm{d} S=
	\left\{
	\begin{array}{ll}
		6|K|\left( \,\nabla L_{i}\!\cdot\!\nabla L_{i_{2}}\,\right) t_{1},\qquad\quad &i_{1}=i,\\[0.3cm]
		6|K|\left( \sum\limits_{l\in\left\lbrace i,i_{1} \right\rbrace}\nabla L_{l}\!\cdot\!\nabla L_{i_{2}}\right) t_{2},\qquad\quad &i_{1}\neq i.
	\end{array} 
	\right.
\end{equation*}
For the midpoints $ M_{jk} $ $ \big((j,k)\in\mathcal{Z}_{4}^{(1)}\big) $ in element $ K $ and $ L_{i_{1}}\nabla L_{i_{2}} $ $ (i_{1},i_{2}\in\mathcal{Z}_{4}^{(1)}) $, we have the following integral results
\begin{equation*}
	\iint_{\partial K \cap  K_{M_{jk}}^{*}} \!\!L_{i_{1}}\!\nabla L_{i_{2}}\!\cdot\! \textbf{n}\,\mathrm{d} S=
	\left\{
	\begin{array}{lll}
		6|K|\left( \sum\limits_{l\in\left\lbrace j,k \right\rbrace}\nabla L_{l}\!\cdot\!\nabla L_{i_{2}}\right) t_{3},\qquad\quad &i_{1}\in\left\lbrace j,k \right\rbrace,\\[0.42cm]
		6|K|\left( \sum\limits_{l\in\left\lbrace j,k,i_{1} \right\rbrace}\nabla L_{l}\!\cdot\!\nabla L_{i_{2}}\right) t_{4},\qquad\quad &i_{1}\notin\left\lbrace j,k \right\rbrace.
	\end{array} 
	\right.
\end{equation*}
Here  $t_{i},i\in\mathcal{Z}_{4}^{(1)}$ are constants given by (\ref{Ints}).
\end{lemma}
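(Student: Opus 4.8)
The plan is to reduce each surface integral over $\partial K\cap K_P^{*}$ to the reference‑face integrals already recorded as $t_1,\dots,t_4$ in \eqref{Ints}, exploiting that every $\nabla L_{i_2}$ is a constant vector and that $\partial K\cap K_P^{*}$ decomposes into dual sub‑regions lying on the faces $T_l$ of $K$.

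First, I would fix one face $T_l\subset\partial K$. Because $\nabla L_{i_2}$ is constant by \eqref{eq:nabda_L} and $\textbf{n}$ equals the constant outer normal $\textbf{n}_l$ on $T_l$, the integrand factorizes, and invoking $\nabla L_{l}=-\tfrac{|T_l|}{3|K|}\textbf{n}_l$ gives
\[
\iint_{T_l\cap K_P^{*}}L_{i_1}\nabla L_{i_2}\cdot\textbf{n}\,\mathrm{d}S
=(\nabla L_{i_2}\cdot\textbf{n}_l)\iint_{T_l\cap K_P^{*}}L_{i_1}\,\mathrm{d}S
=-\frac{3|K|}{|T_l|}\,(\nabla L_{i_2}\cdot\nabla L_l)\iint_{T_l\cap K_P^{*}}L_{i_1}\,\mathrm{d}S .
\]

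Second, I would evaluate the scalar area integral by pulling $T_l$ back to the reference triangle. The dual partition restricted to any face $T_l$ is precisely the planar dual mesh of that triangle fixed by $\alpha,\beta$, hence the affine image of the configuration on $\hat K$ in \eqref{reference element}; since the standard triangle has area $\tfrac12$, the affine Jacobian gives $\mathrm{d}S=2|T_l|\,\mathrm{d}L_1\mathrm{d}L_2$. Thus $\iint_{T_l\cap K_P^{*}}L_{i_1}\,\mathrm{d}S=2|T_l|\,I$, where $I\in\{t_1,t_2,t_3,t_4\}$ is selected by the $S_3$ symmetry of the reference face: $t_1$ when $L_{i_1}$ is the coordinate of the home vertex $P=P_i$; $t_2$ when $P$ is a vertex and $i_1$ indexes another vertex of $T_l$; $t_3$ when $P$ is a midpoint and $i_1$ is one of its endpoints; $t_4$ when $P$ is a midpoint and $i_1$ is the opposite vertex. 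The factor $|T_l|$ cancels, leaving $-6|K|\,(\nabla L_{i_2}\cdot\nabla L_l)\,I$ from each face.

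Third, I would sum over the faces that actually meet $K_P^{*}$, dropping any face $T_{i_1}$ on which $L_{i_1}\equiv0$, and then recast the resulting sum of gradient products into the compact form of the statement through $\sum_{l=1}^{4}\nabla L_l=0$. For instance, for a vertex $P_i$ with $i_1=i$ the three faces $l\neq i$ all contribute $t_1$, and $\sum_{l\neq i}\nabla L_l=-\nabla L_i$ converts $-6|K|\,t_1\sum_{l\neq i}(\nabla L_{i_2}\cdot\nabla L_l)$ into $6|K|\,(\nabla L_i\cdot\nabla L_{i_2})\,t_1$; the remaining three cases follow the same way once the vanishing faces are removed. I expect the real work to be this combinatorial and sign bookkeeping rather than any individual integral: for each pair (home node, coordinate index $i_1$) one must pin down which faces contribute, which constant $t_i$ governs each, and verify that the repeated sign reversal induced by $\sum_l\nabla L_l=0$ reproduces exactly the gradient sums $\sum_{l\in\{\cdots\}}\nabla L_l\cdot\nabla L_{i_2}$ in all four cases. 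The affine‑equivalence argument giving $\mathrm{d}S=2|T_l|\,\mathrm{d}L_1\mathrm{d}L_2$ and the evaluation of the $t_i$ themselves are routine.
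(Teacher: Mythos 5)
Your proposal is correct and follows essentially the same route as the paper's proof: both decompose $\partial K\cap K_{P}^{*}$ into its intersections with the faces $T_{l}$, replace the constant normal via $\textbf{n}_{l}=-\tfrac{3|K|}{|T_{l}|}\nabla L_{l}$ from (\ref{eq:nabda_L}), pull each face integral back to the reference triangle (the Jacobian $2|T_{l}|$ cancelling the $1/|T_{l}|$) to identify it with one of $t_{1},\dots,t_{4}$ in (\ref{Ints}), and finish with $L_{i_{1}}\equiv 0$ on $T_{i_{1}}$ together with $\nabla(L_{1}+L_{2}+L_{3}+L_{4})=0$. Your case-by-case bookkeeping, including the symmetry rule selecting which $t_{i}$ governs each face, matches the paper's argument exactly.
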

\begin{proof}
Noticing $ \partial K\cap K_{P_{i_{3}}}^{*}=\cup_{i_4\in\mathcal{Z}_{4}^{(1)}}\big(T_{i_4}\cap K_{P_{i_{3}}}^{*}\big)$  $ (i_{3}\in\mathcal{Z}_{10}^{(1)}) $, we have
\begin{equation*}
	\iint_{\partial K\cap K_{P_{i_{3}}}^{*}}\!\!\!L_{i_{1}}\!\nabla L_{i_{2}}\!\cdot\! \textbf{n}\,\mathrm{d} S=\sum_{i_4\in\mathcal{Z}_{4}^{(1)}}\iint_{T_{i_{4}}}\!\!L_{i_{1}}\chi_{i_{3}}\nabla L_{i_{2}}\cdot \textbf{n}_{i_{4}}\,\mathrm{d} S \qquad \forall i_{3}\in\mathcal{Z}_{10}^{(1)},
\end{equation*}
where $ \chi_{i_3} $ is the local characteristic function of  dual element $ \mathit{K}_{P_{i_3}}^{*}$.
%	We organize the result (\ref{eq:nabda_L}) of $\nabla L_{i} $ as $ \textbf{n}_{i}=-\dfrac{6|K|}{2|T_{i}|}\nabla L_{i}$ and obtain
By (\ref{eq:nabda_L}), replacing vector $ \textbf{n}_{i_{4}} $ by $-(6|K|)/(2|T_{i_{4}}|)\nabla L_{i_{4}}$ yields
\begin{equation*}
	\begin{split}
		\sum_{i_4\in\mathcal{Z}_{4}^{(1)}}\iint_{T_{i_{4}}} L_{i_{1}}\chi_{i_{3}}\nabla L_{i_{2}}\cdot \textbf{n}_{i_{4}}\,\mathrm{d} S&=-6|K|\sum_{i_4\in\mathcal{Z}_{4}^{(1)}}\iint_{T_{i_{4}}} \dfrac{1}{2|T_{i_{4}}|}\,L_{i_{1}}\chi_{i_{3}}\, \nabla L_{i_{4}}\cdot\nabla L_{i_{2}}\,\mathrm{d} S\\
		&=-6|K|(\nabla L_{i_{4}}\cdot\nabla L_{i_{2}})\sum_{i_4\in\mathcal{Z}_{4}^{(1)}}\iint_{\hat{T}_{i_{4}}} L_{i_{1}}\chi_{i_{3}}\,\mathrm{d} \hat{S}.
	\end{split}
\end{equation*}
Combining the facts that $ L_{i}=0 $ holds on $ T_{i} $ $ (i\in\mathcal{Z}_{4}^{(1)}) $, $ \nabla(L_{1}+L_{2} +L_{3} +L_{4}) =0 $, and the integral results (\ref{Ints}), one can reach the conclusion of Lemma~\ref{Lemma:integral results}.
 \end{proof}
By Lemma~\ref{Lemma:integral results}, taking $ \textbf{A}(1,1) $ as an example, we have
\begin{align*}
\textbf{A}(1,1)&=\iint_{  \partial K\cap K_{P_{1}}^{*}}\!\!\!\! \nabla \phi_{P_{1}}\cdot \textbf{n}\,\mathrm{d} S=\iint_{  \partial K\cap K_{P_{1}}^{*}} \!\!\!\!3L_{1}\nabla L_{1}\cdot \textbf{n}\,\mathrm{d} S-\iint_{  \partial K\cap K_{P_{1}}^{*}} \!\!\!\!(L_{2}+L_{3}+L_{4})\nabla L_{1}\cdot \textbf{n}\,\mathrm{d} S\\
&=6|K|(\nabla L_{1}\!\!\cdot\!\nabla L_{1})3t_{1}-6|K|(3\nabla L_{1}+\nabla L_{2}+\nabla L_{3}+\nabla L_{4})\!\cdot\!\nabla L_{1}t_{2}=(3t_1-2t_2)R_{1}.
\end{align*}
Other entries are derived similarly,  and the explicit form of $ \textbf{A} $ is put in Appendix A.1.

\vspace{2mm}
Then, consider $ |u_{h}|_{1,K}^{2} $ with $ u_{h}\vert_{K}=\sum_{i\in\mathcal{Z}_{10}^{(1)}}u_{i}\phi_{P_{i}}$ and $ u_{i}=u_{h}(P_{i}) $ $(i\in\mathcal{Z}_{10}) $. By (\ref{basis functions}) and  $ L_{4}=1\!-\!L_{1}\!-\!L_{2}\!-\!L_{3} $, we have
\begin{align}
u_{h}\vert_{K}=&u_{4}+L_{1}(-u_{1}-3u_{4}+4u_{8})+L_{2}(-u_{2}-3u_{4}+4u_{9})+L_{3}(-u_{3}-3u_{4}+4u_{10})\nonumber\\
&+2L^{2}_{1}(u_{1}+u_{4}-2u_{8})+2L^{2}_{2}(u_{2}+u_{4}-2u_{9})+2L^{2}_{3}(u_{3}+u_{4}-2u_{10})\nonumber\\
&+4L_{2}L_{3}(u_{4}+u_{5}-u_{9}-u_{10})+4L_{1}L_{3}(u_{4}+u_{6}-u_{8}-u_{10})+4L_{1}L_{2}(u_{4}+u_{7}-u_{8}-u_{9})\nonumber\\
=&u_{4}+\big(L_{1},L_{2},L_{3},L^{2}_{1},L^{2}_{2},L^{2}_{3},L_{2}L_{3},L_{1}L_{3},L_{1}L_{2}\big)\textbf{G}\textbf{u}_{K},\label{u_K}
\end{align}
where $ \textbf{u}_{K}=\big(u_{1},u_{2},...,u_{10}\big)^T $, and $ \textbf{G} $ is a $ 9\times 10 $ matrix given by
\begin{equation}\label{G}
\textbf{G}=
{\scriptsize
	\left( 
	\begin{array}{rrrrrrrrrr}
		-1&0&0&-3&0&0&0&4&0&0\\
		0&-1&0&-3&0&0&0&0&4&0\\
		0&0&-1&-3&0&0&0&0&0&4\\
		2&0&0&2&0&0&0&-4&0&0\\
		0&2&0&2&0&0&0&0&-4&0\\
		0&0&2&2&0&0&0&0&0&-4\\
		0&0&0&4&4&0&0&0&-4&-4\\
		0&0&0&4&0&4&0&-4&0&-4\\
		0&0&0&4&0&0&4&-4&-4&0\\
	\end{array}
	\right) 
}
\end{equation}
Note that the constant term of $ u_{h}\vert_{K} $ vanishes in the derivative of $ u_{h}\vert_{K} $. In view of this, Lemma~\ref{Lemma:Descrete equivalent norm} presents an  equivalent discrete norm of $ |u_{h}|^{2}_{1,K} $ associated with $ \textbf{G} $.
\begin{lemma}[Discrete norm]\label{Lemma:Descrete equivalent norm}
If $ \mathcal{T}_{h} $ is a regular partition (\ref{shape-regular}), 
then for each $ K \in  \mathcal{T}_{h}$, we have
\begin{equation}\label{equi_norm}
	|u_{h}|_{1,K}^{2}\sim h_{K} \|\textbf{G}\textbf{u}_{K}\|^{2}, 
	%	\quad \forall  u_{h}\in \mathit{U}_{h},
\end{equation}
where $ \|\cdot\| $ is the Euclidean norm, and $ \textbf{G} $ is given by (\ref{G}). 
\end{lemma}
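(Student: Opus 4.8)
The plan is to establish the norm equivalence $|u_h|_{1,K}^2 \sim h_K\|\mathbf{G}\mathbf{u}_K\|^2$ by exploiting the affine map from the reference element $\hat K$ and the shape-regularity assumption \eqref{shape-regular}. First I would pull everything back to $\hat K$. From \eqref{u_K} we see that $u_h|_K$, modulo its constant term $u_4$ (which is annihilated by the gradient), is the polynomial
\begin{equation*}
\big(L_1,L_2,L_3,L_1^2,L_2^2,L_3^2,L_2L_3,L_1L_3,L_1L_2\big)\mathbf{G}\mathbf{u}_K,
\end{equation*}
where $(L_1,L_2,L_3)$ serve as the coordinates on $\hat K$ via the relations \eqref{eq:relation-coordinates}. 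Writing $\mathbf{c}=\mathbf{G}\mathbf{u}_K\in\mathbb{R}^9$ for the coefficient vector, the seminorm $|u_h|_{1,K}^2$ is a quadratic form in $\mathbf{c}$; the whole point is to show this quadratic form is comparable, up to the factor $h_K$, to $\|\mathbf{c}\|^2$.

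Second, I would carry out the standard affine scaling argument. On the fixed reference element $\hat K$, the map $\mathbf{c}\mapsto \hat u$ sending the coefficient vector to the corresponding element of $\mathrm{span}\{L_1,L_2,L_3,L_1^2,L_2^2,L_3^2,L_2L_3,L_1L_3,L_1L_2\}$ is a linear isomorphism onto a finite-dimensional space (the nine listed monomials are linearly independent on $\hat K$). Hence on $\hat K$ the two expressions $|\hat u|_{1,\hat K}^2$ and $\|\mathbf{c}\|^2$ are equivalent norms on $\mathbb{R}^9$, with equivalence constants that are absolute since $\hat K$ is fixed:
\begin{equation*}
|\hat u|_{1,\hat K}^2 \sim \|\mathbf{c}\|^2.
\end{equation*}
It remains to transfer this to the physical element $K$. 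Under the affine change of variables from $\hat K$ to $K$ with Jacobian $B_K$, one has the classical scaling estimates $|u_h|_{1,K}\lesssim \|B_K^{-1}\|\,|\det B_K|^{1/2}\,|\hat u|_{1,\hat K}$ and the reverse bound with $\|B_K\|$ in place of $\|B_K^{-1}\|$. By shape-regularity \eqref{shape-regular}, $\|B_K\|\sim h_K$, $\|B_K^{-1}\|\sim h_K^{-1}$, and $|\det B_K|\sim |K|\sim h_K^3$, so the two-sided estimate collapses to
\begin{equation*}
|u_h|_{1,K}^2 \sim |\det B_K|\,\|B_K^{-1}\|^2\,|\hat u|_{1,\hat K}^2 \sim h_K^{3}\cdot h_K^{-2}\cdot\|\mathbf{c}\|^2 = h_K\,\|\mathbf{G}\mathbf{u}_K\|^2,
\end{equation*}
which is exactly \eqref{equi_norm}.

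The main obstacle, and the only place requiring genuine care, is making the finite-dimensional equivalence $|\hat u|_{1,\hat K}^2\sim\|\mathbf{c}\|^2$ rigorous rather than merely asserting it: one must verify that the quadratic form $\mathbf{c}\mapsto|\hat u|_{1,\hat K}^2$ is actually \emph{positive definite} on $\mathbb{R}^9$, i.e. that no nonzero coefficient vector produces a constant function on $\hat K$. This follows because a gradient vanishing on $\hat K$ forces $\hat u$ to be constant, but the listed monomials contain no nonzero constant combination other than the trivial one once the constant term has been removed; equivalently, $\mathbf{G}$ was constructed precisely so that $\mathbf{G}\mathbf{u}_K$ records only the non-constant part of $u_h|_K$. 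Granting this, the equivalence constants on $\hat K$ are the smallest and largest eigenvalues of the associated symmetric positive-definite matrix, both absolute constants, and the tracking of powers of $h_K$ through the Jacobian scaling is routine.
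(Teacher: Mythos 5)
Your proposal is correct, and its skeleton coincides with the paper's proof: pull $u_h$ back to the reference element $\hat K$ through the barycentric coordinates, establish $|\hat u_h|^2_{1,\hat K}\sim\|\textbf{G}\textbf{u}_K\|^2$ there, and recover the factor $h_K$ from shape regularity (\ref{shape-regular}) (the paper simply cites this scaling relation from \cite{LiRH2000} rather than re-deriving it from Jacobian bounds as you do, which is an immaterial difference). Where you genuinely diverge is in how the reference-element equivalence is justified. The paper makes it fully explicit: starting from (\ref{u_K}) it writes $|\hat u_h|^2_{1,\hat K}=(\textbf{G}\textbf{u}_K)^T\textbf{W}(\textbf{G}\textbf{u}_K)$, computes the $9\times 9$ Gram matrix $\textbf{W}$ entrywise by integrating over $\hat K$, and verifies that this concrete symmetric matrix is positive definite. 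You replace this computation by a soft argument: the nine monomials $L_1,L_2,L_3,L_1^2,L_2^2,L_3^2,L_2L_3,L_1L_3,L_1L_2$ are linearly independent on $\hat K$, their span contains no nonzero constant (a vanishing gradient forces $\hat u$ to be constant, and a polynomial in this span with constant value must vanish), hence $\textbf{c}\mapsto|\hat u|_{1,\hat K}$ is a genuine norm on $\mathbb{R}^9$, equivalent to the Euclidean norm by finite-dimensionality. Both routes are rigorous; yours is shorter and generalizes immediately to any polynomial order, while the paper's explicit $\textbf{W}$ yields concrete equivalence constants (its extreme eigenvalues), consistent with the explicit element-matrix style of the rest of Section 3. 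One small imprecision in your write-up: the reverse scaling bound should carry $\|B_K\|^{-1}$, not $\|B_K\|$; since under (\ref{shape-regular}) both $\|B_K\|^{-2}|\det B_K|$ and $\|B_K^{-1}\|^{2}|\det B_K|$ are $\sim h_K$, the conclusion is unaffected.
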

\begin{proof}
Under the translation (\ref{eq:relation-coordinates}), let $ \hat{u}_{h}\vert_{\hat{K}} $ be the interpolation function on reference element $ \hat{K} $ (\ref{reference element}).
%	It follows from  (\ref{shape-regular}) and (\ref{def:volume coordinates}) that $ |\frac{\partial L_{i_{2}}}{\partial x_{i_{1}}}|\lesssim \frac{1}{h_{K}}$  $ (i_{1},i_{2}\in\mathcal{Z}_{3}^{(1)}) $, then 
%	\begin{equation*}
	%		\begin{split}
		%		|u_{h}|_{1,K}^{2}&=\iiint_{K}\sum\limits_{i_{1}\in\mathcal{Z}_{3}^{(1)}}(\dfrac{\partial u_{h}}{\partial x_{i_{1}}})^{2}\,dx_{1}dx_{2}dx_{3}\\
		%		&=6|K|\iiint_{\hat{K}}\sum\limits_{i_{1}\in\mathcal{Z}_{3}^{(1)}}(\!\sum\limits_{i_{2}\in\mathcal{Z}_{3}^{(1)}}\dfrac{\partial \hat{u}_{h}}{\partial L_{i_{2}}}\dfrac{\partial L_{i_{2}}}{\partial x_{i_{1}}})^{2}\,dL_{1}dL_{2}dL_{3}
		%		\lesssim h_{K}|\hat{u}_{h}|_{1,\hat{K}}^{2}.
		%		\end{split}
	%		\end{equation*}
%	Similarly, $ |\hat{u}_{h}|_{1,\hat{K}}^{2}\lesssim \frac{1}{h_{K} }|u_{h}|_{1,K}^{2}$.
Since $ \mathcal{T}_{h} $ is a regular partition, the following relation for Sobolev semi-norms holds \cite{LiRH2000}:
%	the relationship between the two Sobolev half norms $ |u_{h}|_{1,K}^{2} $, $ | \hat{u}_{h}|_{1, \hat{K}}^{2} $ as follows:
\begin{equation}\label{equi:uh^_uh}
	|u_{h}|_{1,K}^{2}\sim h_{K} |\hat{u}_{h}|_{1, \hat{K}}^{2}.
\end{equation}
By (\ref{u_K}), a direct calculation of $ |\hat{u}_{h}|_{1, \hat{K}}^{2} $ yields
\begin{align*}
	|\hat{u}_{h}|_{1,\hat{K}}^{2}&=\iiint_{\hat{K}}\sum_{i\in\mathcal{Z}_{3}^{(1)}}(\dfrac{\partial\hat{u}_{h}}{\partial L_{i}})^{2}\,dL_{1}dL_{2}dL_{3}\\
	&=\iiint_{\hat{K}}\left( (\textbf{w}_{1}\textbf{G}\textbf{u}_{K})^{T}(\textbf{w}_{1}\textbf{G}\textbf{u}_{K})+(\textbf{w}_{2}\textbf{G}\textbf{u}_{K})^{T}(\textbf{w}_{2}\textbf{G}\textbf{u}_{K})+(\textbf{w}_{3}\textbf{G}\textbf{u}_{K})^{T}(\textbf{w}_{3}\textbf{G}\textbf{u}_{K})
	\right) dL_{1}dL_{2}dL_{3}\\
	&=(\textbf{G}\textbf{u}_{K})^{T}\textbf{W}(\textbf{G}\textbf{u}_{K}),
\end{align*}
where 
\begin{align*}
	\textbf{w}_{1}=(1,0,0,2L_{1},0,0,0,L_{3},L_{2}), \textbf{w}_{2}=(0,1,0,0,2L_{2},0,L_{3},0,L_{1}), \textbf{w}_{3}=(0,0,1,0,0,2L_{3},L_{2},L_{1},0),
\end{align*}
and
\begin{equation*}
	\textbf{W}=\iiint_{\hat{K}}\big(\textbf{w}_{1}^{T}\textbf{w}_{1}+\textbf{w}_{2}^{T}\textbf{w}_{2}+\textbf{w}_{3}^{T}\textbf{w}_{3}\big)\,dL_{1}dL_{2}dL_{3}={\scriptsize \tfrac{1}{120}\begin{pmatrix}
			20&0&0&10&0&0&0&5&5\\
			0&20&0&0&10&0&5&0&5\\
			0&0&20&0&0&10&5&5&0\\
			10&0&0&8&0&0&0&2&2\\
			0&10&0&0&8&0&2&0&2\\
			0&0&10&0&0&8&2&2&0\\
			0&5&5&0&2&2&4&1&1\\
			5&0&5&2&0&2&1&4&1\\
			5&5&0&2&2&0&1&1&4\\
	\end{pmatrix}}.
\end{equation*}

It is verified that the real symmetric matrix $ \textbf{W} $ is positive definite. Therefore,   $|\hat{u}_{h}|_{1,\hat{K}}^{2}\!\sim\!\|\textbf{G}\textbf{u}_{K}\|^{2} $, which together with (\ref{equi:uh^_uh}) completes the proof. 
 \end{proof}

Let $ \textbf{T} $ be the unique Moore-Penrose inverse of $ \textbf{G} $, such that $ \textbf{T}\textbf{G}\textbf{T}\!=\!\textbf{T},\textbf{G}\textbf{T}\textbf{G}\!=\!\textbf{G},(\textbf{T}\textbf{G})^{T}\!=\!\textbf{T}\textbf{G},(\textbf{G}\textbf{T})^{T}\!=\!\textbf{G}\textbf{T} $. By computing in \textit{Matlab}, we get the $ 10\times9 $ matrix
\begin{equation}\label{T}
\textbf{T}=
{\scriptsize
	\tfrac{1}{40}\left( 
	\begin{array}{rrrrrrrrr}
		30&-10&-10&33&-7&-7&-1&-1&-1\\
		-10&30&-10&-7&33&-7&-1&-1&-1\\
		-10&-10&30&-7&-7&33&-1&-1&-1\\
		-10&-10&-10&-7&-7&-7&-1&-1&-1\\
		-10&10&10&-7&3&3&9&-1&-1\\
		10&-10&10&3&-7&3&-1&9&-1\\
		10&10&-10&3&3&-7&-1&-1&9\\
		10&-10&-10&3&-7&-7&-1&-1&-1\\
		-10&10&-10&-7&3&-7&-1&-1&-1\\
		-10&-10&10&-7&-7&3&-1&-1&-1\\
	\end{array}
	\right),
}
\end{equation}	
and
\begin{equation}\label{TG}
\textbf{T}\textbf{G}=\mathbb{E}_{_{10}}-\dfrac{1}{10}\mathbb{1},
\end{equation}
where $\mathbb{E}_{_{n}}  $ is the $ n\times n $ identity matrix, and $ \mathbb{1} $  is the $10\times 10  $ matrix in which all entries are 1. The following Lemma~\ref{Lemma:99matix} helps us to simplify the local stability (\ref{eq:constant_element ellipitic}).
\begin{lemma} \label{Lemma:99matix}
For the element stiffness matrix $\mathbb{A}_{K,\lambda}  $ in (\ref{A_Kp}), we have
\begin{equation*}
	\mathbb{A}_{K,\lambda}=\textbf{G}^{T}\textbf{T}^{T}\mathbb{A}_{K,\lambda}\textbf{T}\textbf{G},
\end{equation*}
where $ \textbf{G} $ and $ \textbf{T} $ are given by (\ref{G}) and (\ref{T}), respectively.
\end{lemma}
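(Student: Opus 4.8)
The plan is to read off from (\ref{TG}) that $\textbf{T}\textbf{G}=\mathbb{E}_{10}-\tfrac{1}{10}\mathbb{1}$ is \emph{symmetric}, so the asserted identity $\mathbb{A}_{K,\lambda}=\textbf{G}^{T}\textbf{T}^{T}\mathbb{A}_{K,\lambda}\textbf{T}\textbf{G}$ is really just $(\textbf{T}\textbf{G})^{T}\mathbb{A}_{K,\lambda}(\textbf{T}\textbf{G})=\mathbb{A}_{K,\lambda}$. Writing $\textbf{e}=(1,\dots,1)^{T}\in\mathbb{R}^{10}$, so that $\mathbb{1}=\textbf{e}\textbf{e}^{T}$ and $\textbf{T}\textbf{G}=\mathbb{E}_{10}-\tfrac{1}{10}\textbf{e}\textbf{e}^{T}$, I would expand
\[
(\textbf{T}\textbf{G})^{T}\mathbb{A}_{K,\lambda}(\textbf{T}\textbf{G})=\mathbb{A}_{K,\lambda}-\tfrac{1}{10}(\mathbb{A}_{K,\lambda}\textbf{e})\textbf{e}^{T}-\tfrac{1}{10}\textbf{e}(\textbf{e}^{T}\mathbb{A}_{K,\lambda})+\tfrac{1}{100}\textbf{e}(\textbf{e}^{T}\mathbb{A}_{K,\lambda}\textbf{e})\textbf{e}^{T}.
\]
All three correction terms collapse, and the identity follows, once I establish the two facts $\mathbb{A}_{K,\lambda}\textbf{e}=0$ and $\textbf{e}^{T}\mathbb{A}_{K,\lambda}=0$; that is, every row sum and every column sum of $\mathbb{A}_{K,\lambda}$ vanishes. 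Geometrically this reflects that $a_{h}^{K}(u_{h},\Pi_{\lambda}^{*}u_{h})$ depends only on $\nabla u_{h}$, hence is insensitive to shifting all nodal values by a constant (the $\textbf{e}$-direction), while $\textbf{T}\textbf{G}$ is exactly the projector that removes that direction.

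The row sums are the easy half. Since $a_{mn,\lambda}=a_{h}^{K}(\phi_{n},\Pi_{\lambda}^{*}\phi_{m})$ is linear in its first slot and the quadratic Lagrange basis is a partition of unity, $\sum_{n\in\mathcal{Z}_{10}^{(1)}}\phi_{n}=1$, the $m$-th row sum equals $a_{h}^{K}(1,\Pi_{\lambda}^{*}\phi_{m})$. Because $\nabla 1=0$, the flux integrals defining $a_{h}^{K}$ in (\ref{def:bilinear form}) vanish identically, giving $\mathbb{A}_{K,\lambda}\textbf{e}=0$. (Equivalently, one may use $\mathbb{A}_{K,\lambda}=\textbf{S}\mathbb{A}_{K,1}$ from (\ref{A_Kp}) together with $\mathbb{A}_{K,1}\textbf{e}=0$, which is the same partition-of-unity argument applied to $a_{h}^{K}(\phi_{n},\chi_{m})$.)

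The column sums carry the real content and are the step I expect to be the main obstacle. By linearity in the second slot and the partition of unity, the $n$-th column sum equals $a_{h}^{K}(\phi_{n},\Pi_{\lambda}^{*}1)$, so I first check directly from Definition~\ref{definition:mapping} that $\Pi_{\lambda}^{*}1=1$: the vertex values are $1$, and every midpoint value is $\tfrac{1-\lambda}{2}(1+1)+\lambda=1$. Hence the column sum is $a_{h}^{K}(\phi_{n},1)=-\sum_{m}\iint_{\partial K_{P_{m}}^{*}\cap K}\nabla\phi_{n}\cdot\textbf{n}\,\mathrm{d}S$, where the surfaces $\partial K_{P_{m}}^{*}\cap K$ are the interior dual faces of $K$. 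Each such face is shared by exactly two dual cells with opposite outer normals, so the summed fluxes cancel in pairs and the total is zero, giving $\textbf{e}^{T}\mathbb{A}_{K,\lambda}=0$. As a purely algebraic alternative I would note $\textbf{e}^{T}\textbf{S}=\textbf{e}^{T}$ (the columns of $\textbf{S}$ in (\ref{S}) each sum to $1$) and combine it with the column-sum vanishing of $\mathbb{A}_{K,1}$, which follows from the splitting (\ref{A_k1}) via $\textbf{e}^{T}\textbf{v}_{1}=\tfrac{1}{6}$ and the divergence theorem applied to $\textbf{A}$. Substituting $\mathbb{A}_{K,\lambda}\textbf{e}=0$ and $\textbf{e}^{T}\mathbb{A}_{K,\lambda}=0$ into the displayed expansion annihilates all three correction terms (the last because $\textbf{e}^{T}\mathbb{A}_{K,\lambda}\textbf{e}=0$ as well), which yields the claim. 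The delicate point throughout is the column identity: verifying $\Pi_{\lambda}^{*}1=1$ is what lets the partition-of-unity argument pass to the test side, and the flux-cancellation (discrete divergence) statement is what actually forces the column sums to vanish.
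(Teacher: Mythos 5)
Your proposal is correct and takes essentially the same route as the paper: both reduce the claim to the two identities $\mathbb{A}_{K,\lambda}\mathbb{1}=\mathbb{1}\mathbb{A}_{K,\lambda}=\mathbb{0}_{_{10}}$ (vanishing row and column sums) combined with the symmetric projector form $\textbf{T}\textbf{G}=\mathbb{E}_{_{10}}-\tfrac{1}{10}\mathbb{1}$ from (\ref{TG}). The only difference is that the paper treats those sum identities as obvious, deriving the general-$\lambda$ case from $\mathbb{A}_{K,1}$ via $\mathbb{1}\textbf{S}=\mathbb{1}$ and (\ref{A_Kp}), whereas you justify them explicitly through the partition of unity, the observation $\Pi_{\lambda}^{*}1=1$, and the cancellation of fluxes across interior dual faces --- a more detailed but not genuinely different argument.
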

\begin{proof}
Obviously, $ \mathbb{1}\textbf{S}=\mathbb{1} $ and  $ \mathbb{A}_{K,1}\mathbb{1}=\mathbb{1}\mathbb{A}_{K,1}=\mathbb{0}_{_{10}} $, where $ \mathbb{0}_{_{n}} $  is the $n\times n  $ zero matrix. Then, we derive  $ \mathbb{A}_{K,\lambda}\mathbb{1}=\mathbb{1}\mathbb{A}_{K,\lambda}=\mathbb{0}_{_{10}} $ from (\ref{A_Kp}).
By (\ref{TG}), we obtain
\begin{equation*}
	(\textbf{T}\textbf{G})^{T}\mathbb{A}_{K,\lambda}\textbf{T}\textbf{G}=(\mathbb{E}_{_{10}}-\dfrac{1}{10}\mathbb{1})^{T}\mathbb{A}_{K,\lambda}(\mathbb{E}_{_{10}}-\dfrac{1}{10}\mathbb{1})=\mathbb{A}_{K,\lambda}(\mathbb{E}_{_{10}}-\dfrac{1}{10}\mathbb{1})=\mathbb{A}_{K,\lambda}.
\end{equation*}
This  completes the  proof. 
 \end{proof}

Let $ \mathbb{B}_{K,\lambda} $ be a $ 9\times9 $  symbolic matrix, given by
\begin{equation}\label{B_Kp}
\mathbb{B}_{K,\lambda}=\textbf{T}^{T}\mathbb{A}_{K,\lambda}\textbf{T} . 
\end{equation}
By Lemma~\ref{Lemma:99matix}, we have
\begin{equation*}
a_{h}^{K}(u_{h},\Pi_{\lambda}^{*}u_{h})=\dfrac{1}{2}\textbf{u}_{K}^{T}(\mathbb{A}_{K,\lambda}+\mathbb{A}_{K,\lambda}^T)\textbf{u}_{K}=\dfrac{1}{2}\textbf{u}_{K}^{T}(\textbf{G}^{T}\mathbb{B}_{K,\lambda}\textbf{G}+\textbf{G}^{T}\mathbb{B}_{K,\lambda}^{T}\textbf{G})\textbf{u}_{K}=(\textbf{G}\textbf{u}_{K})^{T}\overline{\mathbb{B}}_{K,\lambda}(\textbf{G}\textbf{u}_{K}),
\end{equation*} 
where $ \overline{\mathbb{B}}_{K,\lambda} $ is the symmetrization of $ \mathbb{B}_{K,\lambda} $, i.e.,
\begin{equation}\label{B}
\overline{\mathbb{B}}_{K,\lambda}=\dfrac{\mathbb{B}_{K,\lambda}+\mathbb{B}_{K,\lambda}^{T}}{2}.
\end{equation}
Recalling the equivalent discrete norm (\ref{equi_norm}), the local stability (\ref{eq:constant_element ellipitic}) is equivalent to 
\begin{equation*}
(\textbf{G}\textbf{u}_{K})^{T}\overline{\mathbb{B}}_{K,\lambda}(\textbf{G}\textbf{u}_{K}) \gtrsim h_{K}(\textbf{G}\textbf{u}_{K})^{T}(\textbf{G}\textbf{u}_{K}).
\end{equation*}
Therefore,  the local stability (\ref{eq:constant_element ellipitic}) is converted into a positive definiteness of the matrix  $ \frac{1}{h_{K}}\overline{\mathbb{B}}_{K,\lambda} $,  which is proved in the following subsections.
\subsection{A restriction on $ \mathcal{T}_{h}^{*} $ and some results}\label{subsection:stability}
In this subsection, under the orthogonal condition on the surface (\ref{orthogonal_plane_para_t}), we prove a positive definiteness of $ \frac{1}{h_{K}}\overline{\mathbb{B}}_{K,\lambda} $ for any regular tetrahedron $ K \in  \mathcal{T}_{h} $, and reduce $ \frac{1}{h_{K}}\overline{\mathbb{B}}_{K,\lambda} $ for any general tetrahedron $ K \in  \mathcal{T}_{h} $.

\vspace{2mm}
Firstly, we try to deal with $\mathbb{B}_{K,\lambda}$ in (\ref{B_Kp}). By (\ref{A_Kp}) and (\ref{A_k1}), one gets
\begin{equation}\label{B_0} \mathbb{B}_{K,\lambda}=\textbf{T}^{T}\mathbb{A}_{K,\lambda}\textbf{T}=\textbf{T}^{T}\textbf{S}\mathbb{A}_{K,1}\textbf{T}=\textbf{T}^T\textbf{S}\textbf{A}\textbf{T}-(\textbf{T}^T\textbf{S}\textbf{v}_{1})\big(\textbf{v}_{2}^T\textbf{T}\big).
\end{equation}
By computing in  \textit{Matlab}, we have
\begin{equation}\label{part1}
\begin{split}
	&\textbf{T}^T\textbf{S}\textbf{v}_{1}=s_{_0}\big( 0,0,0,3,3,3,-1,-1,-1\big) ^T,\\
	&\textbf{v}_{2}^T\textbf{T}=(0,0,0,2R_{_1},2R_{_2},2R_{_3},-2r_{_{\!14}},-2r_{_{\!24}},-2r_{_{\!34}}),
\end{split}
\end{equation}
and organize $ \textbf{T}^T\textbf{S}\textbf{A}\textbf{T} $ into a $ 3\times3 $ block matrix as follows
\begin{flalign}\label{part2}
\textbf{T}^T\textbf{S}\textbf{A}\textbf{T}\!=\!\!
{\small \left(\begin{array}{c;{3pt/1pt}cc} 
		s_{_1}\textbf{M}^{K}_{_{3\!\times\!3}} &\dfrac{s_{_1}}{2}\textbf{M}^{K}_{_{3\!\times\!3}}\!\!+\!\!\dfrac{s^{*}}{2}\textbf{Q}^{(1)}_{_{3\!\times\!3}}&\quad\dfrac{s_{_1}}{4}(\textbf{M}^{K}_{_{3\!\times\!3}}\textbf{C}_{_{3\!\times\!3}})\!+\!\!\dfrac{s^{*}}{4}\textbf{Q}^{(2)}_{_{3\!\times\!3}} \\[2mm]
		\hdashline[3pt/1pt]
		\vspace{1ex}
		(s_{_1}\!-\!\dfrac{2s_{_2}+s_{_3}}{2})\textbf{M}^{K}_{_{3\!\times\!3}} & \textbf{L}^{(1)}_{_{3\!\times\!3}}&\textbf{L}^{(2)}_{_{3\!\times\!3}}\\
		\dfrac{2s_{_2}+s_{_3}}{4}(\textbf{C}_{_{3\!\times\!3}}\textbf{M}^{K}_{_{3\!\times\!3}})& \textbf{L}^{(3)}_{_{3\!\times\!3}}&\textbf{L}^{(4)}_{_{3\!\times\!3}}
	\end{array}
	\right)}.
\end{flalign}
%\begin{equation*}
%	\begin{split}
%	&\textbf{L}^{(1)}_{_{3\!\times\!3}}=(\frac{s_{_1}}{2}-\frac{s_{_2}+s_{_3}}{2})\textbf{M}^{K}_{_{3\!\times\!3}}+(\frac{3s_{_1}}{20}-\frac{s_{_2}+s_{_3}}{2})\textbf{J}^{(1)}_{_{3\!\times\!3}}+s_{_3}\textbf{D}^{(1)}_{_{3\!\times\!3}}+\frac{s^{*}}{2}\textbf{Q}^{(1)}_{_{3\!\times\!3}},\\
%	&\textbf{L}^{(2)}_{_{3\!\times\!3}}=(\frac{s_{_1}}{4}-\frac{s_{_2}+s_{_3}}{4})(\textbf{M}^{K}_{_{3\!\times\!3}}\textbf{C}_{_{3\!\times\!3}})-(\frac{3s_{_1}}{20}-\frac{s_{_2}}{2})\textbf{J}^{(2)}_{_{3\!\times\!3}}+\frac{s_{_3}}{2}\textbf{D}^{(2)}_{_{3\!\times\!3}}+\frac{s^{*}}{4}\textbf{Q}^{(2)}_{_{3\!\times\!3}},\\
%	&\textbf{L}^{(3)}_{_{3\!\times\!3}}=\frac{s_{_2}}{2}(\textbf{C}_{_{3\!\times\!3}}\textbf{M}^{K}_{_{3\!\times\!3}})-\frac{s_{_1}}{20}\textbf{J}^{(1)}_{3\times 3}+\frac{s_{_2}}{2}\textbf{D}^{(1)}_{_{3\!\times\!3}}-\frac{s_{_2}-s_{_3}}{2}\textbf{D}^{(3)}_{_{3\!\times\!3}},\\
%	&\textbf{L}^{(4)}_{_{3\!\times\!3}}=\frac{s_{_2}}{4}(\textbf{C}_{_{3\!\times\!3}}\textbf{M}^{K}_{_{3\!\times\!3}}\textbf{C}_{_{3\!\times\!3}})+(\frac{s_{_1}}{20}-\frac{s_{_2}}{4})\textbf{J}^{(2)}_{_{3\!\times\!3}}+\frac{s_{_2}}{4}\textbf{D}^{(2)}_{_{3\!\times\!3}}-\frac{s_{_2}-s_{_3}}{4}\textbf{D}^{(4)}_{_{3\!\times\!3}}.
%	\end{split}
%	\end{equation*}
Here
\begin{gather} 
s_{_0}=\dfrac{1}{240}+\dfrac{4\alpha\beta\gamma-1}{144}\lambda,\quad
s_{_1}=t_{_1}+2t_{_2}+2t_{_3}+t_{_4}=\dfrac{1}{6},\quad s_{_2}=t_{_3}\lambda,\quad s_{_3}=t_{_4}\lambda,\label{s_{_0}-3}\\	
s^{*}=-t_{_1}+2t_{_2}+t_{_4}\mbox{ and $  s^{*}\!=\!0 $ coincides with the orthogonal condition on the surface (\ref{orthogonal_plane_para_t})},\nonumber\\
\textbf{M}^{K}_{_{3\!\times\!3}}\!=\! \left( \begin{array}{rrr}
R_{_1}&-r_{_{\!34}}&-r_{_{\!24}}\\
-r_{_{\!34}}&R_{_2}&-r_{_{\!14}}\\
-r_{_{\!24}}&-r_{_{\!14}}&R_{_3}\\
\end{array}
\right),\qquad
\textbf{C}_{_{3\!\times\!3}}\!=\! \begin{pmatrix}
0&1&1\\
1&0&1\\
1&1&0\\
\end{pmatrix}.\label{M,C}
\end{gather}
We put those symbolic matrices $\textbf{Q}^{(1)}_{_{3\!\times\!3}}  $, $\textbf{Q}^{(2)}_{_{3\!\times\!3}}  $, $ \textbf{L}^{(1)}_{_{3\!\times\!3}} $, $ \textbf{L}^{(2)}_{_{3\!\times\!3}} $, $ \textbf{L}^{(3)}_{_{3\!\times\!3}} $, $ \textbf{L}^{(4)}_{_{3\!\times\!3}} $ in Appendix A.2.	
Lemma~\ref{determine_relation} shows a property of $ \textbf{M}^{K}_{_{3\!\times\!3}} $, and the proof is included in Appendix B.2.
\begin{lemma}\label{determine_relation}
If $\mathcal{T}_{h}$ is a regular partition, then $ \frac{1}{h_{K}}\textbf{M}^{K}_{_{3\!\times\!3}}  $ is unconditionally positive definite for any tetrahedral element $ K\in \mathcal{T}_{h}$.
\end{lemma}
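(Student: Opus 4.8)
The plan is to recognize $\textbf{M}^{K}_{3\times3}$ as a scalar multiple of a Gram matrix, which makes its positive definiteness geometrically transparent, and then to pin down the scaling through the regularity assumption (\ref{shape-regular}). First I would read off from Lemma~\ref{lemma:relation Li} that every entry of $\textbf{M}^{K}_{3\times3}$ in (\ref{M,C}) equals $6|K|$ times an inner product of volume-coordinate gradients: the diagonal entries are $R_{i}=6|K|(\nabla L_{i}\cdot\nabla L_{i})$, and the off-diagonal entries are $-r_{jk}=6|K|(\nabla L_{i_{1}}\cdot\nabla L_{i_{2}})$ with $\{i_{1},i_{2},j,k\}=\mathcal{Z}_{4}^{(1)}$. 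Matching indices gives $\textbf{M}^{K}_{3\times3}=6|K|\,\textbf{G}$, where $\textbf{G}=(\nabla L_{i}\cdot\nabla L_{j})_{i,j\in\mathcal{Z}_{3}^{(1)}}$ is the Gram matrix of $\nabla L_{1},\nabla L_{2},\nabla L_{3}$, so that for every $\textbf{x}=(x_{1},x_{2},x_{3})^{T}$,
\[
\textbf{x}^{T}\textbf{M}^{K}_{3\times3}\textbf{x}=6|K|\,\bigl|\textstyle\sum_{i=1}^{3}x_{i}\nabla L_{i}\bigr|^{2}\ge 0 .
\]

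Next I would establish strict definiteness by checking that $\nabla L_{1},\nabla L_{2},\nabla L_{3}$ are linearly independent. By (\ref{eq:relation-coordinates}) the affine map carrying the reference element (\ref{reference element}) onto $K$ has Jacobian $\textbf{J}$ whose columns are the edge vectors $P_{1}-P_{4},P_{2}-P_{4},P_{3}-P_{4}$, with $|\det\textbf{J}|=6|K|>0$; moreover $\nabla L_{i}$ is exactly the $i$-th column of $\textbf{J}^{-T}$, so $\sum_{i}x_{i}\nabla L_{i}=\textbf{J}^{-T}\textbf{x}$. Invertibility of $\textbf{J}$ then forces the right-hand side above to vanish only at $\textbf{x}=0$, whence $\textbf{M}^{K}_{3\times3}=6|K|\,\textbf{J}^{-1}\textbf{J}^{-T}$ is positive definite for each nondegenerate $K$.

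The actual content lies in the word ``unconditionally'', which I read as uniformity over the whole mesh rather than definiteness for a single $K$. To get this I would estimate $\textbf{x}^{T}\textbf{M}^{K}_{3\times3}\textbf{x}=6|K|\,|\textbf{J}^{-T}\textbf{x}|^{2}\ge 6|K|\,\|\textbf{J}\|^{-2}|\textbf{x}|^{2}$ and combine two elementary bounds: $\|\textbf{J}\|\lesssim h_{K}$, since the columns of $\textbf{J}$ are edges of length at most $h_{K}$, and $|K|\gtrsim h_{K}^{3}$, since by (\ref{shape-regular}) the inscribed sphere has radius $\rho_{K}\ge h_{K}/\sigma$. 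These give $6|K|\,\|\textbf{J}\|^{-2}\gtrsim h_{K}$ with a constant depending only on $\sigma$, hence $\tfrac{1}{h_{K}}\textbf{M}^{K}_{3\times3}\gtrsim\mathbb{E}_{3}$, which is the claim.

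The main obstacle is not the positive definiteness itself --- immediate once the Gram structure in Lemma~\ref{lemma:relation Li} is spotted --- but ensuring the lower bound is genuinely uniform: every constant must be traceable to $\sigma$ alone, and one must verify that the two scalings $|K|\sim h_{K}^{3}$ and $\|\textbf{J}\|^{2}\sim h_{K}^{2}$ balance to leave exactly the factor $h_{K}$ demanded by the discrete norm (\ref{equi_norm}). The supporting tetrahedral identities needed to make $\|\textbf{J}\|$ and $|K|$ explicit in terms of edge lengths and plane angles are the ones collected in Appendix~B.
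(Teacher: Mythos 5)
Your proof is correct, but it takes a genuinely different route from the paper's. The paper (Appendix B.2) argues via Sylvester's criterion: using Lemma~\ref{lemma:relation Li} it computes the three leading principal minors of $\frac{1}{h_{K}}\textbf{M}^{K}_{3\times3}$ explicitly --- the first is $\frac{2|T_{1}|^{2}}{3h_{K}|K|}$, the second reduces to $\frac{|\overline{P_{3}P_{4}}|^{2}}{h_{K}^{2}}$ via the identity $R_{1}R_{2}=r_{34}^{2}+|\overline{P_{3}P_{4}}|^{2}$, and the determinant is shown, after a lengthy trigonometric computation relying on the plane-angle and dihedral-angle relations of Appendix B.1, to equal $\frac{6|K|}{h_{K}^{3}}$ --- and then invokes regularity of the partition. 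You instead read off from Lemma~\ref{lemma:relation Li} that $\textbf{M}^{K}_{3\times3}$ is exactly $6|K|$ times the Gram matrix of $\nabla L_{1},\nabla L_{2},\nabla L_{3}$, i.e. $6|K|\,\textbf{J}^{-1}\textbf{J}^{-T}$ with $\textbf{J}$ the Jacobian of the affine reference map, which makes positive definiteness immediate from invertibility of $\textbf{J}$ and yields the uniform bound $\frac{1}{h_{K}}\textbf{M}^{K}_{3\times3}\gtrsim\mathbb{E}_{3}$ directly from $\|\textbf{J}\|\lesssim h_{K}$ and $|K|\gtrsim h_{K}^{3}$, the latter a consequence of (\ref{shape-regular}). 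Your route is shorter and more conceptual: it bypasses the trigonometric machinery of Appendix B entirely, and it makes the dependence on $\sigma$ explicit, whereas the paper's check of the minors strictly speaking gives definiteness of each individual matrix and leaves the uniform spectral lower bound (which the stability argument ultimately needs) implicit; your Gram-matrix estimate delivers that uniformity for free. It is also consistent with the paper's computation, since $\det\bigl(6|K|\,\textbf{J}^{-1}\textbf{J}^{-T}\bigr)=(6|K|)^{3}/(6|K|)^{2}=6|K|$, recovering the determinant formula in (\ref{eq:A2-1}). One cosmetic caveat: the symbol $\textbf{G}$ is already reserved in the paper for the $9\times10$ matrix (\ref{G}), so your Gram matrix should be renamed to avoid a clash.
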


\vspace{2mm}
Now, we consider $ \frac{1}{h_{K}}\overline{\mathbb{B}}_{K,\lambda} $ for any regular tetrahedron $ K\in \mathcal{T}_{h}$. The following result shows  that   $\frac{1}{h_{K}}\overline{\mathbb{B}}_{K,\lambda}$ is  positive definite for given $ \lambda $ in a certain range.
\begin{lemma}\label{Lemma:regular tetra}
Assume that  $ K$ is a regular tetrahedron. If the orthogonal condition on the surface (\ref{orthogonal_plane_para_t}) (i.e., $ s^*=0 $) holds, then the element matrix
$ \frac{1}{h_{K}}\overline{\mathbb{B}}_{K,\lambda} $ in (\ref{B}) is positive definite if and only if $ \lambda $  satisfies
\begin{flalign} \label{p}
%	\mbox{(mapping parameter restriction)}\,
\dfrac{(2t_{_3}+5t_{_4})-2\sqrt{2t_{_4}(2t_{_3}+3t_{_4})}}{6(2t_{_3}+t_{_4})^2}<\lambda<\dfrac{(2t_{_3}+5t_{_4})+2\sqrt{2t_{_4}(2t_{_3}+3t_{_4})}}{6(2t_{_3}+t_{_4})^2},
\end{flalign}
where $t_{3},t_{4}$ are constants in (\ref{Ints}).
\end{lemma}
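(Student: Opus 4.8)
The plan is to exploit the full symmetry of the regular tetrahedron to block-diagonalize the $9\times 9$ matrix $\overline{\mathbb{B}}_{K,\lambda}$ explicitly, thereby reducing its positive definiteness to two $3\times 3$ conditions, one of which produces the quadratic in $\lambda$ whose roots are the endpoints of (\ref{p}).

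First I would specialize every geometric quantity to the regular tetrahedron. Since all edges share a common length and all dihedral angles equal $\arccos(1/3)$, definition (\ref{def:rij}) gives a single value $r_{jk}=r>0$ for all $(j,k)$ and $R_i=3r$ for all $i$, so by (\ref{M,C}) we get $\textbf{M}^{K}_{_{3\!\times\!3}}=r(4\mathbb{E}_{_3}-\textbf{J})$, where $\textbf{J}$ denotes the $3\times3$ all-ones matrix and $\textbf{C}_{_{3\!\times\!3}}=\textbf{J}-\mathbb{E}_{_3}$. Because the whole construction is invariant under the $S_4$ action permuting the four vertices, each of the nine $3\times3$ blocks of $\textbf{T}^T\textbf{S}\textbf{A}\textbf{T}$, of the rank-one correction $(\textbf{T}^T\textbf{S}\textbf{v}_1)(\textbf{v}_2^T\textbf{T})$ in (\ref{B_0}), and hence of $\mathbb{B}_{K,\lambda}$ and its symmetrization $\overline{\mathbb{B}}_{K,\lambda}$, collapses to the form $a\,\mathbb{E}_{_3}+b\,\textbf{J}$; here I would read off the scalars $a,b$ of each block from the regular-tetrahedron values of $\textbf{M}^{K}_{_{3\!\times\!3}}$, $\textbf{C}_{_{3\!\times\!3}}$ and the appendix matrices $\textbf{Q}^{(k)}_{_{3\!\times\!3}},\textbf{L}^{(k)}_{_{3\!\times\!3}}$, using $s^{*}=0$ to annihilate the $\textbf{Q}$-terms in the first block row and $s_1=\tfrac16$, $2s_2+s_3=(2t_3+t_4)\lambda$ throughout.

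Next I would simultaneously block-diagonalize. Let $\textbf{P}$ be any orthogonal matrix with $\textbf{P}^T\textbf{J}\,\textbf{P}=\mathrm{diag}(3,0,0)$ and first column $\tfrac{1}{\sqrt3}(1,1,1)^T$; conjugating $\overline{\mathbb{B}}_{K,\lambda}$ blockwise by $\textbf{P}$ sends every block $a\mathbb{E}_{_3}+b\textbf{J}$ to $\mathrm{diag}(a+3b,\,a,\,a)$. Regrouping the resulting coordinates decouples $\overline{\mathbb{B}}_{K,\lambda}$ into a $3\times3$ matrix $\overline{\mathbb{B}}^{(0)}=\big(a_{ij}+3b_{ij}\big)$ from the symmetric direction and two identical $3\times3$ copies $\overline{\mathbb{B}}^{(1)}=\big(a_{ij}\big)$ from the two-dimensional eigenspace of $\textbf{J}$. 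Consequently $\tfrac{1}{h_K}\overline{\mathbb{B}}_{K,\lambda}$ is positive definite if and only if both $\overline{\mathbb{B}}^{(0)}$ and $\overline{\mathbb{B}}^{(1)}$ are (the factor $1/h_K>0$ being immaterial to the sign). I would then apply Sylvester's criterion to each small matrix; using $r>0$, $t_3,t_4>0$ and Lemma~\ref{determine_relation}, one of the two blocks together with the top principal minors of the other remains unconditionally positive, so the only genuine restriction descends to a single leading principal minor.

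The decisive computation is that remaining determinant: because the coupling term $2s_2+s_3=(2t_3+t_4)\lambda$ enters two distinct entries, this minor is a quadratic in $\lambda$, and I expect it to reduce, after clearing positive factors, to $3(2t_3+t_4)^2\lambda^2-(2t_3+5t_4)\lambda+\tfrac{1}{12}$. Positive definiteness forces this upward-opening quadratic to be negative, i.e.\ $\lambda$ strictly between its two real roots; its discriminant equals $(2t_3+5t_4)^2-(2t_3+t_4)^2=8t_4(2t_3+3t_4)>0$, and solving yields exactly the interval (\ref{p}). The main obstacle I anticipate is the bookkeeping of the symmetry-reduction step—verifying that all blocks genuinely reduce to the $a\mathbb{E}_{_3}+b\textbf{J}$ form with the correct scalars, and confirming which principal minors are unconditionally positive—since the offending quadratic, and hence the sharp two-sided bound on $\lambda$, must emerge from precisely one determinant while every competing minor imposes no constraint.
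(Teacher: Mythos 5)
Your proposal is correct in substance, and it takes a genuinely different route from the paper's proof. The paper makes no use of symmetry: it applies a single explicit, ad hoc congruence $\textbf{C}_{1}$ (chosen so that the first eight entries of $\textbf{C}_{1}^{T}\textbf{T}^{T}\textbf{S}\textbf{v}_{1}$ and $\textbf{v}_{2}^{T}\textbf{T}\textbf{C}_{1}$ vanish, confining $\gamma$ to one corner) and then computes, symbolically in \textit{Matlab}, the leading principal minors of orders $4$ through $9$; each factors into powers of $c$, of $s_{2}-s_{3}$, of a quadratic $\varphi_{1}(\lambda)$ and of $\varphi_{2}\propto\lambda$, and the conclusion is read off from these factorizations. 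Your symmetry reduction is sound and verifiable by hand: writing $r$ for the common value of the $r_{jk}$ and $\textbf{J}$ for the $3\times3$ all-ones matrix, one has $\textbf{M}^{K}_{_{3\!\times\!3}}=r(4\mathbb{E}_{_{3}}-\textbf{J})$, $\textbf{Q}^{(1)}_{_{3\!\times\!3}}=r(2\textbf{J}-8\mathbb{E}_{_{3}})$, $\textbf{Q}^{(2)}_{_{3\!\times\!3}}=r(4\textbf{J}-8\mathbb{E}_{_{3}})$, $\textbf{J}^{(1)}_{_{3\!\times\!3}}=3r\textbf{J}$, $\textbf{J}^{(2)}_{_{3\!\times\!3}}=r\textbf{J}$, $\textbf{D}^{(1)}_{_{3\!\times\!3}}=3r\mathbb{E}_{_{3}}$, $\textbf{D}^{(2)}_{_{3\!\times\!3}}=\textbf{D}^{(3)}_{_{3\!\times\!3}}=r\mathbb{E}_{_{3}}$, $\textbf{D}^{(4)}_{_{3\!\times\!3}}=r(\textbf{J}-\mathbb{E}_{_{3}})$, the rank-one correction has blocks proportional to $\textbf{J}$, and the span of $\mathbb{E}_{_{3}},\textbf{J}$ is closed under products since $\textbf{J}^{2}=3\textbf{J}$; hence every block of $\overline{\mathbb{B}}_{K,\lambda}$ has the form $a\mathbb{E}_{_{3}}+b\textbf{J}$ and your splitting into $\big(a_{ij}+3b_{ij}\big)$ plus two copies of $\big(a_{ij}\big)$ is legitimate (note the relevant group is the $S_{3}$ permuting $P_{1},P_{2},P_{3}$ compatibly across the three coordinate groups, not all of $S_{4}$). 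Your predicted quadratic is exactly right: the $2\times2$ leading minor of the standard-representation block $\big(a_{ij}\big)$ equals $\frac{2}{3}\big(\frac{1}{3}-(2t_{3}-t_{4})\lambda\big)-\big(\frac{1}{2}-(2t_{3}+t_{4})\lambda\big)^{2}$, whose positivity is precisely $3(2t_{3}+t_{4})^{2}\lambda^{2}-(2t_{3}+5t_{4})\lambda+\frac{1}{12}<0$, i.e.\ (\ref{p}); up to the positive factor $9/2$ this is the paper's $\varphi_{1}$. Your route buys a hand computation and a conceptual explanation; the paper's congruence is the one that extends to general tetrahedra in the next subsection.

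One anticipated claim needs correction, though it does not derail the argument: it is false that every competing minor is unconditionally positive. The determinants of both $3\times3$ blocks carry factors whose sign is that of $\lambda$ (in the paper's notation $s_{2}-s_{3}=(t_{3}-t_{4})\lambda$ and $\varphi_{2}=\frac{5}{3}\alpha\beta(3-4\gamma)\lambda$), so Sylvester's criterion applied to your two blocks yields the conditions ``(\ref{p}) and $\lambda>0$,'' not (\ref{p}) alone. To reach the stated if-and-only-if you must add the paper's closing observation: both roots of the quadratic are positive, because $(2t_{3}+5t_{4})-2\sqrt{2t_{4}(2t_{3}+3t_{4})}>0$ (apply $x+y>2\sqrt{xy}$ with $x=2t_{4}$, $y=2t_{3}+3t_{4}$, $0<x<y$), so any $\lambda$ satisfying (\ref{p}) is automatically positive and the remaining minors are then positive. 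With that step inserted, your plan closes the proof.
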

\begin{proof}

It is observed from (\ref{B}) and (\ref{B_0}) that every entry of $\tfrac{1}{h_{K}}\overline{\mathbb{B}}_{K,\lambda}$ is a linear combination of $ \frac{r_{jk}}{h_{K}}$  $\big((j,k)\in\mathcal{Z}_{4}^{(2)}\big)$. By (\ref{def:rij}), since $ K $ is a regular tetrahedron, we get $ \tfrac{r_{jk}}{h_{K}} =\tfrac{|\overline{P_{j}P_{k}}|\cot\theta_{jk} }{h_{K}}=\cot\theta_{jk}=c$  $\,\,\forall(j,k)\in\mathcal{Z}_{4}^{(2)}$, where $ c $ is a  positive constant.

%	 Set $  \cot\theta_{jk}=1 $ $\big((j,k)\in\mathcal{Z}_{4}^{(2)}\big)$, which does not affect the positive definiteness of $\frac{1}{h_{K}}\overline{\mathbb{B}}_{K,\lambda}$. 

Choosing an invertible $ 9\times 9 $ matrix
%	\begin{center}
%		$\textbf{C}_{1}={\footnotesize \begin{pmatrix}
		%			1&0&0&0&0&0&0&0&0\\
		%			0&1&0&0&0&0&0&0&0\\
		%			0&0&1&0&0&0&0&0&0\\
		%			0&0&0&1&0&0&0&0&0\\
		%			0&0&0&-1&1&0&0&0&0\\
		%			0&0&0&0&-1&1&0&0&0\\
		%			0&0&0&0&0&1&1&0&0\\
		%			0&0&0&0&-2&2&-1&1&0\\
		%			0&0&0&0&2&0&0&-1&1\\
		%			\end{pmatrix}} $,
%	\end{center}
\begin{equation*}
\textbf{C}_{1}=
{\footnotesize \left(\begin{array}{r;{3pt/1pt}rrrrrr} 
		\mathbb{E}_{_{3}}&\multicolumn{3}{c}{ \mathbb{0}_{_{3}} } &\multicolumn{3}{c}{ \mathbb{0}_{_{3}} } \\  
		\hdashline[3pt/1pt]
		\multirow{3}{*}{$ \mathbb{0}_{_{3}} $}&1&0&0&0&0&0\\
		&-1&1&0&0&0&0\\
		&0&-1&1&0&0&0\\
		\multirow{3}{*}{$ \mathbb{0}_{_{3}} $}&0&0&1&1&0&0\\
		&0&-2&2&-1&1&0\\
		&0&2&0&0&-1&1\\
	\end{array}
	\right)},
\end{equation*} 
such that the first eight entries of $ \textbf{C}_{1}^T\textbf{T}^T\textbf{S}\textbf{v}_{1} $ and $ \textbf{v}_{2}^T\textbf{T}\textbf{C}_{1} $ vanish, then parameter $ \gamma $ only appears in the corner of $ \textbf{C}_{1}^{T}\tfrac{1}{h_{K}}\overline{\mathbb{B}}_{K,\lambda}\textbf{C}_{1} $. According to Lemma~\ref{determine_relation}, substituting  $ s_{_1}=1/6 $ and $ s^{*}=0 $ into (\ref{part2}), we compute the determinants that
\begin{equation*}
\begin{split}
	\det\big(\textbf{C}_{1}^{T}\tfrac{1}{h_{K}}\overline{\mathbb{B}}_{K,\lambda}\textbf{C}_{1} (1:4,1:4)\big)&=c^4\varphi_1(s_{_2},s_{_3}),\\
	\det\big(\textbf{C}_{1}^{T}\tfrac{1}{h_{K}}\overline{\mathbb{B}}_{K,\lambda}\textbf{C}_{1} (1:5,1:5)\big)&=8c^5(s_{_2}-s_{_3})\varphi_1(s_{_2},s_{_3}),\\
	\det\big(\textbf{C}_{1}^{T}\tfrac{1}{h_{K}}\overline{\mathbb{B}}_{K,\lambda}\textbf{C}_{1} (1:6,1:6)\big)&=\dfrac{81c^6}{4}(s_{_2}-s_{_3})\varphi_1^2(s_{_2},s_{_3}),\\
	\det\big(\textbf{C}_{1}^{T}\tfrac{1}{h_{K}}\overline{\mathbb{B}}_{K,\lambda}\textbf{C}_{1} (1:7,1:7)\big)&=\dfrac{243c^7}{8}(s_{_2}-s_{_3})^2\varphi_1^2(s_{_2},s_{_3}),\\
	\det\big(\textbf{C}_{1}^{T}\tfrac{1}{h_{K}}\overline{\mathbb{B}}_{K,\lambda}\textbf{C}_{1} (1:8,1:8)\big)&=\dfrac{2187c^8}{32}(s_{_2}-s_{_3})^2\varphi_1^3(s_{_2},s_{_3}),\\
	\det\big(\textbf{C}_{1}^{T}\tfrac{1}{h_{K}}\overline{\mathbb{B}}_{K,\lambda}\textbf{C}_{1} (1:9,1:9)\big)&=\dfrac{729c^9}{1280}(s_{_2}-s_{_3})^2\varphi_1^3(s_{_2},s_{_3})\,\varphi_2(s_{_0},s_{_2},s_{_3}),
\end{split}
\end{equation*}
where  $ s_{_0},s_{_2},s_{_3}$ are given by (\ref{s_{_0}-3}), and
\begin{equation*}
\begin{split}
	\varphi_1(s_{_2},s_{_3})&=\dfrac{2}{27}\big(-3(2s_{2}+s_{3})^2+(2s_{_2}+5s_{_3})-\dfrac{1}{12} \big)\\
	&=\dfrac{2}{27}\big(-3(2t_{_3}+t_{_4})^{2}\lambda^2+(2t_{_3}+5t_{_4})\lambda-\dfrac{1}{12} \big),\\
	\varphi_2(s_{_0},s_{_2},s_{_3})&=-240s_{_0}-20s_{_2}-10s_{_3}+1=\dfrac{5}{3}\alpha\beta(3-4\gamma)\lambda.
\end{split}
\end{equation*}

It is clear that  $ s_{_2}-s_{_3}=(t_{_3}-t_{_4})\lambda=(\alpha\beta^2/8 - \alpha\beta/4 + 1/18 )\lambda$ and $ \varphi_2(s_{_0},s_{_2},s_{_3}) $ have the same signs with $ \lambda $ for any $ \alpha \in (0,1/2)$, $ \beta\in (0,2/3)$ and $ \gamma \in (0,3/4) $. Therefore, $ \tfrac{1}{h_{K}}\overline{\mathbb{B}}_{K,\lambda} $ is positive definite if and only if both $ \varphi_1(s_{_2},s_{_3})$ and $ \lambda $ are positive. A straight calculation of $ \varphi_1(s_{_2},s_{_3})>0$ yields (\ref{p}), which implies $ \lambda>0 $.  In fact, since $ 2t_{_4}$, $ 2t_{_3}+3t_{_4} $ and $ 2t_{_3}+t_{_4} $ are all positive for $ \alpha \in (0,1/2)$ and $ \beta\in (0,2/3)$, we have $ (2t_{_3}+5t_{_4})-2\sqrt{2t_{_4}(2t_{_3}+3t_{_4})}>0 $ by considering $ 2t_{_4}$  and $ 2t_{_3}+3t_{_4}$ as values of $ x $ and $ y $ in  $ x+y-2\sqrt{xy}>0$  $ (y>x>0)$.
 \end{proof}
\begin{remark}
$ \lambda=1/(12t_{_3}+6t_{_4})=1/(1-3\alpha\beta) $ and $ \lambda=1 $ are always in the range (\ref{p}). 
%  Moreover, if $ \alpha,\beta $ satisfy the restriction (\ref{orthogonal_plane_para_t}), with (\ref{range:ab}), one can see $ \lambda=\dfrac{1}{1-3\alpha\beta}\in \left(  1.176111,\frac{\sqrt{6}}{2}\approx1.224744\right)  $.
\end{remark}

For any general tetrahedon $ K\in \mathcal{T}_{h}$, we complete a reduction of $\tfrac{1}{h_{K}}\overline{\mathbb{B}}_{K,\lambda}$ under the orthogonal condition on the surface (\ref{orthogonal_plane_para_t}). Our initial idea is to construct a $ 3\times3 $ block transformation matrix $ \textbf{C}_{2} $ for the $ 3\times3 $ block target matix $\mathbb{B}_{K,\lambda}$, such that the first block row and column of $\mathbb{B}_{K,\lambda}$ act on the other two block rows and columns. We mainly focus on these changes of the elements in the  first block row and column of $\mathbb{B}_{K,\lambda}$. Consider a transformation matrix depending on two parameters $ \eta_{1},\eta_{2} $ as follows

\begin{equation*}
\textbf{C}_{2}=
\left(\begin{array}{c;{3pt/1pt}cc} 
\mathbb{E}_{_{3}}&\mathbb{0}_{_{3}}&\mathbb{0}_{_{3}} \\  
\hdashline[3pt/1pt]
\eta_{1}\mathbb{E}_{_{3}} & \multicolumn{2}{c}{\multirow{2}{*}{$ \mathbb{E}_{_{6}}$}}  \\
\eta_{2}\textbf{C}_{_{3\!\times\!3}}& 
\end{array}
\right).
\end{equation*}
By (\ref{B_0}), the corresponding congruent matrix of $\mathbb{B}_{K,\lambda}$ is
\begin{equation}\label{trans_B}
\textbf{C}_{2}\mathbb{B}_{K,\lambda}\textbf{C}_{2}^{T}=\textbf{C}_{2}\textbf{T}^T\textbf{S}\textbf{A}\textbf{T}\textbf{C}_{2}^{T}-\textbf{C}_{2}\textbf{T}^T\textbf{S}\textbf{v}_{1}\textbf{v}_{2}^T\textbf{T}\textbf{C}_{2}^{T}.
\end{equation}
Noticing the first three elements of $ \textbf{T}^T\textbf{S}\textbf{v}_{1} $ and $ \textbf{v}_{2}^T\textbf{T} $ in (\ref{part1})  are all zero, we have
\begin{flalign*}
\begin{split}
\textbf{C}_{2}\textbf{T}^T\textbf{S}\textbf{v}_{1}\textbf{v}_{2}^T\textbf{T}\textbf{C}_{2}^{T}
=(\textbf{T}^T\textbf{S}\textbf{v}_{1})\big(\textbf{v}_{2}^T\textbf{T}\big)=
s_{_0}\left(\begin{array}{c;{3pt/1pt}cc} 
	\mathbb{0}_{_{3}} &\mathbb{0}_{_{3}}&\mathbb{0}_{_{3}} \\ \hdashline[3pt/1pt]
	\mathbb{0}_{_{3}}&6\textbf{J}^{(1)}_{_{3\!\times\!3}}&-6\textbf{J}^{(2)}_{_{3\!\times\!3}} \\[0.1cm]
	\mathbb{0}_{_{3}}&-2\textbf{J}^{(1)}_{_{3\!\times\!3}}&2\textbf{J}^{(2)}_{_{3\!\times\!3}}
\end{array}
\right),
\end{split}&
\end{flalign*}
where the symbolic matrices $\textbf{J}^{(1)}_{_{3\!\times\!3}} $, $ \textbf{J}^{(2)}_{_{3\!\times\!3}} $ can be found in Appendix A.2.  
By (\ref{part2}), we have
\begin{flalign*}
\begin{split}
\textbf{C}_{2}\textbf{T}^T\textbf{S}\textbf{A}\textbf{T}\textbf{C}_{2}^{T}=
\left({\small \begin{array}{c;{3pt/1pt}cc} 
		s_{_1}\textbf{M}^{K}_{_{3\!\times\!3}} &(\frac{1}{2}+\eta_{1})s_{_1}\textbf{M}^{K}_{_{3\!\times\!3}}\!+\!\frac{s^{*}}{2}\textbf{Q}^{(1)}_{_{3\!\times\!3}}&(\frac{1}{4}+\eta_{2})s_{_1}(\textbf{M}^{K}_{_{3\!\times\!3}}\textbf{C}_{_{3\!\times\!3}})\!+\!\frac{s^{*}}{4}\textbf{Q}^{(2)}_{_{3\!\times\!3}}  \\[0.2cm] \hdashline[3pt/1pt]
		\big((1+\eta_{1})s_{_1}\!-\!\frac{2s_{_2}+s_{_3}}{2}\big)\textbf{M}^{K}_{_{3\!\times\!3}} & \tilde{\textbf{L}}^{(1)}_{_{3\!\times\!3}}&\tilde{\textbf{L}}^{(2)}_{_{3\!\times\!3}} \\[0.2cm]
		(\eta_{2}s_{_1}\!+\!\frac{2s_{_2}+s_{_3}}{4})(\textbf{C}_{_{3\!\times\!3}}\textbf{M}^{K}_{_{3\!\times\!3}}) & \tilde{\textbf{L}}^{(3)}_{_{3\!\times\!3}}&\tilde{\textbf{L}}^{(4)}_{_{3\!\times\!3}} 
\end{array}}
\right)
\end{split}&
\end{flalign*}
with
\begin{flalign*}
\begin{split} &{\small\tilde{\textbf{L}}^{(1)}_{_{3\!\times\!3}}=\textbf{L}^{(1)}_{_{3\!\times\!3}}+\left( (\tfrac{3}{2}+\eta_{1})s_{_1}-\tfrac{2s_{_2}+s_{_3}}{2}\right) \eta_{1}\textbf{M}^{K}_{_{3\!\times\!3}}+\tfrac{s^{*}}{2}\eta_{1}\textbf{Q}^{(1)}_{_{3\!\times\!3}}},\\
&{\small \tilde{\textbf{L}}^{(2)}_{_{3\!\times\!3}}=\textbf{L}^{(2)}_{_{3\!\times\!3}}+\left(\tfrac{s_{_1}}{4}\eta_{1}+\big((1+\eta_{1})s_{_1}-\tfrac{2s_{_2}+s_{_3}}{2}\big)\eta_{2}\right)(\textbf{M}^{K}_{_{3\!\times\!3}}\textbf{C}_{_{3\!\times\!3}})+\tfrac{s^{*}}{4}\eta_{1}\textbf{Q}^{(2)}_{_{3\!\times\!3}}},\\
&{\small \tilde{\textbf{L}}^{(3)}_{_{3\!\times\!3}}=\textbf{L}^{(3)}_{_{3\!\times\!3}}+\left(\tfrac{s_{_1}}{2}\eta_{2}+(\eta_{2}s_{_1}+\tfrac{2s_{_2}+s_{_3}}{4})\eta_{1}\right)(\textbf{C}_{_{3\!\times\!3}}\textbf{M}^{K}_{_{3\!\times\!3}})+\tfrac{s^{*}}{2}\eta_{2}(\textbf{C}_{_{3\!\times\!3}}\textbf{Q}^{(1)}_{_{3\!\times\!3}})},\\
&{\small \tilde{\textbf{L}}^{(4)}_{_{3\!\times\!3}}=\textbf{L}^{(4)}_{_{3\!\times\!3}}+\left(\tfrac{s_{_1}}{4}\eta_{2}+(\eta_{2}s_{_1}+\tfrac{2s_{_2}+s_{_3}}{4})\eta_{2}\right)(\textbf{C}_{_{3\!\times\!3}}\textbf{M}^{K}_{_{3\!\times\!3}}\textbf{C}_{_{3\!\times\!3}})+\tfrac{s^{*}}{4}\eta_{2}(\textbf{C}_{_{3\!\times\!3}}\textbf{Q}^{(2)}_{_{3\!\times\!3}})}.
\end{split}
\end{flalign*}
Then, by (\ref{trans_B}), we obtain
\begin{equation*}
\textbf{C}_{2}\tfrac{1}{h_{K}}\overline{\mathbb{B}}_{K,\lambda}	\textbf{C}_{2}^{T}=\tfrac{1}{2h_{K}}\textbf{C}_{2}(\mathbb{B}_{K,\lambda}+\mathbb{B}_{K,\lambda}^{T})\textbf{C}_{2}^{T}=\\
\tfrac{1}{h_{K}} \left(\begin{array}{c;{3pt/1pt}cc} 
s_{_1}\textbf{M}^{K}_{_{3\!\times\!3}} & (\star)_{_{1}}& (\star)_{_{2}} \\ \hdashline[3pt/1pt]
(\star)^{T}_{1} &\multicolumn{2}{c}{\multirow{2}{*}{$ \textbf{N}^{K,\lambda}_{_{6\!\times\!6}}$}}\\
(\star)^{T}_{2}& 
\end{array}
\right),
\end{equation*}
where 
\begin{gather} {\small (\star)_{_{1}}=\big((\dfrac{3}{4}+\eta_{1})s_{_1}\!-\!\dfrac{2s_{_2}+s_{_3}}{4}\big)\textbf{M}^{K}_{_{3\!\times\!3}}\!+\!\dfrac{s^{*}}{4}\textbf{Q}^{(1)}_{_{3\!\times\!3}}},\nonumber\\ {\small (\star)_{_{2}}=\big((\dfrac{1}{8}+\eta_{2})s_{_1}\!+\!\dfrac{2s_{_2}+s_{_3}}{8}\big)(\textbf{M}^{K}_{_{3\!\times\!3}}\textbf{C}_{_{3\!\times\!3}})\!+\!\dfrac{s^{*}}{8}\textbf{Q}^{(2)}_{_{3\!\times\!3}}},\nonumber
\end{gather}
and
\begin{gather}
\textbf{N}^{K,\lambda}_{_{6\!\times\!6}}={\small \frac{1}{2} \left(\begin{array}{cc} 
	(\tilde{\textbf{L}}^{(1)}_{_{3\!\times\!3}}-6s_{_0}\textbf{J}^{(1)}_{_{3\!\times\!3}})+(\tilde{\textbf{L}}^{(1)}_{_{3\!\times\!3}}-6s_{_0}\textbf{J}^{(1)}_{_{3\!\times\!3}})^{T} & (\tilde{\textbf{L}}^{(2)}_{_{3\!\times\!3}}+6s_{_0}\textbf{J}^{(2)}_{_{3\!\times\!3}})+(\tilde{\textbf{L}}^{(3)}_{_{3\!\times\!3}}+2s_{_0}\textbf{J}^{(1)}_{_{3\!\times\!3}})^{T} \\[2mm] 
	(\tilde{\textbf{L}}^{(3)}_{_{3\!\times\!3}}+2s_{_0}\textbf{J}^{(1)}_{_{3\!\times\!3}})+(\tilde{\textbf{L}}^{(2)}_{_{3\!\times\!3}}+6s_{_0}\textbf{J}^{(2)}_{_{3\!\times\!3}})^{T}&(\tilde{\textbf{L}}^{(4)}_{_{3\!\times\!3}}-2s_{_0}\textbf{J}^{(2)}_{_{3\!\times\!3}})+(\tilde{\textbf{L}}^{(4)}_{_{3\!\times\!3}}-2s_{_0}\textbf{J}^{(2)}_{_{3\!\times\!3}})^{T}\\
\end{array}
\right)}.\label{N}
\end{gather}
Thanks to the orthogonal condition on the surface (\ref{orthogonal_plane_para_t}) which means $ s^*=0 $, and meanwhile, choosing parameters $\eta_{1},\eta_{2}$ as follows
\begin{equation*}
\eta_{1}=\dfrac{2s_{_2}+s_{_3}}{4s_{_1}}-\dfrac{3}{4},\qquad\eta_{2}=-\dfrac{2s_{_2}+s_{_3}}{8s_{_1}}-\dfrac{1}{8},
\end{equation*}
then we have $ (\star)_{_{1}}=\mathbb{0}_{_{3}} $ and $ (\star)_{_{2}}=\mathbb{0}_{_{3}} $. Thus, $ \frac{1}{h_{K}}\overline{\mathbb{B}}_{K,\lambda} $ is reduced to a block diagonal matrix. According to Lemma~\ref{determine_relation},  a positive definiteness of  $ \frac{1}{h_{K}}\overline{\mathbb{B}}_{K,\lambda} $ is reducible to a positive definiteness of  $ \frac{1}{h_{K}}\textbf{N}^{K,\lambda}_{_{6\!\times\!6}} $, which is proved in the next subsection.
\subsection{A restriction on $\mathcal{T}_{h}$ and the stability}\label{subsection:restriction3} 

In this subsection, the minimum V-angle condition on  $ \mathcal{T}_{h}$ is proposed to ensure the positive definiteness of $ \frac{1}{h_{K}}\textbf{N}^{K,\lambda}_{_{6\!\times\!6}} $ numerically, furthermore, the stability of the quadratic FVM schemes is presented.

\vspace{2mm}
For simplicity, write all plane angles in $ K=\bigtriangleup^{4} P_{1}P_{2}P_{3}P_{4}  $ (see Fig.~\ref{fig:tetrahedron}) as
\begin{equation}\label{definition:plane angles}
\left\{\begin{array}{lcl}
\theta_{_{1,P_{1}}}=\angle P_2P_1P_4,&\,\, \theta_{_{2,P_{1}}}=\angle P_2P_1P_3,&\,\,  \theta_{_{3,P_{1}}}=\angle P_3P_1P_4,\\
\theta_{_{1,P_{2}}}=\angle P_1P_2P_4,&\,\, \theta_{_{2,P_{2}}}=\angle P_1P_2P_3,&\,\, \theta_{_{3,P_{2}}}=\angle P_3P_2P_4,\\
\theta_{_{1,P_{3}}}=\angle P_1P_3P_4,&\,\,  \theta_{_{2,P_{3}}}=\angle P_1P_3P_2,&\,\,  \theta_{_{3,P_{3}}}=\angle P_2P_3P_4,\\
\theta_{_{1,P_{4}}}=\angle P_1P_4P_2,&\,\,  \theta_{_{2,P_{4}}}=\angle P_1P_4P_3,&\,\,  \theta_{_{3,P_{4}}}=\angle P_2P_4P_3.
\end{array} \right.
\end{equation}
Firstly, we define the $``$V-angle$"$ at the vertex $P_{i}  $ of $ K $ as 
\begin{equation}\label{definition:vertex angles}
\theta_{_{P_{i}}}=\theta_{_{1,P_{i}}}+\theta_{_{2,P_{i}}}+\theta_{_{3,P_{i}}}-2\max\left\lbrace \theta_{_{1,P_{i}}},\theta_{_{2,P_{i}}},\theta_{_{3,P_{i}}}\right\rbrace, \quad i\in\mathcal{Z}_{4}^{(1)}.
\end{equation} 
In fact, $ \theta_{_{P_{i}}} $ is the difference between the sum of two smaller plane angles and the largest one at vertex $ P_{i} $. Let $ \theta_K:=\min\{\theta_{_{P_{i}}},i\in\mathcal{Z}_{4}^{(1)}\} $, the following Lemma~\ref{lemma:theta_K} shows the properties of $ \theta_K$.
\begin{lemma}\label{lemma:theta_K}
The minimum V-angle $ \theta_K$ has the following properties
\begin{itemize}
\item[(i)] $ \theta_{_{i_{1},P_{i_{2}}}}\geq\theta_K\qquad\forall i_{1}\in\mathcal{Z}_{3}^{(1)},\,i_{2}\in\mathcal{Z}_{4}^{(1)}  $;
\item[(ii)] $ 0^{\circ}<\theta_{K}\leq60^{\circ}  $, and $\theta_{K}=60^{\circ} $ means that $ K $ is a regular tetrahedron.
\end{itemize}
\end{lemma}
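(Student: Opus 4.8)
The plan is to settle the two claims with elementary angle inequalities, drawing on only two geometric facts about a nondegenerate tetrahedron: the three face angles meeting at a vertex are the side lengths of a spherical triangle, and the twelve face angles are distributed among the four triangular faces.

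For claim (i), I would fix a vertex $P_{i_2}$ and relabel its three plane angles as $a\le b\le c$. By definition (\ref{definition:vertex angles}), $\theta_{P_{i_2}}=a+b-c$, so it suffices to check $a,b,c\ge a+b-c$: the inequalities $a\ge a+b-c$ and $b\ge a+b-c$ reduce to $c\ge b$ and $c\ge a$, while $c\ge a+b-c$ reduces to $2c\ge a+b$, and all three hold because $c$ is the largest. Hence every plane angle at $P_{i_2}$ is at least $\theta_{P_{i_2}}$, and since $\theta_{P_{i_2}}\ge\theta_K$ by the definition of $\theta_K$ as a minimum, claim (i) follows. As a byproduct I also record $\theta_{P_{i_2}}=a+(b-c)\le a$, i.e.\ $\theta_{P_{i_2}}$ never exceeds the smallest plane angle at $P_{i_2}$; this drives the upper bound in (ii).

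For the lower bound in (ii), the key input is that the three plane angles $\theta_{1,P_i},\theta_{2,P_i},\theta_{3,P_i}$ at a vertex $P_i$ are exactly the arc lengths of the spherical triangle cut out by the trihedral angle at $P_i$ on a small sphere centered there. Nondegeneracy of $K$ makes this spherical triangle nondegenerate, so the strict spherical triangle inequality applies: the largest of the three angles is strictly less than the sum of the other two, which is precisely $\theta_{P_i}>0$; taking the minimum over $i\in\mathcal{Z}_4^{(1)}$ gives $\theta_K>0^\circ$. For the upper bound, I would note that the twelve plane angles are the interior angles of the four triangular faces, so their total is $4\times180^\circ=720^\circ$ and their average is $60^\circ$; in particular the smallest of all twelve is at most $60^\circ$. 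Combining this with the byproduct inequality $\theta_{P_i}\le(\text{smallest plane angle at }P_i)$ gives $\theta_K=\min_i\theta_{P_i}\le\min_i(\text{smallest angle at }P_i)=(\text{global smallest plane angle})\le60^\circ$.

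For the equality case, if $\theta_K=60^\circ$ then the chain of inequalities above must collapse: the global smallest plane angle equals $60^\circ$, which coincides with the average, forcing all twelve plane angles to equal $60^\circ$. Thus every face is equilateral, and matching the common edge lengths of adjacent equilateral faces shows all six edges are equal, so $K$ is regular; the converse is immediate. The main obstacle is not the algebra but isolating the right geometric fact for the lower bound---the spherical triangle inequality for the trihedral angle at each vertex---which should already be available among the tetrahedral relations collected in Appendix B.
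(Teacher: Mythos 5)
Your proof is correct and follows essentially the same route as the paper: claim (i) via the observation that $\theta_{P_{i_2}}$ is bounded above by the smallest plane angle at $P_{i_2}$, and the upper bound in (ii) from the $180^\circ$ angle sum of a triangular face, with the equality case forcing all twelve plane angles to equal $60^\circ$ and hence $K$ to be regular. The only differences are cosmetic: you justify $\theta_K>0^\circ$ explicitly via the strict spherical triangle inequality for the trihedral angle at each vertex (the paper simply asserts this step as clear, and Appendix B does not in fact record that inequality), and you obtain $\theta_K\le 60^\circ$ by averaging the twelve plane angles rather than by the paper's contradiction argument; both are sound.
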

\begin{proof}
Combining  $ \theta_{_{1,P_{i}}}\!+\theta_{_{2,P_{i}}}\!+\theta_{_{3,P_{i}}}\leq 2\max\left\lbrace  \theta_{_{1,P_{i}}},\theta_{_{2,P_{i}}},\theta_{_{3,P_{i}}}\right\rbrace+\min\left\lbrace \theta_{_{1,P_{i}}},\theta_{_{2,P_{i}}},\theta_{_{3,P_{i}}}\right\rbrace $  $ (i\in\mathcal{Z}_{4}^{(1)}) $ and (\ref{definition:vertex angles}),  we have $ \theta_{_{P_{i}}}\leq \min\left\lbrace \theta_{_{1,P_{i}}},\theta_{_{2,P_{i}}},\theta_{_{3,P_{i}}}\right\rbrace $ $(i\in\mathcal{Z}_{4}^{(1)}) $, then the first property follows from $ \theta_K=\min\{\theta_{_{P_{i}}},i\in\mathcal{Z}_{4}^{(1)}\} $. It is clear that $ \theta_{K}>0^{\circ}$, then we prove $ \theta_{K}\leq60^{\circ} $. If not, every plane angle of $ K $ is larger than $ 60^{\circ} $, which contradicts the fact that the sum of the three interior angles of a triangle is $ 180^{\circ} $. Moreover, if $ \theta_{K}=60^{\circ} $, then all the plane angles in $ K $  equal to $ 60^{\circ} $, this means that $ K $ is  a regular tetrahedron . 
 \end{proof}

\begin{remark}\label{bad restriction}
The poorly-shaped tetrahedrons classified in \cite{Liu1994} possess at least one of the following two types of local shapes around a vertex.
i) See Fig.~\ref{fig:local_shape_1} , the three plane angles around a vertex are all small, the local shape of which is performed as $``sharp"$; ii) See Fig.~\ref{fig:local_shape_2},  there exists an edge  that is  close to the angle (not small) formed by the other two edges around a vertex, the local shape of which is performed as $``flat"$.
The minimum V-angle $ \theta_K $ bounded below ensures the shape regularity of the tetrahedron $K$ by controlling the local shapes at each vertex. However, if we only 
restrict the minimum plane angle for each triangular face of a tetrahedron $ K $, similar to \cite{ChenZY2012,XuJC2009,ZhouYH2020,ZhouYH2020_2}  for triangular meshes in 2D, then we can not guarantee the shape regularity of the tetrahedron $ K $. See Fig.~\ref{fig:terrible_tetrahedron} as an example, the minimum plane angle of a degenerated tetrahedron $ K $ is $ 45^{\circ} $, which is big enough in $ (0^{\circ}, 60^{\circ}] $.
\end{remark} 
%\begin{figure}[ht!]
%	\centering
%	\subfigure[The local shape at a vertex is $``sharp"$]{
%		\includegraphics[width=25pt]{./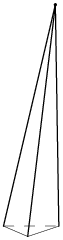}
%	}
%	\subfigure[The local shape at a vertex is $``flat"$]{
%		\includegraphics[width=55pt]{./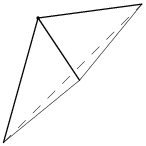}
%	}
%\end{figure}   %ctrl+T(U)
%\begin{figure}[ht!]
%	\centering
%	\begin{minipage}[t]{.45\textwidth}
%		\centering
%		\includegraphics[width=20pt]{./local_shape_1.eps}
%		\caption{The local shape at a vertex is $``sharp"$.}
%		\label{fig:local_shape_1}
%	\end{minipage}
%	\begin{minipage}[t]{.45\textwidth}
%		\centering
%		\includegraphics[width=55pt]{./local_shape_2.eps}
%		\caption{The local shape at a vertex is $``flat"$.}
%		\label{fig:local_shape_2}
%	\end{minipage}
%\end{figure}   %ctrl+T(U)
\begin{figure}[ht!]


\centering
\subfigure[The local shape around a vertex is $``sharp"$.]{
\begin{minipage}[t]{.45\textwidth}
	\centering
	\includegraphics[width=17pt]{./local_shape_1.eps}
	\label{fig:local_shape_1}
\end{minipage}}
\subfigure[The local shape around a vertex is $``flat"$.]{\begin{minipage}[t]{.45\textwidth}
	\centering
	\includegraphics[width=50pt]{./local_shape_2.eps}
	\label{fig:local_shape_2}
\end{minipage}}
\caption{Illustration of two types of local shapes.}
\end{figure}   %ctrl+T(U)
\begin{figure}[ht!]
\centering
\begin{minipage}[t]{.7\textwidth}
\centering
\includegraphics[width=40pt]{./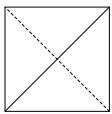}
\caption{A degenerated tetrahedron $ K $ whose four vertices coinciding with the vertices of a square.}
\label{fig:terrible_tetrahedron}
\end{minipage}
\end{figure}

Let  $ \Theta_{5}:=\left(  \theta_{_{1,P_{1}}},\theta_{_{2,P_{1}}},\theta_{_{1,P_{2}}},\theta_{_{2,P_{2}}},\theta_{_{3,P_{2}}} \right)  $, five plane angles of $K$. Lemma~\ref{lemma:indentity} implies that a tetrahedon $ K $ is determined by its circumradius $ R_{K} $ and $ \Theta_{5} $. The proof is included in Appendix B.3.

\begin{lemma}\label{lemma:indentity}
All plane angles $ \theta_{i_1,P_{i_2}}$  $ (i_1\in\mathcal{Z}_{3}^{(1)},i_2\in\mathcal{Z}_{4}^{(1)}) $ in (\ref{definition:plane angles}) and dihedral angles $ \theta_{jk} $ $ \big( (j,k)\in\mathcal{Z}_{4}^{(2)} \big) $ in a tetrahedron $ K $ can be represented by the five plane angles $ \Theta_{5} $.
\end{lemma}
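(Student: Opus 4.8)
The shape of a tetrahedron has exactly five similarity invariants (five independent edge-length ratios), so one expects any five suitably chosen plane angles to determine $K$ up to similarity, and hence to determine every remaining plane and dihedral angle. The plan is to make this explicit by a bootstrapping procedure that computes all twelve plane angles of (\ref{definition:plane angles}) and all six dihedral angles $\theta_{jk}$ from $\Theta_5$. The main tool is spherical trigonometry at each vertex: at a vertex $P_i$ the unit vectors toward the other three vertices span a spherical triangle whose sides are the three plane angles of $K$ at $P_i$ and whose angles are the dihedral angles $\theta_{jk}$ along the three edges through $P_i$. The spherical law of cosines therefore relates, at each vertex, the plane angle opposite an edge to the two plane angles adjacent to that edge and to the dihedral angle along it:
\begin{equation*}
\cos\theta_{\mathrm{opp}}=\cos\theta_{\mathrm{adj},1}\cos\theta_{\mathrm{adj},2}+\sin\theta_{\mathrm{adj},1}\sin\theta_{\mathrm{adj},2}\cos\theta_{\mathrm{edge}}.
\end{equation*}

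First I would exhaust the planar constraints on the two faces carrying the entries of $\Theta_5$: the angle sum on $T_4=\triangle P_1P_2P_3$ gives $\theta_{2,P_3}=\pi-\theta_{2,P_1}-\theta_{2,P_2}$, and the angle sum on $T_3=\triangle P_1P_2P_4$ gives $\theta_{1,P_4}=\pi-\theta_{1,P_1}-\theta_{1,P_2}$. Next, at the trihedral angle at $P_2$ all three plane angles $\theta_{1,P_2},\theta_{2,P_2},\theta_{3,P_2}$ belong to $\Theta_5$, so the master relation (read as an equation for the dihedral angle) immediately yields the three dihedral angles along the edges $\overline{P_1P_2},\overline{P_2P_3},\overline{P_2P_4}$, that is $\theta_{12},\theta_{23},\theta_{24}$; for instance
\begin{equation*}
\cos\theta_{12}=\frac{\cos\theta_{3,P_2}-\cos\theta_{1,P_2}\cos\theta_{2,P_2}}{\sin\theta_{1,P_2}\sin\theta_{2,P_2}}.
\end{equation*}

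Then I would move to the trihedral angle at $P_1$. There the two plane angles $\theta_{1,P_1},\theta_{2,P_1}$ are known and the dihedral angle $\theta_{12}$ along $\overline{P_1P_2}$ has just been computed; the crucial point is that the dihedral along an edge is a property of the edge, shared by the trihedral angles at both of its endpoints, so the value of $\theta_{12}$ obtained at $P_2$ carries over to $P_1$. Reading the master relation now as an equation for the \emph{opposite} plane angle produces $\theta_{3,P_1}$, and with all three plane angles at $P_1$ in hand the remaining two dihedral angles $\theta_{13},\theta_{14}$ follow exactly as in the previous step. Finally I would solve the spherical triangle at $P_3$: the plane angle $\theta_{2,P_3}$ together with the two adjacent dihedral angles $\theta_{13},\theta_{23}$ (along $\overline{P_1P_3},\overline{P_2P_3}$) forms an angle--side--angle configuration, which determines the last dihedral angle $\theta_{34}$ and the two plane angles $\theta_{1,P_3},\theta_{3,P_3}$. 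The two still-missing plane angles are then closed out by the angle sums on the remaining faces, namely $\theta_{2,P_4}=\pi-\theta_{3,P_1}-\theta_{1,P_3}$ on $T_2=\triangle P_1P_3P_4$ and $\theta_{3,P_4}=\pi-\theta_{3,P_2}-\theta_{3,P_3}$ on $T_1=\triangle P_2P_3P_4$.

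The step I expect to demand the most care is not any single computation but the bookkeeping of the incidence relations --- which plane angle is opposite which edge at which vertex, and which dihedral angle is shared between two trihedral angles --- since an error there propagates through the whole chain. There is also a well-definedness point worth recording: because every plane angle and every dihedral angle of a non-degenerate tetrahedron lies in $(0,\pi)$, on which cosine is strictly monotone, each cosine equation above pins down its angle uniquely, and the sines in the denominators are strictly positive, so no ambiguity or division by zero arises. Ensuring that the bootstrapping is ordered so that each spherical-triangle relation is invoked only after its input data have been secured --- in particular that the angle--side--angle step at $P_3$ has exactly the right datum available --- completes the argument.
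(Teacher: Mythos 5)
Your proposal is correct and takes essentially the same route as the paper's proof: both bootstrap from the planar angle sums on the two faces carrying $\Theta_{5}$, then propagate through the tetrahedron using the vertex relations (\ref{angle1}) and (\ref{angle2}) of Lemma~\ref{lemma:angles}, which are precisely your spherical law of cosines (read in both directions, SAS and its inverse) and your angle--side--angle solution formula. The only differences are cosmetic: you solve the ASA configuration at $P_{3}$ whereas the paper applies (\ref{angle2}) at $P_{4}$ and closes the remaining faces symmetrically, and you phrase the vertex identities in the language of spherical trigonometry where the paper derives the identical formulas by vector computations.
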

According to Lemma~\ref{lemma:angles}, Lemma~\ref{lemma:L-R} listed in Appendix B.1 and Lemma~\ref{lemma:indentity}, each of $ r_{jk}$  $ \big((j,k)\in\mathcal{Z}_{4}^{(2)}\big) $ defined  in (\ref{def:rij}) can be represented by $ R_{K} $ multiplying by a continuous function of $ \Theta_{5} $. Taking $ r_{12} $ as an example, we have
\begin{equation*}
{\small r_{_{\!12}}=R_{K} \dfrac{2\cos\theta_{12}}{\sqrt{(1-\cos^2\theta_{12})+\cot^2\theta_{2,P_3}+\cot^2\theta_{1,P_4}-2\,\cot\theta_{2,P_3}\cot\theta_{1,P_4}\cos\theta_{12}}}},
\end{equation*}
and
\begin{gather*}
{\small \theta_{_{1,P_{4}}}=180^{\circ}-\theta_{_{1,P_{1}}}-\theta_{_{1,P_{2}}}},\quad
{\small \theta_{_{2,P_{3}}}=180^{\circ}-\theta_{_{2,P_{1}}}-\theta_{_{2,P_{2}}}},\\
{\small \cos\theta_{12}=\dfrac{\cos\theta_{_{3,P_{2}}}-\cos \theta_{_{1,P_{2}}}\cos \theta_{_{2,P_{2}}}}{\sin \theta_{_{1,P_{2}}}\sin \theta_{_{2,P_{2}}}}}.
\end{gather*}
For simplicity, the similar
representations of other $ r_{jk}$ $ \big( (j,k)\in\mathcal{Z}_{4}^{(2)} \big) $ are omitted here.

Obviously, every entry of $\textbf{N}^{K,\lambda}_{_{6\!\times\!6}}$ in (\ref{N}) is a linear combination of $r_{jk}$ $ \big((j,k)\in\mathcal{Z}_{4}^{(2)}\big)$. Thus, $\frac{1}{h_{K}}\textbf{N}^{K,\lambda}_{_{6\!\times\!6}}$ can be rewritten as $ \frac{R_{K}}{h_{K}}\widetilde{\textbf{N}}_{6\times6}(\Theta_{5},\alpha,\beta,\gamma,\lambda) $.
%It is proved in Lemma~\ref{lemma:indentity} that $ r_{jk}$  $\big((j,k)\!\in\!\mathcal{Z}_{4}^{(2)}\big)$  defined in (\ref{def:rij}) can be represented by the circumradius $ R_{K} $  and the five plane angles $ \Theta_{5}:=\left(  \theta_{_{1,P_{1}}},\theta_{_{2,P_{1}}},\theta_{_{1,P_{2}}},\theta_{_{2,P_{2}}},\theta_{_{3,P_{2}}} \right)  $ of $K$. The proof is included in Appendix~\ref{Apsubsection:3}. 
%\begin{lemma}\label{lemma:indentity}
%	For $ r_{jk}$   $\big((j,k)\!\in\!\mathcal{Z}_{4}^{(2)}\big)$ given by (\ref{def:rij}), we have
%	\begin{equation}\label{replace:rjk}
%	r_{jk}=R_{K}\,\zeta_{jk}(\Theta_{5}) \qquad\forall(j,k)\in\mathcal{Z}_{4}^{(2)},
%	\end{equation}
%	where  $ \zeta_{jk}(\Theta_{5})$ $ \big(\textstyle(j,k)\!\in\!\mathcal{Z}_{4}^{(2)}\big)$ are some continuous functions.
%\end{lemma}
We turn to discuss the positive definiteness of $\widetilde{\textbf{N}}_{6\times6}(\Theta_{5},\alpha,\beta,\gamma,\lambda) $ under a regular partition $ \mathcal{T}_{h} $. 
%Subsequently,  a restriction on any tetrahedral element $ K $ is 
%introduced to ensure the inequality $ \det\big( \widetilde{\textbf{N}}_{6\times6}(\Theta_{5},\alpha,\beta,\gamma,\lambda)\big)>0 $.
Subsequently, for any fixed  group  of parameters $ (\alpha,\beta,\gamma,\lambda)$, we   restrict the lower bound of the minimum V-angle $ \theta_{K} $ to ensure the positiveness of $ \det\big( \widetilde{\textbf{N}}_{6\times6}(\Theta_{5},\alpha,\beta,\gamma,\lambda)\big)$.

Denote the reasonable range of $ \Theta_{5} $ for a tetrahedron $ K $ satisfying $ \theta_K \geq v$ by $ \mathbb{Q}_{v} $ as below

\begin{equation*}
\begin{split}
\mathbb{Q}_{v}\!=\!\left\lbrace \Theta_{5}>0^{\circ}\big| \theta_{_{1,P_{1}}}\!\!\!+\!\theta_{_{1,P_{2}}}\!\!<\!180^{\circ};\,
\theta_{_{2,P_{1}}}\!\!\!+\!\theta_{_{2,P_{2}}}\!\!<\!180^{\circ};\,
\theta_{_{1,P_{2}}}\!\!\!+\!\theta_{_{2,P_{2}}}\!\!\!+\!\theta_{_{3,P_{2}}}\!\!<\!360^{\circ};\,\theta_{_{P_{i}}}\geq v,i\in\mathcal{Z}_{4}^{(1)}\right\rbrace.
\end{split}
\end{equation*}
By Lemma~\ref{lemma:indentity}, $ \theta_{_{P_{i}}}\geq v$  $(i\in\mathcal{Z}_{4}^{(1)}) $ means some relations between $ \Theta_{5} $ and $ v $. In addition, Lemma~\ref{lemma:theta_K} indicates that each angle of $ \Theta_{5}\in \mathbb{Q}_{v} $ lies in  $ [v,180^{\circ}-2v] $. 
For parameters $ (\alpha,\beta,\gamma,\lambda)$ fixed by
(\ref{parameters}), (\ref{orthogonal_plane_para}) and (\ref{p}), we  define an angle set as
\begin{equation}\label{def:V*}
V^{*}(\alpha,\beta,\gamma,\lambda)=\{v \in\left(0^{\circ},60^{\circ} \right]\, \big|\, \det\big( \widetilde{\textbf{N}}_{6\times6}(\Theta_{5},\alpha,\beta,\gamma,\lambda)\big)>0\,\,\,\,
\forall \Theta_{5}\in \mathbb{Q}_{v} \}.
\end{equation}
Note that $V^{*}(\alpha,\beta,\gamma,\lambda)$ is nonempty since Lemma~\ref{Lemma:regular tetra} shows $ 60^{\circ}\in V^{*}(\alpha,\beta,\gamma,\lambda)$.
Let 
$ v^{*}(\alpha,\beta,\gamma,\lambda)=\inf\, V^{*}(\alpha,\beta,\gamma,\lambda) $, the following restriction on the primary mesh $ \mathcal{T}_{h} $ plays an important role for stability analysis.
\begin{definition}[minimum V-angle condition]
A quadratic FVM scheme or the corresponding primary mesh $ \mathcal{T}_{h} $ is called to satisfy the minimum V-angle condition, if there exist $ \varepsilon^{*}>0 $ and $ \lambda $ in the range (\ref{p}) such that 
\begin{flalign} \label{restriction_3}
\theta_{K}\geq v^{*}(\alpha,\beta,\gamma,\lambda)+\varepsilon^{*}, \quad\forall K \in  \mathcal{T}_{h}.
\end{flalign} 
\end{definition}
\begin{remark}
Under the traditional mapping $ \Pi_{h}^{*} $,  parameter $ \lambda $  in the minimum V-angle condition (\ref{restriction_3}) is fixed to be 1. The mapping $ \Pi_{\lambda}^{*} $  gives us more chances to find a better $ \lambda $ in (\ref{p}), such that  $ v^{*}(\alpha,\beta,\gamma,\lambda) $ is smaller, which leads (\ref{restriction_3}) to be a weaker restriction. Acctually, for a given scheme (fixed $ \alpha,\beta,\gamma $), we care about when $ v^{*}(\alpha,\beta,\gamma,\lambda) $ reaches its minimum value for $ \lambda $.
\end{remark}
\begin{remark}
The minimum V-angle condition (\ref{restriction_3}) for tetrahedral meshes is as convenient as  the minimum angle condition \cite{ChenZY2012,XuJC2009,ZhouYH2020,ZhouYH2020_2}  for 2D triangular meshes in application.  Other restrictions on tetrahedral meshes are referred to \cite{Liu1994}. 
\end{remark}

See \textbf{Algorithm}~\ref{searching v}, we show a way to compute $ v^{*}(\alpha,\beta,\gamma,\lambda) $ numerically. The basic idea is to find the minimum $ v \in\left(0^{\circ},60^{\circ} \right] $ by the bisection process, such that $ \det\big( \widetilde{\textbf{N}}_{6\times6}(\Theta_{5},\alpha,\beta,\gamma,\lambda)\big)$ is positive for  $
\Theta_{5}\in \mathbb{Q}_{v} $. In this process, for each $ v $, we compute to check whether the positiveness of $ \det\big( \widetilde{\textbf{N}}_{6\times6}(\Theta_{5},\alpha,\beta,\gamma,\lambda)\big)$ is satisfed for $ \Theta_{5} $ in  $\mathbb{P}_{v}^{(1)}\cap\mathbb{Q}_{v}$, ..., $\mathbb{P}_{v}^{(q)}\cap\mathbb{Q}_{v}$, where  $ \mathbb{P}_{v}^{(n)} $ $(n\in\mathcal{Z}_{q}^{(1)}) $ are discrete point sets. Here points in $ \mathbb{P}_{v}^{(n)} $ are evenly selected in $ [v,180^{\circ}-2v]^{5} $, i.e.,
\begin{equation*}
\mathbb{P}_{v}^{(n)}\!=\!\left\lbrace (v,v,v,v,v)+\frac{180^{\circ}-3v}{N_{n}}(i_{1},i_{2},i_{3},i_{4},i_{5})\,\,\,\,\forall i_{1},...,i_{5}\in\{0\}\cup\mathcal{Z}_{N_n}^{(1)}\right\rbrace,
\end{equation*}
where $ N_n $ is a division number of $ [v,180^{\circ}-2v] $. We take $ N_n $ $(n\in\mathcal{Z}_{q}^{(1)})$ as  a group of increasing prime numbers to avoid repeating calculations of $ \det\big( \widetilde{\textbf{N}}_{6\times6}(\Theta_{5},\alpha,\beta,\gamma,\lambda)\big) $. 
In Section~\ref{Section:5}, we will show the numerical performances of  $ v^{*}(\alpha,\beta,\gamma,\lambda) $ for four given quadratic FVM schemes.
%Lemma~\ref{lemma:theta_K} indicates that  each plane angle of $ \Theta_{5}\in \mathbb{Q}_{v} $ is in the range $ [v,180^{\circ}-2v] $. By dividing  $ [v,180^{\circ}-2v] $ equally into $ N_{n} $ $ (n\in\mathcal{Z}_{q}^{(1)}) $ intervals, we compute $\det\big( \widetilde{\textbf{N}}_{6\times6}(\Theta_{5},\alpha,\beta,\gamma,\lambda)\big)$ with discrete points $\Theta_{5}\in (\cup_{n\in\mathcal{Z}_{q}^{(1)} }\mathbb{P}_{v}^{(n)})\cap\mathbb{Q}_{v} $, in which
%\begin{equation*}
%	\mathbb{P}_{v}^{(n)}\!:=\!\left\lbrace (v,v,v,v,v)+\frac{180^{\circ}-3v}{N_{n}}(i_{1},i_{2},i_{3},i_{4},i_{5}),\,\,\,\forall i_{1},...,i_{5}\in\{0\}\cup\mathcal{Z}_{N_n}^{(1)}\right\rbrace,
%	\end{equation*}
%and  division numbers $ N_{n}$  $(n\in\mathcal{Z}_{q}^{(1)})$ are some  increasing prime numbers. 
%See \textbf{Algorithm}~\ref{searching v},  $ v^{*}(\alpha,\beta,\gamma,\lambda)$ is computed by the bisection process, where the step 5 is achieved by the ergodic process. In Section~\ref{Section:5}, we will show the numerical performances of  $ v^{*}(\alpha,\beta,\gamma,\lambda) $ corresponding to four given quadratic FVM schemes.
\begin{algorithm}[ht!]
\caption{}
\label{searching v}
Given $ \alpha\in(0,1/2)$ and $\beta\in(0,2/3)$ satisfying (\ref{orthogonal_plane_para}), $\lambda $ in (\ref{p}), $\gamma\in (0,3/4)$, a group of increasing prime numbers $ N_{n}$  $(n\in\mathcal{Z}_{q}^{(1)})$, and precision  $\epsilon>0^{\circ}$;\\
Set $ \mathbb{t}_{0}=0^{\circ} $, $ \mathbb{t}_{1}=60^{\circ} $ ;\\
\While{$\mathbb{t}_{1}-\mathbb{t}_{0}>\epsilon$}{
%		Set $ v_{0}(\alpha,\beta,\gamma,\lambda)=\mathbb{t}_{0} $, $ v_{1}(\alpha,\beta,\gamma,\lambda)=\mathbb{t}_{1} $ ;\\
$ v_{0}=\dfrac{\mathbb{t}_{0}+\mathbb{t}_{1}}{2} $; n=1;\\
\While{$ \det\big( \widetilde{\textbf{N}}_{6\times6}(\Theta_{5},\alpha,\beta,\gamma,\lambda)\big)>0$ for $\Theta_{5}\in \mathbb{P}_{v_{0}}^{(n)}\cap\mathbb{Q}_{v_{0}}$, and $ n\leq q $}{
n=n+1;
%					Set $ \mathbb{t}_{1}=v_{0}$;\\
%					\Else{Set $ \mathbb{t}_{0}=v_{0}$}

}
\If{n=q+1}{Set $ \mathbb{t}_{1}=v_{0}$;

\Else{Set $ \mathbb{t}_{0}=v_{0}$;}
}}	
$ v^{*}(\alpha,\beta,\gamma,\lambda)=\mathbb{t}_{1} $.
\end{algorithm}
\begin{lemma}\label{lemma:continuity}
Assume that  $ \textbf{F}_{_{m\!\times\!m}}(X)$  $(X\in S_{_0}) $ is a $ m\times m $ real symmetric matrix and elementwisely continunous in  a connected region $ S_{_0} $ of $ \mathbb{R}^{n} $. If there exists $ X_{_1}\in S_{_0}$ such that $ \textbf{F}_{_{m\!\times\!m}}(X_{_1}) $ is positive definite, and the determinant of $ \textbf{F}_{_{m\!\times\!m}}(X) $ is always positive in $ S_{_0} $, then $ \textbf{F}_{_{m\!\times\!m}}(X) $ is positive definite for every $ X\in S_{_0}$.
\end{lemma}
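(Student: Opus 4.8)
The plan is to reduce this matrix statement to an elementary fact about scalar functions: a continuous real-valued function that never vanishes on a connected set has constant sign. The bridge is the smallest eigenvalue of $\textbf{F}_{_{m\!\times\!m}}(X)$, regarded as a scalar function of $X$.

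First I would set $g(X)=\lambda_{\min}\big(\textbf{F}_{_{m\!\times\!m}}(X)\big)$, the smallest eigenvalue. Since $\textbf{F}_{_{m\!\times\!m}}(X)$ is real symmetric, all its eigenvalues are real, and the Rayleigh-quotient (Courant--Fischer) characterization gives
\begin{equation*}
	g(X)=\min_{\|\mathbf{y}\|=1}\mathbf{y}^{T}\textbf{F}_{_{m\!\times\!m}}(X)\,\mathbf{y}.
\end{equation*}
Because $\textbf{F}_{_{m\!\times\!m}}(X)$ is elementwise continuous in $X$ and the minimum runs over the compact unit sphere, the function $g$ is continuous on $S_{_0}$. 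Next I would observe that $g$ never vanishes on $S_{_0}$: the determinant equals the product of the $m$ eigenvalues, so if $g(X_{_0})=0$ for some $X_{_0}\in S_{_0}$ then $0$ would be an eigenvalue and $\det\textbf{F}_{_{m\!\times\!m}}(X_{_0})=0$, contradicting the hypothesis that the determinant stays positive throughout $S_{_0}$. Hence $g(X)\neq 0$ for every $X\in S_{_0}$.

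Connectedness then closes the argument. The image $g(S_{_0})$ is a connected subset of $\mathbb{R}\setminus\{0\}$, so it lies entirely in $(0,\infty)$ or entirely in $(-\infty,0)$. Since $\textbf{F}_{_{m\!\times\!m}}(X_{_1})$ is positive definite we have $g(X_{_1})>0$, forcing $g(X)>0$ for all $X\in S_{_0}$. As $g$ is the smallest eigenvalue, every eigenvalue of $\textbf{F}_{_{m\!\times\!m}}(X)$ is then positive, i.e. $\textbf{F}_{_{m\!\times\!m}}(X)$ is positive definite for every $X\in S_{_0}$.

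The only point requiring care is the continuity of the smallest eigenvalue, which is exactly what the variational formula above supplies, so no genuine obstacle arises; note in particular that this route uses only topological connectedness and not path-connectedness. An alternative is to partition $S_{_0}$ into the open set where $\textbf{F}_{_{m\!\times\!m}}$ is positive definite and the open set where it has a negative eigenvalue and to invoke connectedness directly, but the constant-sign argument for $g$ is the most economical.
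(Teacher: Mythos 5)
Your proof is correct, and it rests on the same underlying mechanism as the paper's: an eigenvalue that changes sign over $S_{_0}$ would have to vanish somewhere, forcing a zero determinant. The difference is in the execution. The paper argues by contradiction: it supposes $\textbf{F}_{_{m\!\times\!m}}(X_{_2})$ has (an even number of) negative eigenvalues, picks one negative eigenvalue $\mu_{_0}(X_{_2})$, and then invokes ``a simple continuity argument'' to find a point where $\mu_{_0}$ vanishes --- implicitly treating $\mu_{_0}$ as a globally defined continuous eigenvalue function on $S_{_0}$, a step it never justifies (eigenvalue labelling and continuity are exactly the delicate points here). You instead work directly with $g(X)=\lambda_{\min}\big(\textbf{F}_{_{m\!\times\!m}}(X)\big)$, justify its continuity explicitly through the Courant--Fischer characterization as a minimum of continuous functions over the compact unit sphere, rule out zeros of $g$ via the determinant hypothesis, and conclude by connectedness of $g(S_{_0})$. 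Your version is therefore more self-contained and rigorous: it supplies the continuity argument the paper leaves implicit, avoids any eigenvalue-tracking ambiguity, and, as you note, uses only topological connectedness rather than an intermediate-value argument along a path. Both proofs buy the same conclusion; yours closes the one genuine gap in the paper's sketch.
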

\begin{proof}
If not, there exist even number of eigenvalues of $ \textbf{F}_{_{m\!\times\!m}}(X) $ for a certain $ X_{_2}\in S_{_0} $ that are negative. Let one of these negative eigenvalues be $ \mu_{_0}(X_{_2})$  $(<0) $. Since $ \mu_{_0}(X_{_1})>0 $, by a simple continuity argument,  there is a point $ X_{_3}\in S_{0} $ such that  $ \mu_{_0}(X_{_3})=0 $. This contradicts that the determinant of $ \textbf{F}_{_{m\!\times\!m}}(X) $ is positive for every $ X\in S_{_0}$. 
 \end{proof}
%Lemma~\ref{Lemma:regular tetra} implies that $ \widetilde{\textbf{N}}_{6\times6}(\Theta_{5},\alpha,\beta,\gamma,\lambda) $, $\Theta_{5}=(60^{\circ},60^{\circ},60^{\circ},60^{\circ},60^{\circ})  $ is positive definite under the conditions (\ref{orthogonal_plane_para_t}) and (\ref{p}). By Lemma~\ref{lemma:continuity}, under the minimum V-angle condition (\ref{restriction_3}), $  \widetilde{\textbf{N}}_{6\times6}(\Theta_{5},\alpha,\beta,\gamma,\lambda)$  $(\Theta_{5}\in\mathbb{Q}_{v}) $ are positive definite, thus the local stability
%(\ref{eq:constant_element ellipitic}) holds.
%Moreover, the minimum V-angle condition (\ref{restriction_3}) indicates the hidden constants in (\ref{eq:constant_element ellipitic})  having a common lower bound, then we present the stability as follows.

Under the orthogonal condition on the surface (\ref{orthogonal_plane_para}), Lemma~\ref{Lemma:regular tetra} implies that $ \widetilde{\textbf{N}}_{6\times6}(\Theta_{5},\alpha,\beta,\gamma,\lambda) $ with $\Theta_{5}=(60^{\circ},60^{\circ},60^{\circ},60^{\circ},60^{\circ})  $ is positive definite for given $ \lambda $  in (\ref{p}). On the other hand, if $ \mathcal{T}_{h} $ satisfies the minimum V-angle condition (\ref{restriction_3}), then $ \det\left(\widetilde{\textbf{N}}_{6\times6}(\Theta_{5},\alpha,\beta,\gamma,\lambda)\right)>0$ for $\Theta_{5}\in \mathbb{Q}_{v} $ holds for every $ K\in\mathcal{T}_{h} $. Thus, under the two restrictions (\ref{orthogonal_plane_para}) and (\ref{restriction_3}), Lemma~\ref{lemma:continuity} ensures the local stability (\ref{eq:constant_element ellipitic}), in which the hidden constants have a common lower bound. According to above disscussions, we present the stability as follows.

\begin{theorem}[Stability]\label{theorem:stability}
Assume that the diffusion coefficient  $ \kappa $ is piecewise constant over $ \mathcal{T}_{h} $. If a quadratic FVM scheme (\ref{eq:FVM}) satisfies  the orthogonal condition on the surface (\ref{orthogonal_plane_para}) and the minimum V-angle condition (\ref{restriction_3}),
then the local stability
(\ref{eq:constant_element ellipitic}) holds. Furthermore, the bilinear
form $ a_{h}(\cdot,\Pi_{\lambda}^{*}\cdot) $ is uniformly ellipitic, i.e.,
\begin{equation*}
a_{h}(u_{h},\Pi_{\lambda}^{*}u_{h})\gtrsim |u_{h}|_{1}^{2},   \qquad  \forall  u_{h}\in \mathit{U}_{h}.
\end{equation*}
\end{theorem}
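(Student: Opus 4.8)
The plan is to assemble the element-level machinery of Subsections~\ref{form and reduce}--\ref{subsection:restriction3} into one lower bound that is uniform in $K$, and then sum over elements. First I would fix an arbitrary $K\in\mathcal{T}_h$ and pass to the element level: by the equivalent discrete norm (\ref{equi_norm}), the local stability (\ref{eq:constant_element ellipitic}) is equivalent to the positive definiteness of $\frac{1}{h_K}\overline{\mathbb{B}}_{K,\lambda}$. Since the orthogonal condition on the surface (\ref{orthogonal_plane_para}) forces $s^{*}=0$, the congruence $\textbf{C}_{2}$ block-diagonalizes $\frac{1}{h_K}\overline{\mathbb{B}}_{K,\lambda}$ into $s_{1}\frac{1}{h_K}\textbf{M}^{K}_{3\times3}$ and $\frac{1}{h_K}\textbf{N}^{K,\lambda}_{6\times6}=\frac{R_K}{h_K}\widetilde{\textbf{N}}_{6\times6}(\Theta_{5},\alpha,\beta,\gamma,\lambda)$. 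A point I would stress is that the entries $\eta_{1},\eta_{2}$ of $\textbf{C}_{2}$ depend only on the fixed parameters $(\alpha,\beta,\gamma,\lambda)$ and not on the geometry of $K$, so $\textbf{C}_{2}$ is one fixed invertible matrix across the whole mesh; this is what will later let me transfer a spectral bound without losing uniformity. Lemma~\ref{determine_relation} disposes of the $3\times3$ block unconditionally, so the whole question reduces to the $6\times6$ block.

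For the $6\times6$ block I would obtain pointwise positive definiteness through Lemma~\ref{lemma:continuity} on the angle region $\mathbb{Q}_{v}$ with $v=v^{*}(\alpha,\beta,\gamma,\lambda)+\varepsilon^{*}$. Because $\mathbb{Q}_{v}$ shrinks as $v$ grows, $V^{*}$ is upward closed up to $60^\circ$, and since $v>\inf V^{*}=v^{*}$ there is some $v'\in V^{*}$ with $v'<v$, giving $v\in V^{*}$; by definition this means $\det\big(\widetilde{\textbf{N}}_{6\times6}\big)>0$ throughout $\mathbb{Q}_{v}$. The existence of mesh elements forces $v\le 60^\circ$ (each satisfies $\theta_K\le 60^\circ$ by Lemma~\ref{lemma:theta_K}), so the regular-tetrahedron point $(60^\circ,\dots,60^\circ)$ lies in $\mathbb{Q}_{v}$, where $\widetilde{\textbf{N}}_{6\times6}$ is positive definite by Lemma~\ref{Lemma:regular tetra}. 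Assuming $\mathbb{Q}_{v}$ is connected --- which I would check directly from its defining inequalities, noting that the regular point can be joined to any admissible $\Theta_{5}$ without leaving $\mathbb{Q}_{v}$ --- Lemma~\ref{lemma:continuity} yields that $\widetilde{\textbf{N}}_{6\times6}$ is positive definite at every $\Theta_{5}\in\mathbb{Q}_{v}$. The minimum V-angle condition (\ref{restriction_3}) gives $\theta_K\ge v$, hence $\Theta_{5}(K)\in\mathbb{Q}_{v}$, for every $K$, so each $\frac{1}{h_K}\textbf{N}^{K,\lambda}_{6\times6}$ is positive definite.

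The main obstacle is upgrading these pointwise statements into a single constant independent of $K$. Here I would invoke compactness. The margin $\theta_K\ge v>0^\circ$ keeps every admissible $\Theta_{5}$ bounded away from the degenerate configurations $\theta_{1,P_{1}}+\theta_{1,P_{2}}=180^\circ$, etc., because each plane angle is $\ge\theta_K\ge v$ by Lemma~\ref{lemma:theta_K}(i); consequently the region $\mathbb{Q}_{v^{*}+\varepsilon^{*}}$ containing all $\Theta_{5}(K)$ has compact closure contained in the larger region $\mathbb{Q}_{v^{*}+\varepsilon^{*}/2}$, on which $\widetilde{\textbf{N}}_{6\times6}$ is continuous and (by the argument above) positive definite. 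Its smallest eigenvalue is then a positive continuous function on a compact set and attains a positive minimum $\mu_{0}>0$; together with $R_K/h_K\ge 1/2$ (the longest edge is a chord of the circumsphere), this gives $\mu_{\min}\big(\frac{1}{h_K}\textbf{N}^{K,\lambda}_{6\times6}\big)\ge \mu_{0}/2$ uniformly. The same compactness reasoning bounds $\mu_{\min}\big(\frac{1}{h_K}\textbf{M}^{K}_{3\times3}\big)$ from below, and the fixed norms $\|\textbf{C}_{2}\|,\|\textbf{C}_{2}^{-1}\|$ transfer the block-diagonal bound back to $\frac{1}{h_K}\overline{\mathbb{B}}_{K,\lambda}$. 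Hence (\ref{eq:constant_element ellipitic}) holds with a hidden constant common to all $K$.

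Finally I would dispose of the coefficient and globalize. Since $\kappa$ is piecewise constant, on each $K$ one has $a_{h}^{K}(u_{h},\Pi_{\lambda}^{*}u_{h})=\kappa|_{K}\cdot[\text{the }\kappa=1\text{ form}]\ge c_{*}\,c\,|u_{h}|_{1,K}^{2}$, where $c_{*}$ is the lower bound of $\kappa$, so local stability persists with a uniform constant. Summing over $K\in\mathcal{T}_{h}$ and using $\sum_{K}|u_{h}|_{1,K}^{2}=|u_{h}|_{1}^{2}$ then yields the uniform ellipticity $a_{h}(u_{h},\Pi_{\lambda}^{*}u_{h})\gtrsim|u_{h}|_{1}^{2}$, completing the proof.
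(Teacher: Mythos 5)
Your proposal follows the paper's own argument essentially verbatim: the discrete-norm reduction of (\ref{eq:constant_element ellipitic}) to positive definiteness of $\tfrac{1}{h_{K}}\overline{\mathbb{B}}_{K,\lambda}$, the block-diagonalization via $\textbf{C}_{2}$ under $s^{*}=0$, Lemma~\ref{determine_relation} for the $3\times3$ block, Lemma~\ref{Lemma:regular tetra} together with Lemma~\ref{lemma:continuity} on $\mathbb{Q}_{v}$ for the $6\times6$ block, and finally summation over elements with the piecewise-constant $\kappa$ factored out. If anything, you are more complete than the paper's terse justification, since you make explicit several points it leaves implicit --- the upward-closedness of $V^{*}$ (so that $v^{*}+\varepsilon^{*}\in V^{*}$), the connectivity of $\mathbb{Q}_{v}$ required by Lemma~\ref{lemma:continuity}, and the compactness argument (with $R_{K}/h_{K}\geq 1/2$ and the geometry-independence of $\textbf{C}_{2}$) that yields a common lower bound for the hidden constants across all $K$.
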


\section{Error analysis}\label{Section:4}
In this section, we present optimal $ H^{1} $ and $ L^{2} $ error estimates of the quadratic FVM schemes.  The $ H^{1} $ error estimate is based on the continuity and the stability, and the $ L^{2} $ error estimate follows the $ H^{1} $ result and the orthogonal conditions (\ref{orthogonal_plane}) and (\ref{orthogonal_space}).

\vspace{2mm}
Define the piecewise $ H^{2} $ space over $\mathcal{T}_{h}$ as
\begin{equation*}
\mathit{H}_{h}^{2}(\Omega)=\{ u\in \mathit{C}(\Omega): u\vert_{K} \in H^{2}(K)\,\,\,\, \forall K\in \mathcal{T}_{h}\}.
\end{equation*}
Then we have the continuity of the quadratic FVM schemes (\ref{eq:FVM}).
\begin{lemma}
For the bilinear form $ a_{h}(\cdot,\Pi_{\lambda}^{*}\cdot) $ in (\ref{def:bilinear form}), we have
\begin{equation}\label{continue}
|a_{h}(u,\Pi_{\lambda}^{*}u_{h})| \lesssim (|u|_{1}+h|u|_{2,h})|u_{h}|_{1}   \qquad  \forall \, u\in \mathit{H}_{0}^{1}(\Omega)\cap\mathit{H}_{h}^{2}(\Omega) ,\, u_{h}\in \mathit{U}_{h},
\end{equation}
where $|u|_{2,h}= \big(\sum\limits_{K\in \mathcal{T}_{h}}|u|^{2}_{2,K}\big)^{\frac{1}{2}} $.
\end{lemma}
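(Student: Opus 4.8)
The plan is to compare the finite volume bilinear form with the finite element (energy) form and to control the difference by scaling estimates. Writing $a(u,u_{h}):=\iiint_{\Omega}\kappa\nabla u\cdot\nabla u_{h}\,\mathrm{d}x$ for the standard bilinear form, I first split
\[ a_{h}(u,\Pi_{\lambda}^{*}u_{h})=a(u,u_{h})+\sum_{K\in\mathcal{T}_{h}}\mathcal{E}_{K},\qquad \mathcal{E}_{K}:=a_{h}^{K}(u,\Pi_{\lambda}^{*}u_{h})-\iiint_{K}\kappa\nabla u\cdot\nabla u_{h}\,\mathrm{d}x. \]
The energy term is immediately admissible, since $|a(u,u_{h})|\le c^{*}|u|_{1}|u_{h}|_{1}$ by the Cauchy--Schwarz inequality and $\kappa\le c^{*}$; the whole estimate thus reduces to bounding the element consistency errors $\mathcal{E}_{K}$.

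To turn $\mathcal{E}_{K}$ into something estimable, I apply Green's formula on each dual subregion $D_{P}^{K}$ exactly as in (\ref{equation:a_hK}). Because $\Pi_{\lambda}^{*}u_{h}$ is constant on every dual cell, and because $\sum_{P}(\Pi_{\lambda}^{*}u_{h})(P)\mathbf{1}_{(\partial D_{P}^{K})\cap\partial K}$ and $\sum_{P}(\Pi_{\lambda}^{*}u_{h})(P)\mathbf{1}_{D_{P}^{K}}$ reproduce $\Pi_{\lambda}^{*}u_{h}$ on $\partial K$ and on $K$ respectively,
\[ a_{h}^{K}(u,\Pi_{\lambda}^{*}u_{h})=\iint_{\partial K}\Pi_{\lambda}^{*}u_{h}\,(\kappa\nabla u\cdot\mathbf{n})\,\mathrm{d}S-\iiint_{K}\Pi_{\lambda}^{*}u_{h}\,\nabla\!\cdot(\kappa\nabla u)\,\mathrm{d}x. \]
Applying Green's formula to the finite element term and subtracting yields the clean identity
\[ \mathcal{E}_{K}=\iint_{\partial K}(\Pi_{\lambda}^{*}u_{h}-u_{h})\,(\kappa\nabla u\cdot\mathbf{n})\,\mathrm{d}S-\iiint_{K}(\Pi_{\lambda}^{*}u_{h}-u_{h})\,\nabla\!\cdot(\kappa\nabla u)\,\mathrm{d}x, \]
in which the finite volume structure enters only through the single difference $\Pi_{\lambda}^{*}u_{h}-u_{h}$.

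It then remains to bound the two terms by Cauchy--Schwarz and standard scaling. Since $\Pi_{\lambda}^{*}$ preserves constants, a Bramble--Hilbert/scaling argument on the reference element $\hat{K}$ gives $\|\Pi_{\lambda}^{*}u_{h}-u_{h}\|_{0,K}\lesssim h_{K}|u_{h}|_{1,K}$ and $\|\Pi_{\lambda}^{*}u_{h}-u_{h}\|_{0,T_{i}}\lesssim h_{K}^{1/2}|u_{h}|_{1,K}$. For the factors carrying $u$ I use the multiplicative trace inequality $\|\nabla u\|_{0,\partial K}^{2}\lesssim h_{K}^{-1}|u|_{1,K}^{2}+h_{K}|u|_{2,K}^{2}$ together with $\|\nabla\!\cdot(\kappa\nabla u)\|_{0,K}\lesssim|u|_{2,K}+|u|_{1,K}$, the last summand appearing only through $\nabla\kappa$ and hence being of lower order. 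Combining these gives $|\mathcal{E}_{K}|\lesssim|u|_{1,K}|u_{h}|_{1,K}+h_{K}|u|_{2,K}|u_{h}|_{1,K}$, and a discrete Cauchy--Schwarz sum over $K$ produces $\sum_{K}|\mathcal{E}_{K}|\lesssim(|u|_{1}+h|u|_{2,h})|u_{h}|_{1}$, which with the energy bound closes the estimate.

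The step I expect to be the main obstacle is the bookkeeping behind the second paragraph: rewriting the dual-face flux sum $-\sum_{K^{*}}\iint_{\partial K^{*}\cap K}\kappa\nabla u\cdot\mathbf{n}\,\Pi_{\lambda}^{*}u_{h}$ as boundary and volume integrals of the single piecewise-constant function $\Pi_{\lambda}^{*}u_{h}$, and verifying that the per-cell Green's formula assembles consistently across the internal dual faces of $K$ (whose contributions are the jumps carried by $\Pi_{\lambda}^{*}u_{h}$). Once that identity is secured, the remaining estimates are routine. I note finally that, in contrast with the forthcoming $L^{2}$ estimate, this continuity bound does not require the orthogonal conditions, since the $|u|_{1,K}|u_{h}|_{1,K}$ contribution of the surface term is already permitted by the target inequality.
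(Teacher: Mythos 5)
Your proof is correct, but it proceeds along a genuinely different route from the paper's. You split $a_h(u,\Pi_{\lambda}^{*}u_h)$ into the finite element form $a(u,u_h)$ plus element consistency errors $\mathcal{E}_K$, and after two applications of Green's formula reduce $\mathcal{E}_K$ to boundary and volume integrals weighted by the single difference $\Pi_{\lambda}^{*}u_h-u_h$, which you control by constant-preservation/scaling bounds and the multiplicative trace inequality; the assembly step you flagged as the main obstacle is indeed sound, since it is exactly the paper's identity (\ref{equation:a_hK}) extended from the basis functions $\phi_n$ to $u\in H^{2}(K)$, and it works because the regions $D_P^{K}$ partition $K$ while the pieces $\partial K\cap K_P^{*}$ partition $\partial K$. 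The paper never introduces the FEM form: it rewrites $a_h(u,\Pi_{\lambda}^{*}u_h)$ directly as flux integrals against the jumps $[\Pi_{\lambda}^{*}u_h]_{s^{*}}$ over the interior dual faces, applies a weighted Cauchy--Schwarz inequality, bounds the jump factor through the matrix machinery already built for the stability analysis (a row-sum-zero matrix $\textbf{K}$, relation (\ref{TG}), and the discrete norm equivalence (\ref{equi_norm})), and bounds the flux factor by a trace theorem plus scaling on the reference element. The trade-offs are as follows. Your argument uses only standard finite element tools and exhibits the FVM form as a perturbation of the energy form, but it requires $\nabla\cdot(\kappa\nabla u)\in L^{2}(K)$ on each element, i.e., essentially $\kappa\in W^{1,\infty}$ elementwise, whereas the paper's proof never differentiates $\kappa$ and needs only boundedness of $\kappa$; this is immaterial under the standing assumption of piecewise smooth $\kappa$ (and Lemma~\ref{lemma:uniformly ellipitic} later assumes piecewise $W^{1,\infty}$ anyway), but it is a genuinely stronger hypothesis for this particular lemma. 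Conversely, the paper's jump-based structure is reused verbatim in the proof of Lemma~\ref{lemma:uniformly ellipitic}, where the same estimate is applied with coefficient $\kappa-\overline{\kappa}$, a step at which your decomposition would have to be reworked.
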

\begin{proof}
For  $ u\in \mathit{H}_{0}^{1}(\Omega)\cap\mathit{H}_{h}^{2}(\Omega)$ and $ u_{h}\in \mathit{U}_{h} $, we rewrite
\begin{equation*}
a_{h}(u,\Pi_{\lambda}^{*}u_{h})=-\sum\limits_{K\in \mathcal{T}_{h}}\sum\limits_{s^{*}\in \mathcal{S}_{K}^{*}}\int\!\!\!\int_{s^{*}}(\kappa\nabla u)\cdot \textbf{n}\, [\Pi_{\lambda}^{*}u_{h}]_{s^{*}}\mathrm{d} S, 
\end{equation*}
where $ {S}_{K}^{*}$ is the set of all common faces of the dual elements contained in the interior of $ K $. For all polygonal faces $ s^{*} $ in $ {S}_{K}^{*}$, this means  
$ \cup_{s^{*}\in{S}_{K}^{*}}s^{*}=\cup_{K^{*}\in\mathcal{T}_{h}^{*}}   \big(\partial K^{*}\cap K\big)$.
And $ \textbf{n} $ is the unit normal vector on $ s^{*} $ from a dual element $ K^{*}_{1} $ to its neighboring dual element $ K^{*}_{2} $, and $ [\Pi_{\lambda}^{*}u_{h}]_{s^{*}}:=\Pi_{\lambda}^{*}u_{h}\vert_{K^{*}_{1}}-\Pi_{\lambda}^{*}u_{h}\vert_{K^{*}_{2}} $ is the jump of $ \Pi_{\lambda}^{*}u_{h} $ on $ s^{*} $.

Then, by the Cauchy-Schwartz inequality
\begin{align}
|a_{h}(u,\Pi_{\lambda}^{*}u_{h})| &\lesssim \left( \sum\limits_{K\in \mathcal{T}_{h}}\sum\limits_{s^{*}\in \mathcal{S}_{K}^{*}}\!\!|s^{*}|^{-1}\!\!\int\!\!\!\int_{s^{*}}[\Pi_{\lambda}^{*}u_{h}]_{s^{*}}^2\mathrm{d} S \right) ^{\frac{1}{2}}\!\! \left( \sum\limits_{K\in \mathcal{T}_{h}}\sum\limits_{s^{*}\in \mathcal{S}_{K}^{*}}\!\!|s^{*}|\int\!\!\!\int_{s^{*}}((\kappa\nabla u)\cdot \textbf{n})^{2}\, \mathrm\mathrm{d} S \right) ^{\frac{1}{2}}\nonumber\\
&\lesssim\left( \sum\limits_{K\in \mathcal{T}_{h}}\sum\limits_{s^{*}\in \mathcal{S}_{K}^{*}}\!\!h_{K}\,[\Pi_{\lambda}^{*}u_{h}]_{s^{*}}^2 \right) ^{\frac{1}{2}}\!\!\left( \sum\limits_{K\in \mathcal{T}_{h}}\sum\limits_{s^{*}\in \mathcal{S}_{K}^{*}}\!\frac{|s^{*}|}{h_{K}}\int\!\!\!\int_{s^{*}}|\nabla u|^{2}\, \mathrm\mathrm{d} S \right) ^{\frac{1}{2}}. \label{lemma_eq:continuity_1}
\end{align}

We start with the first term on the right hand of (\ref{lemma_eq:continuity_1}). Definition~\ref{definition:mapping} indicates that $ [\Pi_{\lambda}^{*}u_{h}]_{s^{*}}$   $(s^{*}\in \mathcal{S}_{K}^{*})$ are linear combinations of $ u_{h}(P_{i}) $ $ (i\in \mathcal{Z}_{10}^{(1)} ) $. Thus, there exists a matrix $ \textbf{K}$ with 10 columns such that
\begin{equation*}
\sum\limits_{s^{*}\in \mathcal{S}_{K}^{*}}[\Pi_{\lambda}^{*}u_{h}]_{s^{*}}^2=(\textbf{K}\textbf{u}_K)^{T}(\textbf{K}\textbf{u}_K),
\end{equation*}
where $ \textbf{u}_{K}=(u_{1},u_{2},...,u_{10})^T $ and $ u_{i}=u_{h}(P_{i}) $ $(i\in\mathcal{Z}_{10}) $. Obviously, $u_{i_{1}}=u_{i_{2}}$ for $ i_{1},i_{2}\in \mathcal{Z}_{10}^{(1)} $ yields $ [\Pi_{\lambda}^{*}u_{h}]_{s^{*}}=\Pi_{\lambda}^{*}u_{h}\vert_{K^{*}_{1}}-\Pi_{\lambda}^{*}u_{h}\vert_{K^{*}_{2}}=0$ for $ s^{*}\in \mathcal{S}_{K}^{*}$. Thus, taking $\textbf{u}_{K}=u_{1}(1,1,...,1)^T$,  one obtains $ \sum_{s^{*}\in \mathcal{S}_{K}^{*}}[\Pi_{\lambda}^{*}u_{h}]_{s^{*}}^2=0 $ which indicates $ \textbf{K}\textbf{u}_{K}$ being a zero vector. That means the row sum of $ \textbf{K}$  is zero. By relation (\ref{TG}), one arrives at
\begin{equation*}
(\textbf{K}\textbf{u}_K)^{T}(\textbf{K}\textbf{u}_K)=(\textbf{K}\textbf{T}\textbf{G}\textbf{u}_K)^{T}(\textbf{K}\textbf{T}\textbf{G}\textbf{u}_K)=(\textbf{G}\textbf{u}_K)^{T}(\textbf{K}\textbf{T})^{T}(\textbf{K}\textbf{T})(\textbf{G}\textbf{u}_K)\lesssim(\textbf{G}\textbf{u}_K)^{T}(\textbf{G}\textbf{u}_K).
\end{equation*}
Recalling the equivalent discrete  norm (\ref{equi_norm}), we have
\begin{equation}\label{Pip_uh}
\left( \sum\limits_{K\in \mathcal{T}_{h}}\sum\limits_{s^{*}\in \mathcal{S}_{K}^{*}}h_{K}\,[\Pi_{\lambda}^{*}u_{h}]_{s^{*}}^2 \right) ^{\frac{1}{2}} \lesssim \left( \sum\limits_{K\in \mathcal{T}_{h}}h_{K}\,\|\textbf{G}\textbf{u}_K\|^2 \right) ^{\frac{1}{2}}\lesssim \left( \sum\limits_{K\in \mathcal{T}_{h}} |u_{h}|^{2}_{1,K}\right) ^{\frac{1}{2}}= |u_{h}|_{1}.
\end{equation}

For  the second term on the right hand of (\ref{lemma_eq:continuity_1}), let $ \varphi=\nabla u_{h} $. Since $\mathcal{T}_{h}$ is a regular partition (\ref{shape-regular}), it is obvious that
\begin{equation*}
\int\!\!\!\int_{s^{*}}|\varphi|^{2}\, \mathrm{d} S \lesssim h_{K}^{2}\int\!\!\!\int_{\hat{s}^{*}}|\hat{\varphi}|^{2}\, \mathrm{d} \hat{S}.
\end{equation*}
According to the trace theorem, we have
\begin{equation*}
\sum\limits_{\hat{s}^{*}\in \mathcal{S}_{\hat{K}}^{*}}\int\!\!\!\int_{\hat{s}^{*}}|\hat{\varphi}|^{2}\, \mathrm\mathrm{d} \hat{S}  \lesssim  \|\hat{\varphi}\|^{2}_{1,\hat{K}}.
\end{equation*}
The Sobolev norms and semi-norms of $ \varphi $ satisfy \cite{LiRH2000}:
\begin{equation*}
\|\hat{\varphi}\|^{2}_{0,\hat{K}} \lesssim h_{K}^{-3}\|\varphi\|^{2}_{0,K},\quad |\hat{\varphi}|^{2}_{1,\hat{K}} \lesssim h_{K}^{-1}|\varphi|^{2}_{1,K},
\end{equation*}
which leads to
\begin{equation*}
\sum\limits_{s^{*}\in \mathcal{S}_{K}^{*}}\int\!\!\!\int_{s^{*}}\varphi^{2}\, \mathrm\mathrm{d} S \lesssim h_{K}^{-1}\|\varphi\|^{2}_{0,K}+h_{K}|\varphi|^{2}_{1,K}.
\end{equation*}
Then 
\begin{equation*}
\begin{split}
\left( \sum\limits_{K\in \mathcal{T}_{h}}\sum\limits_{s^{*}\in \mathcal{S}_{K}^{*}}\frac{|s^{*}|}{h_{K}}\int\!\!\!\int_{s^{*}}(\nabla u)^{2}\, \mathrm\mathrm{d} S \right) ^{\frac{1}{2}} &\lesssim \left( \sum\limits_{K\in\mathcal{T}_{h}}h_{K}(h_{K}^{-1}\|\nabla u\|^{2}_{0,K}+h_{K}|\nabla u|^{2}_{1,K}) \right) ^{\frac{1}{2}}\\
&\lesssim \left( \sum\limits_{K\in\mathcal{T}_{h}}(|u|^{2}_{1,K}+h_{K}^{2}|u
|^{2}_{2,K}) \right) ^{\frac{1}{2}},
\end{split}
\end{equation*}
which together with (\ref{lemma_eq:continuity_1}) and (\ref{Pip_uh}) completes the proof. 
 \end{proof}

Based on Theorem~\ref{theorem:stability}, we give the stability for variable $ \kappa(x_1,x_2,x_3) $.
\begin{lemma}\label{lemma:uniformly ellipitic}
Under the same conditions of Theorem~\ref{theorem:stability} and more generally, assuming that $ \kappa $ is piecewise $ W^{1,\infty} $ over $ \mathcal{T}_{h} $, 
then the bilinear form $ a_{h}(\cdot,\Pi_{\lambda}^{*}\cdot) $  is uniformly ellipitic for sufficiently small $  h>0 $, i.e.,
\begin{equation}\label{stability}
a_{h}(u_{h},\Pi_{\lambda}^{*}u_{h})\gtrsim |u_{h}|_{1}^{2},   \qquad  \forall  u_{h}\in \mathit{U}_{h}.
\end{equation}
\end{lemma}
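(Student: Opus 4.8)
The plan is to reduce the variable-coefficient case to the piecewise-constant case already settled in Theorem~\ref{theorem:stability} by freezing $\kappa$ on each element and treating the remainder as an $O(h)$ perturbation. Fix $K\in\mathcal{T}_h$ and let $\bar{\kappa}_K:=\kappa(Q_K)$ be the value of $\kappa$ at the barycenter of $K$, a constant on $K$. Writing $\kappa=\bar{\kappa}_K+(\kappa-\bar{\kappa}_K)$ and using the linearity of the form in $\kappa$ splits the element bilinear form as
\begin{equation*}
a_h^K(u_h,\Pi_{\lambda}^{*}u_h)=\bar{\kappa}_K\,a_{h,1}^K(u_h,\Pi_{\lambda}^{*}u_h)+\big(a_h^K-\bar{\kappa}_K\,a_{h,1}^K\big)(u_h,\Pi_{\lambda}^{*}u_h),
\end{equation*}
where $a_{h,1}^K$ denotes the form with $\kappa\equiv1$. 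Since $\bar{\kappa}_K\ge c_*>0$, Theorem~\ref{theorem:stability} (whose hidden constant is uniform over $\mathcal{T}_h$ under (\ref{orthogonal_plane_para}) and (\ref{restriction_3})) gives $\bar{\kappa}_K\,a_{h,1}^K(u_h,\Pi_{\lambda}^{*}u_h)\ge c_*C_0\,|u_h|_{1,K}^2$ for a fixed $C_0>0$. It remains to show that the perturbation term is $O(h)$ relative to $|u_h|_{1,K}^2$.

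Next I would write the perturbation as a quadratic form $\textbf{u}_K^T\mathbb{D}_{K,\lambda}\textbf{u}_K$, with $\mathbb{D}_{K,\lambda}=\textbf{S}\mathbb{D}_{K,1}$ in analogy with (\ref{A_Kp}), whose entries are $d_{mn}^1=-\iint_{\partial K_{P_m}^{*}\cap K}(\kappa-\bar{\kappa}_K)\nabla\phi_n\cdot\textbf{n}\,\mathrm{d}S$. Because $\kappa\in W^{1,\infty}$ we have $|\kappa-\bar{\kappa}_K|\lesssim h_K$ on $K$, while shape regularity gives $|\nabla\phi_n|\lesssim h_K^{-1}$ and the dual face area is $\lesssim h_K^2$; a direct count then yields $|d_{mn}^1|\lesssim h_K^2\cdot h_K^{-1}\cdot h_K\cdot\ldots$, i.e. every entry of $\mathbb{D}_{K,\lambda}$ is $O(h_K^2)$. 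A naive estimate $|\textbf{u}_K^T\mathbb{D}_{K,\lambda}\textbf{u}_K|\lesssim h_K^2\|\textbf{u}_K\|^2$ is useless, however, because $\|\textbf{u}_K\|$ retains the uncontrolled constant mode; the crucial point is to eliminate it.

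I would then show that $\mathbb{D}_{K,\lambda}$ annihilates constants from both sides, $\mathbb{D}_{K,\lambda}\mathbb{1}=\mathbb{1}\mathbb{D}_{K,\lambda}=\mathbb{0}_{_{10}}$, exactly as in the proof of Lemma~\ref{Lemma:99matix}. The trial-side identity is immediate since a constant trial function has vanishing gradient, while the test-side identity follows by summing $d_{mn}^1$ over $m$ and applying the divergence theorem as in (\ref{equation:a_hK}): the boundary and volume contributions cancel just as in the proof of $\mathbb{1}\mathbb{A}_{K,1}=\mathbb{0}_{_{10}}$, and $\mathbb{1}\textbf{S}=\mathbb{1}$ transfers both properties to $\mathbb{D}_{K,\lambda}$. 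The argument of Lemma~\ref{Lemma:99matix} then gives $\textbf{u}_K^T\mathbb{D}_{K,\lambda}\textbf{u}_K=(\textbf{G}\textbf{u}_K)^T\overline{\mathbb{D}}_{K,\lambda}(\textbf{G}\textbf{u}_K)$ with $\overline{\mathbb{D}}_{K,\lambda}$ the symmetrization of $\textbf{T}^T\mathbb{D}_{K,\lambda}\textbf{T}$; since $\textbf{T}$ is fixed and $\mathbb{D}_{K,\lambda}=O(h_K^2)$, the discrete norm equivalence (\ref{equi_norm}) yields
\begin{equation*}
\big|\big(a_h^K-\bar{\kappa}_K\,a_{h,1}^K\big)(u_h,\Pi_{\lambda}^{*}u_h)\big|\lesssim h_K^2\|\textbf{G}\textbf{u}_K\|^2\lesssim h_K\,|u_h|_{1,K}^2.
\end{equation*}

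Finally, combining the two estimates gives $a_h^K(u_h,\Pi_{\lambda}^{*}u_h)\ge(c_*C_0-C_1h)\,|u_h|_{1,K}^2$, so for $h$ sufficiently small the perturbation is absorbed and local stability holds with a uniform constant; summing over $K\in\mathcal{T}_h$ produces (\ref{stability}). I expect the main obstacle to be the third step: the entrywise $O(h_K^2)$ bound by itself is insufficient, and one must exploit the two-sided annihilation of constants to replace $\textbf{u}_K$ by $\textbf{G}\textbf{u}_K$ — only then does the gain of one factor $h_K$ appear, making the perturbation negligible against the $O(1)$ coercivity of the frozen-coefficient form.
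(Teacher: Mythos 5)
Your proof is correct, and at the top level it follows the same strategy as the paper --- freeze $\kappa$ to a constant on each element, invoke the piecewise-constant stability of Theorem~\ref{theorem:stability}, and absorb the remainder as an $O(h)$ perturbation for small $h$ --- but your treatment of the perturbation is genuinely different. The paper freezes to the cell average $\overline{\kappa}\vert_{K}=|K|^{-1}\iiint_{K}\kappa\,\mathrm{d}x_1\mathrm{d}x_2\mathrm{d}x_3$ (you use the barycenter value; both give $\|\kappa-\overline{\kappa}\|_{0,\infty}\lesssim h$ for piecewise $W^{1,\infty}$ coefficients), writes the difference $a_h-\overline{a}_h$ as a sum over interior dual faces of $\iint_{s^*}(\kappa-\overline{\kappa})\nabla u_h\cdot\textbf{n}\,[\Pi_{\lambda}^{*}u_h]_{s^*}\,\mathrm{d}S$, and then reuses the continuity estimate (\ref{continue}) together with the inverse inequality $h|u_h|_{2,h}\lesssim |u_h|_1$ to conclude $|a_h-\overline{a}_h|\lesssim h|u_h|_1^2$; there the constant mode is killed because the jump $[\Pi_{\lambda}^{*}u_h]_{s^*}$ vanishes on constants. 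You stay instead at the element-matrix level: the perturbation matrix is entrywise $O(h_K^2)$ (coefficient oscillation $h_K$, times gradient bound $h_K^{-1}$, times dual-face area $h_K^2$), it annihilates constants on both sides (pairwise cancellation of interior dual-face contributions for the column sums, $\sum_n\nabla\phi_n=0$ for the row sums, inherited through $\textbf{S}$ since $\mathbb{1}\textbf{S}=\mathbb{1}$), so the argument of Lemma~\ref{Lemma:99matix} with (\ref{TG}) lets you replace $\textbf{u}_K$ by $\textbf{G}\textbf{u}_K$, and the discrete norm equivalence (\ref{equi_norm}) converts $h_K^2\|\textbf{G}\textbf{u}_K\|^2$ into $h_K|u_h|^2_{1,K}$. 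Both arguments pivot on exactly the point you flag at the end --- eliminating the constant mode before estimating --- but yours is local and purely algebraic, recycling Lemmas~\ref{Lemma:Descrete equivalent norm} and~\ref{Lemma:99matix} while avoiding the trace-theorem and inverse estimates, whereas the paper's is shorter because it recycles wholesale the machinery already built for the continuity lemma. Your appeal to the local stability (\ref{eq:constant_element ellipitic}) with a mesh-uniform hidden constant is legitimate: that uniformity is precisely what the paper establishes before stating Theorem~\ref{theorem:stability}, so the absorption argument closes as you claim.
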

\begin{proof}
Let 
\begin{equation*}
\overline{a}_{h}(u_{h},\Pi_{\lambda}^{*}u_{h})=-\sum\limits_{K\in\mathcal{T}_{h}}\sum\limits_{K^{*}\in\mathcal{T}_{h}^{*}}  {\int\!\!\!\int_{\partial K^{*}\cap K}\overline{\kappa}\, \nabla u_{h}\cdot \textbf{n}\, \Pi_{\lambda}^{*}u_{h}\mathrm{d} S},
\end{equation*}
where $ \overline{\kappa} $ is a piecewise constant function that $\overline{\kappa}\vert_{K}=|K|^{-1}\int\!\!\!\int\!\!\!\int_{K}\kappa\,\mathrm{d} x_1  \mathrm{d} x_2 \mathrm{d} x_3 $  $\forall K\in \mathcal{T}_{h} $. 
Theorem~\ref{theorem:stability} indicates that
\begin{equation*}
\overline{a}_{h}(u_{h},\Pi_{\lambda}^{*}u_{h})\gtrsim |u_{h}|_{1}^{2}   \qquad \forall u_{h}\in \mathit{U}_{h}.
\end{equation*}
Similar to the proof of the continuity (\ref{continue}) and by the inverse estimate, we obtain
\begin{equation*}
\begin{split}
|a_{h}(u_{h},\Pi_{\lambda}^{*}u_{h})-\overline{a}_{h}(u_{h},\Pi_{\lambda}^{*}u_{h})|&= \left|\sum\limits_{K\in \mathcal{T}_{h}}\sum\limits_{s^{*}\in \mathcal{S}_{K}^{*}}\int\!\!\!\int_{s^{*}}(\kappa-\overline{\kappa})\nabla u_{h}\cdot \textbf{n}\, [\Pi_{\lambda}^{*}u_{h}]_{s^{*}}\mathrm{d} S\right|\\
&\lesssim \|\kappa-\overline{\kappa}\|_{0,\infty}(|u_{h}|_{1}+h|u_{h}|_{2,h})|u_{h}|_{1}\lesssim h|u_{h}|^2_{1},  \quad \forall u_{h}\in \mathit{U}_{h}.
\end{split}
\end{equation*}
Therefore, 
\begin{equation*}
a_{h}(u_{h},\Pi_{\lambda}^{*}u_{h}) \geq\overline{a}_{h}(u_{h},\Pi_{\lambda}^{*}u_{h})-|a_{h}(u_{h},\Pi_{\lambda}^{*}u_{h})-\overline{a}_{h}(u_{h},\Pi_{\lambda}^{*}u_{h})|\gtrsim|u_{h}|^2_{1},  \quad \forall u_{h}\in \mathit{U}_{h},
\end{equation*}
when $ h $ is small enough. 
 \end{proof}

Then, the $ H^{1} $ error estimate follows the stability.
\begin{theorem}[$ H^{1} $ error estimate]
Suppose that $u\in \mathit{H}_{0}^{1}(\Omega)\cap\mathit{H}^{3}(\Omega) $ is the solution of (\ref{eq:epselliptic}). If the conditions of Lemma~\ref{lemma:uniformly ellipitic} are satisfied, then (\ref{eq:FVM}) has a unique solution $ u_{h}\in \mathit{U}_{h} $, and 
\begin{equation}\label{H1}
|u-u_{h}|_{1} \lesssim h^{2}|u|_{3}.
\end{equation}
\end{theorem}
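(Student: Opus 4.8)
The plan is to combine the stability from Lemma~\ref{lemma:uniformly ellipitic} with the continuity (\ref{continue}) in a standard energy argument, after first establishing the consistency of the scheme with the exact solution. To begin, I would settle existence and uniqueness. Since $\lambda\neq0$, the matrix $\textbf{S}$ in (\ref{S}) is invertible, so $\Pi_{\lambda}^{*}:\mathit{U}_{h}\to\mathit{V}_{h}$ is a bijection and $\dim\mathit{V}_{h}=\dim\mathit{U}_{h}$; hence (\ref{eq:FVM}) is a square linear system. By the coercivity (\ref{stability}), the homogeneous problem forces $u_{h}=0$, so (\ref{eq:FVM}) admits a unique solution $u_{h}\in\mathit{U}_{h}$.

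Next I would establish consistency. Integrating $-\nabla\cdot(\kappa\nabla u)=f$ over each dual element $K^{*}$ and applying the divergence theorem gives $-\iint_{\partial K^{*}}(\kappa\nabla u)\cdot\textbf{n}\,\mathrm{d}S=\iiint_{K^{*}}f\,\mathrm{d}x$. Multiplying by the constant value of $\Pi_{\lambda}^{*}v_{h}$ on $K^{*}$ and summing over the dual mesh yields, directly from (\ref{def:bilinear form}),
\[
a_{h}(u,\Pi_{\lambda}^{*}v_{h})=(f,\Pi_{\lambda}^{*}v_{h})\qquad\forall v_{h}\in\mathit{U}_{h}.
\]
Subtracting (\ref{eq:FVM}) then produces the Galerkin-type orthogonality $a_{h}(u-u_{h},\Pi_{\lambda}^{*}v_{h})=0$ for all $v_{h}\in\mathit{U}_{h}$.

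For the estimate itself, let $u_{I}\in\mathit{U}_{h}$ be the quadratic Lagrange interpolant of $u$; since $u\in\mathit{H}^{3}(\Omega)$, classical interpolation theory gives $|u-u_{I}|_{1}\lesssim h^{2}|u|_{3}$ and $|u-u_{I}|_{2,h}\lesssim h|u|_{3}$, and $u_{I}$ inherits the homogeneous boundary condition so that $u-u_{I}\in\mathit{H}_{0}^{1}(\Omega)\cap\mathit{H}_{h}^{2}(\Omega)$. Writing $w_{h}:=u_{I}-u_{h}\in\mathit{U}_{h}$, the stability (\ref{stability}) together with the orthogonality gives
\[
|w_{h}|_{1}^{2}\lesssim a_{h}(w_{h},\Pi_{\lambda}^{*}w_{h})=a_{h}(u_{I}-u,\Pi_{\lambda}^{*}w_{h}).
\]
Applying the continuity (\ref{continue}) to the right-hand side and then the interpolation bounds yields
\[
|w_{h}|_{1}^{2}\lesssim\bigl(|u-u_{I}|_{1}+h|u-u_{I}|_{2,h}\bigr)|w_{h}|_{1}\lesssim h^{2}|u|_{3}\,|w_{h}|_{1},
\]
so that $|u_{I}-u_{h}|_{1}\lesssim h^{2}|u|_{3}$, and the triangle inequality $|u-u_{h}|_{1}\le|u-u_{I}|_{1}+|u_{I}-u_{h}|_{1}$ delivers (\ref{H1}).

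The main obstacle is the consistency identity $a_{h}(u,\Pi_{\lambda}^{*}v_{h})=(f,\Pi_{\lambda}^{*}v_{h})$: one must verify that $\Pi_{\lambda}^{*}v_{h}$ is genuinely piecewise constant on $\mathcal{T}_{h}^{*}$ so that the per-dual-element divergence theorem reassembles exactly into (\ref{def:bilinear form}), with the correct handling of the boundary dual elements on which $v_{h}$ vanishes. Once this is in place, the remaining work is routine bookkeeping using the already-proven stability, continuity, and the standard quadratic interpolation estimates.
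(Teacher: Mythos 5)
Your proposal is correct and follows essentially the same route as the paper: existence and uniqueness from the coercivity of $a_{h}(\cdot,\Pi_{\lambda}^{*}\cdot)$, Galerkin-type orthogonality from consistency of the exact solution with the dual-element balance equations, and then the standard chain of stability, continuity (\ref{continue}), quadratic interpolation estimates, and the triangle inequality. The only difference is that you spell out the consistency identity and the dimension count $\dim\mathit{U}_{h}=\dim\mathit{V}_{h}$, which the paper treats as immediate ("Apparently, $a_{h}(u,v_{h})=(f,v_{h})$").
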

\begin{proof}
Lemma~\ref{lemma:uniformly ellipitic} indicates
\begin{equation*}
a_h(u_{h},\Pi_{\lambda}^{*}u_{h})\geq0,\quad \forall  u_{h}\in \mathit{U}_{h},
\end{equation*}
and the equality holds if and only if  $ u_{h}=0 $ which verifies the existence and uniqueness of $ u_{h}$.

Apparently, $ a_{h}(u,v_{h})=(f,v_{h})\,\,\,\forall v_{h}\in \mathit{V}_{h}$, which together with (\ref{eq:FVM}) leads to the orthogonality
\begin{equation} \label{orthogonality:a_h}
a_{h}(u_{h}-u,v_{h})=0,\quad \forall v_{h}\in \mathit{V}_{h}.
\end{equation}
Let $ u_{I}\in \mathit{U}_{h} $ be the standard  Lagrange quadratic interpolation of u over $ \mathcal{T}_{h} $. Then from (\ref{continue}), (\ref{stability}) and (\ref{orthogonality:a_h}), we have
\begin{equation*} 
|u_{h}-u_{I}|^{2}_{1} \lesssim a_{h}(u_{h}-u_{I},\Pi_{\lambda}^{*}(u_{h}-u_{I}))=a_{h}(u-u_{I},\Pi_{\lambda}^{*}(u_{h}-u_{I}))\lesssim (|u-u_{I}|_{1}+h|u-u_{I}|_{2,h})|u_{h}-u_{I}|_{1}.
\end{equation*}
Eliminating $ |u_{h}-u_{I}|_{1} $ and by the standard interpolation error estimate, then
\begin{equation*}
|u_{h}-u_{I}|_{1} \lesssim |u-u_{I}|_{1}+h|u-u_{I}|_{2,h} \lesssim h^{2}|u|_{3}.
\end{equation*}
Together with $|u-u_{h}|_{1}\leq |u-u_{I}|_{1}+|u_{h}-u_{I}|_{1}  $, we obtain (\ref{H1}). 
 \end{proof}
%By a similar procedure of duality argument to \cite{Wang20162729}, we present the  optimal $ L^{2} $ error estimate under the orthogonal condition (\ref{orthogonal_plane}) and (\ref{orthogonal_space}) without the details of proof.

We present optimal $ L^{2} $ error estimate in the following Theorem~\ref{L2}, which benefits from \cite{WangX2016}. 
\begin{theorem}[$ L^{2} $ error estimate]\label{L2}
Suppose that $u\in H_{0}^{1}(\Omega)\cap H^{4}(\Omega)$ is the solution of $(\ref{eq:epselliptic})$. If the conditions of Lemma~\ref{lemma:uniformly ellipitic} and the orthogonal condition on the volume (\ref{orthogonal_space}) are satisfied, then 
\begin{equation} \label{eq:thm:L2}
%\| u-u_{h}\|_{0}\leq \mathit{C}h^{k+1} (\|A\|_{0,\infty}+|A|_{1,\infty})\|u\|_{k+2}.
\| u-u_{h}\|_{0}\lesssim h^{3}\|u\|_{4}.
\end{equation}
\end{theorem}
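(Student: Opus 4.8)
The plan is to run the Aubin--Nitsche duality argument, with the two orthogonal conditions (\ref{orthogonal_plane}) and (\ref{orthogonal_space}) supplying the extra power of $h$ beyond the $H^1$ rate (\ref{H1}). Write $e=u-u_h$ and let $a(v,w):=\iiint_{\Omega}\kappa\nabla v\cdot\nabla w\,\mathrm{d}x$ be the symmetric bilinear form of the associated FEM. First I would introduce the auxiliary dual problem $-\nabla\cdot(\kappa\nabla w)=e$ in $\Omega$ with $w=0$ on $\Gamma$. Since $\Omega$ is a bounded convex polyhedron, elliptic regularity yields $w\in H_0^1(\Omega)\cap H^2(\Omega)$ with $\|w\|_2\lesssim\|e\|_0$. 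Testing the dual problem against $e\in H_0^1(\Omega)$ gives the starting identity $\|e\|_0^2=(e,e)=a(e,w)$.

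Next I would bring in the quadratic Lagrange interpolant $w_I\in \mathit{U}_h$ of $w$ and split $a(e,w)=a(e,w-w_I)+a(e,w_I)$. The first term is controlled at once by the boundedness of $a$, the $H^1$ estimate (\ref{H1}) and the interpolation bound $|w-w_I|_1\lesssim h\|w\|_2$, giving $|a(e,w-w_I)|\lesssim|e|_1\,|w-w_I|_1\lesssim h^2|u|_3\cdot h\|w\|_2\lesssim h^3\|u\|_4\|e\|_0$. For the second term I would exploit that $w_I\in\mathit{U}_h\subset H_0^1(\Omega)$ while $\Pi_\lambda^{*}w_I\in\mathit{V}_h$. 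Combining the weak form $a(u,w_I)=(f,w_I)$ with the scheme (\ref{eq:FVM}) tested against $\Pi_\lambda^{*}w_I$, namely $a_h(u_h,\Pi_\lambda^{*}w_I)=(f,\Pi_\lambda^{*}w_I)$, produces the exact identity
\[
a(e,\,w_I)=\big(f,\,w_I-\Pi_\lambda^{*}w_I\big)+\big(a_h(u_h,\Pi_\lambda^{*}w_I)-a(u_h,w_I)\big),
\]
whose two summands I denote by $\mathrm{(I)}$ and $\mathrm{(II)}$. Term $\mathrm{(I)}$ is a volume integral and is exactly where the volume orthogonal condition (\ref{orthogonal_space}) acts: approximating $f$ elementwise by $f_1\in P^1(K)$ and decomposing $w_I$ into its vertex-linear part and its edge-quadratic correction, the $P^1\times P^1$ contribution is annihilated by (\ref{orthogonal_space}), while the $f-f_1$ remainder is bounded by $h^3$. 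Term $\mathrm{(II)}$ becomes a surface integral after rewriting $a_h$ through Green's formula as in (\ref{equation:a_hK}); here the surface orthogonal condition (\ref{orthogonal_plane}), together with the fact that $\nabla u_h\cdot\textbf{n}$ is linear on each triangular face, removes the leading face contributions.

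The hard part will be the interaction between $\mathrm{(I)}$ and $\mathrm{(II)}$ through the edge-quadratic part of $w_I$: a naive elementwise estimate of each term separately leaves an $O(h^2)$ remainder (the pairing of $f_1$, respectively $\nabla u_h\cdot\textbf{n}$, with the quadratic correction of $w_I$), which is one power of $h$ short. The delicate point is to show that these $O(h^2)$ pieces cancel, leaving only $O(h^3\|u\|_4\|w\|_2)$. To realize this I would first replace $u_h$ by $u$ in $\mathrm{(II)}$, absorbing the difference via $|u-u_h|_1\lesssim h^2|u|_3$ and a boundedness estimate for the consistency functional, and then carry out an element-by-element comparison of $a_h(u,\Pi_\lambda^{*}w_I)$ and $a(u,w_I)$ in volume coordinates, invoking both orthogonal conditions to cancel the matching surface and volume terms. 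Collecting all bounds and using $\|w\|_2\lesssim\|e\|_0$ gives $\|e\|_0^2\lesssim h^3\|u\|_4\,\|e\|_0$, which after dividing by $\|e\|_0$ yields (\ref{eq:thm:L2}). The bookkeeping of this cancellation, rather than any single inequality, is the main obstacle.
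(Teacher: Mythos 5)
Your setup (the dual problem, its $H^2$ regularity, and splitting off the interpolation error of the dual solution) matches the paper, but your choice of interpolant creates the very obstacle you then leave unresolved, and that is a genuine gap. The orthogonal conditions (\ref{orthogonal_plane}) and (\ref{orthogonal_space}) only annihilate expressions of the form $v-\Pi_{\lambda}^{*}v$ with $v\in P^{1}$; by interpolating the dual solution with the \emph{quadratic} interpolant $w_I$, you produce the edge-bubble part $q=w_I-\Pi_{h}^{1}w_I$, whose pairings with $f$ and with $\nabla u_h\cdot\textbf{n}$ are not covered by either condition and are generically only $O(h^{2})$. You acknowledge this and assert that the $O(h^{2})$ pieces in $\mathrm{(I)}$ and $\mathrm{(II)}$ cancel, but you never establish the cancellation---you explicitly call its bookkeeping ``the main obstacle''---so the argument is incomplete exactly at the step carrying all the difficulty. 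Moreover, the mechanism you sketch for it is quantitatively insufficient: replacing $u_h$ by $u$ in $\mathrm{(II)}$ and invoking a continuity bound of the type (\ref{continue}) costs $\big(|u-u_h|_{1}+h|u-u_h|_{2,h}\big)\,|w_I|_{1}$, and since $|w_I|_{1}\sim\|w\|_{2}$ carries no power of $h$, this yields only $O(h^{2})\,|u|_{3}\|w\|_{2}$, one order short. (The cancellation can in fact be rescued, but by a different grouping than the one you propose: writing $(f,q-\Pi_{\lambda}^{*}q)=a(u,q)-a_{h}(u,\Pi_{\lambda}^{*}q)$ and combining with the $q$-part of $\mathrm{(II)}$ gives $a(e,q)-a_{h}(e,\Pi_{\lambda}^{*}q)$ with $e=u-u_h$, and each of these two terms is then \emph{separately} $O(h^{3})$, because $|q|_{1}\lesssim h\|w\|_{2}$ supplies the missing power of $h$ together with $|e|_{1}\lesssim h^{2}|u|_{3}$ and $|e|_{2,h}\lesssim h|u|_{3}$; this estimate is what your plan is missing.)

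The paper dissolves the problem before it arises by interpolating the dual solution with the piecewise \emph{linear} operator $\Pi_{h}^{1}$ rather than the quadratic one. In the decomposition (\ref{E1+E2+E3}), the correction $M_{1}^{\ast}w_{g}=\Pi_{h}^{1}w_{g}-\Pi_{\lambda}^{\ast}(\Pi_{h}^{1}w_{g})$ is elementwise exactly of the admissible form $v-\Pi_{\lambda}^{*}v$ with $v\in P^{1}$, so the volume condition kills the linear part of $-\overline{\kappa}\,\Delta(u-u_{h})$ (note $\Delta u_{h}$ is constant and $\Pi_{h}^{1}\Delta u$ is linear, leaving only the $O(h^{2})$ remainder $\Delta u-\Pi_{h}^{1}\Delta u$), and the surface condition kills $(\tilde{\kappa}\nabla u_{h})\cdot\textbf{n}$, which is linear on each face; no quadratic leftover ever appears. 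Since the dual solution has only $H^{2}$ regularity, the linear interpolant loses nothing in approximation order, so it is the choice tailored to the $P^{1}$ form of the orthogonal conditions; your variable-coefficient terms would also need the paper's $\overline{\kappa}$, $\tilde{\kappa}$ splittings, which your proposal does not address.
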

\begin{proof}
Consider an auxiliary problem: given $ g\in L^{2}(\Omega) $, find $ \omega_{g}\in H_{0}^{1}(\Omega) $ such that
\begin{equation} \label{eq:dual}
a(v,\omega_{g})=(g,v),\quad \forall v\in H^{1}_{0}(\Omega),
\end{equation}
where
$ a(v,\omega_{g})=\int\!\!\!\int\!\!\!\int_{\Omega} (\kappa\nabla v)\cdot \nabla \omega_{g} \,\mathrm{d} x_1  \mathrm{d} x_2 \mathrm{d} x_3 $.
It is well known that this problem is regular, i.e., it attains a unique solution $\omega_{g}\in H_{0}^{1}(\Omega)\cap H^{2}(\Omega)$ satifying $\|\omega_{g}\|_{2} \lesssim\| g\|_{0}$.

Let $ v=u-u_{h} $ in (\ref{eq:dual}). By the orthogonality (\ref{orthogonality:a_h}) and the Green's formula, we have
\begin{align}
(g,u-u_{h})=a(u-u_{h},w_{g})&=a(u-u_{h},w_{g}-\Pi_{h}^{1}w_{g})+a(u-u_{h},\Pi_{h}^{1}w_{g})-a_h(u-u_{h},\Pi_{\lambda}^{\ast}(\Pi_{h}^{1}w_{g}))\nonumber\\
&=E_{1}+E_{2}+E_{3},\label{E1+E2+E3}
\end{align}
and
\begin{equation*}
\begin{split}
E_{1 } &=a(u-u_{h},w_{g}-\Pi_{h}^{1}w_{g}),\\
E_{2}&=\sum_{K \in \mathcal{T}_{h}} \iiint_{K}-\nabla\cdot\big(\kappa\nabla (u-u_{h})\big)\, \big(\Pi_{h}^{1}w_{g}-\Pi_{\lambda}^{\ast}(\Pi_{h}^{1}w_{g})\big) \, \mathrm{d} x_1 \mathrm{d} x_2 \mathrm{d} x_3,\\
E_{3}&=\sum_{K \in \mathcal{T}_{h}}\iint_{\partial K}(\kappa\nabla (u-u_{h}))\cdot  \textbf{n}\, (\Pi_{h}^{1}w_{g}-\Pi_{\lambda}^{\ast}(\Pi_{h}^{1}w_{g}))  \,\mathrm{d}S,
\end{split}
\end{equation*}	
where $\Pi_{h}^{1}$ is  the piecewise linear interpolation projection over $ \mathcal{T}_{h} $. For  convenience of writing, let
$M_{1}^{\ast}w_{g}=\Pi_{h}^{1}w_{g}-\Pi_{\lambda}^{\ast}(\Pi_{h}^{1}w_{g})$.
Consider $ E_{2}=E_{21}+E_{22} $ and $ E_{3}=E_{31}+E_{32} $ with
\begin{equation*}
\begin{split}
E_{21}&=\sum_{K \in \mathcal{T}_{h}}^{}  \iiint_{K}-\nabla\cdot\big((\kappa-\overline{\kappa})\nabla (u-u_{h})\big)\, M_{1}^{\ast}w_{g} \, \mathrm{d} x_1 \mathrm{d} x_2 \mathrm{d} x_3,\\
E_{22}&=\sum_{K \in \mathcal{T}_{h}}^{}  \iiint_{K}-\nabla\cdot\big(\overline{\kappa}\,\nabla (u-u_{h})\big) \, M_{1}^{\ast}w_{g}  \, \mathrm{d} x_1 \mathrm{d} x_2 \mathrm{d} x_3,\\
E_{31}&=\sum_{K \in \mathcal{T}_{h}}\iint_{\partial K}\big((\kappa-\tilde{\kappa})\nabla (u-u_{h})\big)\cdot  \textbf{n}\, M_{1}^{\ast}w_{g} \,\mathrm{d}S,\\
E_{32}&=\sum_{K \in \mathcal{T}_{h}}\iint_{\partial K}\big(\tilde{\kappa}\,\nabla (u-u_{h})\big)\cdot  \textbf{n}\, M_{1}^{\ast}w_{g}  \,\mathrm{d}S,
\end{split}
\end{equation*}
where $ \overline{\kappa} $ and $ \tilde{\kappa} $ are two piecewise constant functions that for $ K\in \mathcal{T}_{h} $, $\overline{\kappa}\vert_{K}=|K|^{-1}\int\!\!\!\int\!\!\!\int_{K}\kappa\,\mathrm{d} x_1  \mathrm{d} x_2 \mathrm{d} x_3 $, and $\tilde{\kappa}\vert_{T_{i}}=|T_{i}|^{-1}\int\!\!\!\int\!\!\!\int_{T_{i}}\kappa\,\mathrm{d} S\,\,\,\forall T_{i}\in \partial K$.

It follows from the proof of Theorem~5.3 in \cite{WangX2016} that
\begin{equation}\label{E1,E21,E31}
\begin{split}
|E_{1}|\lesssim h^{3} |u|_{3}|w_{g}|_{2},\,\,\,|E_{21}|\lesssim h^{3} |u|_{3}|w_{g}|_{1},\,\,\,|E_{31}|\lesssim h^{3} |u|_{3}|w_{g}|_{1}.
\end{split}
\end{equation}
%    $ E_{1} $ can be easily estimated by
%	\begin{equation}\label{E1}
%	\begin{split}
%	|E_{1}|\lesssim |u-u_{h}|_{1}|w_{g}-\Pi_{h}^{1}w_{g}|_{1}\lesssim h^{3} |u|_{3}|w_{g}|_{2}.
%	\end{split}
%	\end{equation}}	
The orthogonal condition on the volume (\ref{orthogonal_space}) is used to estimate $ E_{22} $. Noticing that $\frac{\partial^{2} u_{h}}{\partial x_{i}^{2}}-\Pi_{h}^{1}\frac{\partial^{2} u}{\partial x_{i}^{2}}$  $(i \in \mathcal{Z}_{3}^{(1)})$ restricted on $ K $ are linear functions, we obtain
\begin{align}
|E_{22}|\leq&\big| \sum_{K \in \mathcal{T}_{h}}\sum_{i \in \mathcal{Z}_{3}^{(1)}}  \overline{\kappa}\,\iiint_{K}\frac{\partial^{2} (u-u_{h})}{\partial x_{i}^{2}} \,M_{1}^{\ast}w_{g} \, \mathrm{d} x_1 \mathrm{d} x_2 \mathrm{d} x_{3}\big|\nonumber\\
\lesssim&\sum_{K \in \mathcal{T}_{h}}\|\frac{\partial^{2} u}{\partial x_{i}^{2}}-\Pi_{h}^{1}\frac{\partial^{2} u}{\partial x_{i}^{2}}\|_{0,K}\| M_{1}^{\ast}w_{g} \|_{0,K}
\lesssim \,h^{3}|u|_{4}|w_{g}|_{1}.\label{E22}
\end{align}
The orthogonal condition on the surface (\ref{orthogonal_plane}) is used to estimate $ E_{32} $. By the boundary condition $ w_{g}\vert_{\partial \Omega}=0 $, we have
\begin{equation*}
\sum_{K \in \mathcal{T}_{h}}\iint_{\partial K}(\tilde{\kappa}\,\nabla u)\cdot  \textbf{n}\, M_{1}^{\ast}w_{g}  \,\mathrm{d}S=\iint_{\partial \Omega}(\tilde{\kappa}\,\nabla u)\cdot  \textbf{n}\, M_{1}^{\ast}w_{g}  \,\mathrm{d}S=0,
\end{equation*}
which together with (\ref{orthogonal_plane}) yields
\begin{equation}\label{E32}
E_{32}=\sum_{K \in \mathcal{T}_{h}}\iint_{\partial K}(\tilde{\kappa}\,\nabla u)\cdot  \textbf{n}\, M_{1}^{\ast}w_{g}  \,\mathrm{d}S-\sum_{K \in \mathcal{T}_{h}}\iint_{\partial K}(\tilde{\kappa}\,\nabla u_{h})\cdot  \textbf{n}\, M_{1}^{\ast}w_{g}  \,\mathrm{d}S=0.
\end{equation}

Let $ g=u-u_{h} $ in (\ref{E1+E2+E3}). With the estimates (\ref{E1,E21,E31}), (\ref{E22}), (\ref{E32}), and the regularity $\|\omega_{g}\|_{2} \lesssim\| g\|_{0}$, we obatin the $ L^{2} $ error estimate (\ref{eq:thm:L2}). 
 \end{proof}
The above optimal $ L^{2} $ error estimate
strongly depends on the orthogonal conditions on the surface (\ref{orthogonal_plane}) and volume (\ref{orthogonal_space}).
We would like to point out that  the orthogonal conditions are sufficient conditions in the proof of $ L^{2} $ error estimate, however,
a large number of experiments indicates that they are necessary to achieve optimal  convergence rate in $ L^{2} $ norm, which are shown in Section~\ref{Section:5}.

\section{Numerical experiments}\label{Section:5}
\begin{table}[ht!]	
\renewcommand\arraystretch{1.8}
\centering
{\small 
\begin{tabular}{|c|c|c|c|c|} \hline
%			\specialrule[1em]{1pt}{1pt}
Scheme&$\alpha$ &$\beta$ & $\gamma$ &  $ v^{*}\big(\alpha,\beta,\gamma,1/(1-3\alpha\beta)\big) $\\ \hline
QFVS-1&$1/10$&$ 14/15-2\sqrt{66}/45 $&$ 0.050667311760225 $ &  $ 20.5^{\circ} $\\ \hline
QFVS-2&$1/2-\sqrt{3}/6$&$ 2/3+\sqrt{3}/9-\sqrt{21+6\sqrt{3}}/9 $&$ 0.052883196779577 $& $ 17.0^{\circ} $\\  \hline
QFVS-3&$2/5$&$ 11/15-2\sqrt{714}/45 $&$ 0.051085694878555  $ & $ 16.7^{\circ} $\\  \hline
QFVS-4&$1/2-\sqrt{3}/6$&$ 2/3+\sqrt{3}/9-\sqrt{21+6\sqrt{3}}/9 $&$ 1/4 $ & $ 18.2^{\circ} $\\  \hline
%			FV3D-6&$\frac{2}{5}$&$ \frac{11}{15}-\frac{2\sqrt{714}}{45} $&$ \frac{1}{2}$ &$1.2$ & $ 22.6^{\circ} $\\ \hline	
\end{tabular}
}
\caption{Four quadratic FVM schemes.}
\label{tab1}
\end{table}
%See Table~\ref{tab1}, we choose four quadratic FVM schemes satisfying the orthogonal condition of surface (\ref{orthogonal_plane_para_t}), and two different $ \lambda $ in the range (\ref{p}). Note that the first three schemes also meet the orthogonal condition of volume (\ref{orthogonal_space_para}). The corresponding $ v^{*}(\alpha,\beta,\gamma,\lambda)s $ obtained from  \textbf{Algorithm}~\ref{searching v} are shown in the last column of Table~\ref{tab1}. 
Here in Table~\ref{tab1}, we present four quadratic FVM schemes, which will be used in the numerical experiments. The first three schemes satisfy the orthogonal conditions on the surface (\ref{orthogonal_plane_para}) and volume (\ref{orthogonal_space_para}), while the last scheme only satisfies the orthogonal condition on the surface (\ref{orthogonal_plane_para}). 
\begin{figure}[ht!]
\centering
\subfigure[Scheme QFVS-1.]{
\includegraphics[width=150pt]{./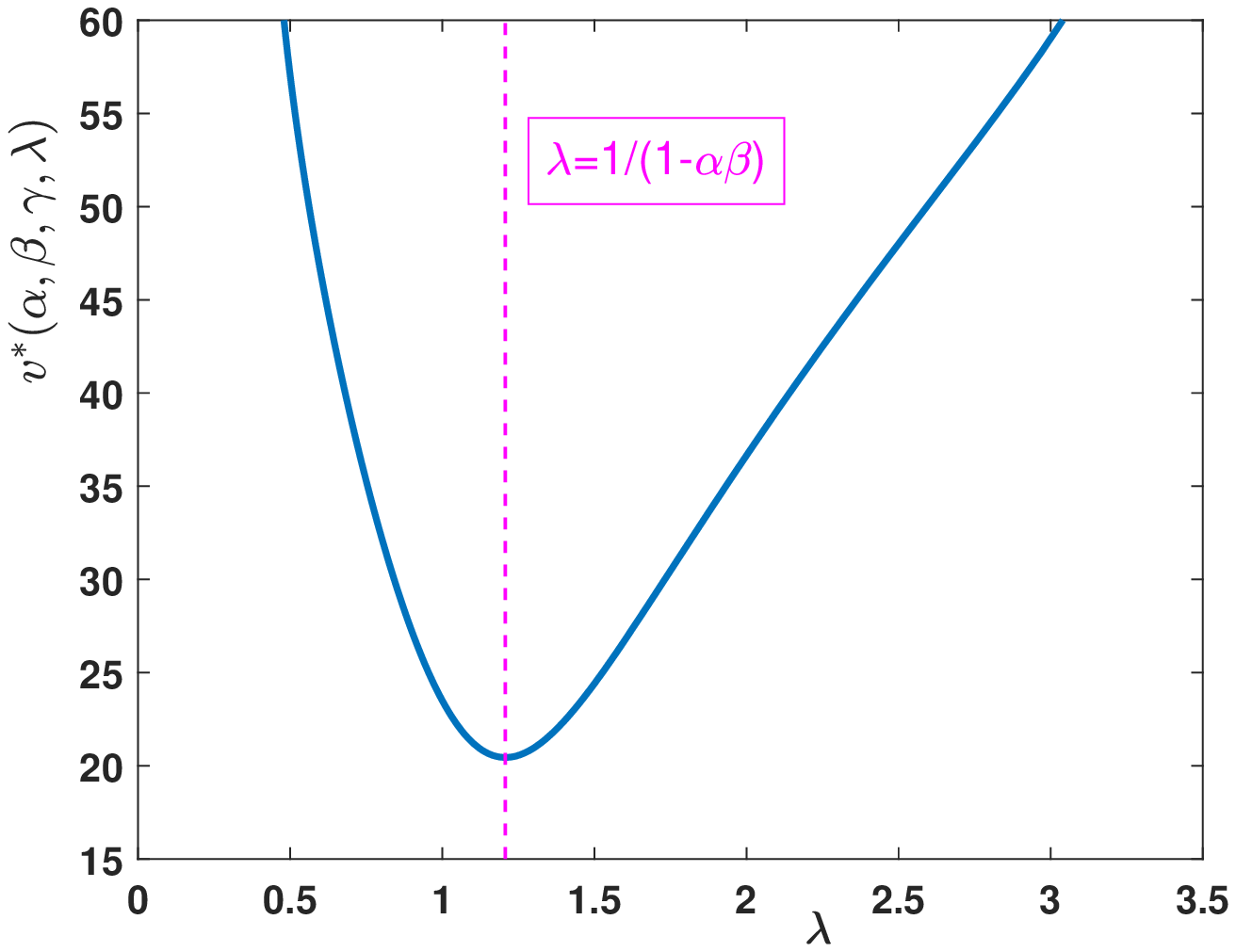}
}
\subfigure[Scheme QFVS-2.]{
\includegraphics[width=150pt]{./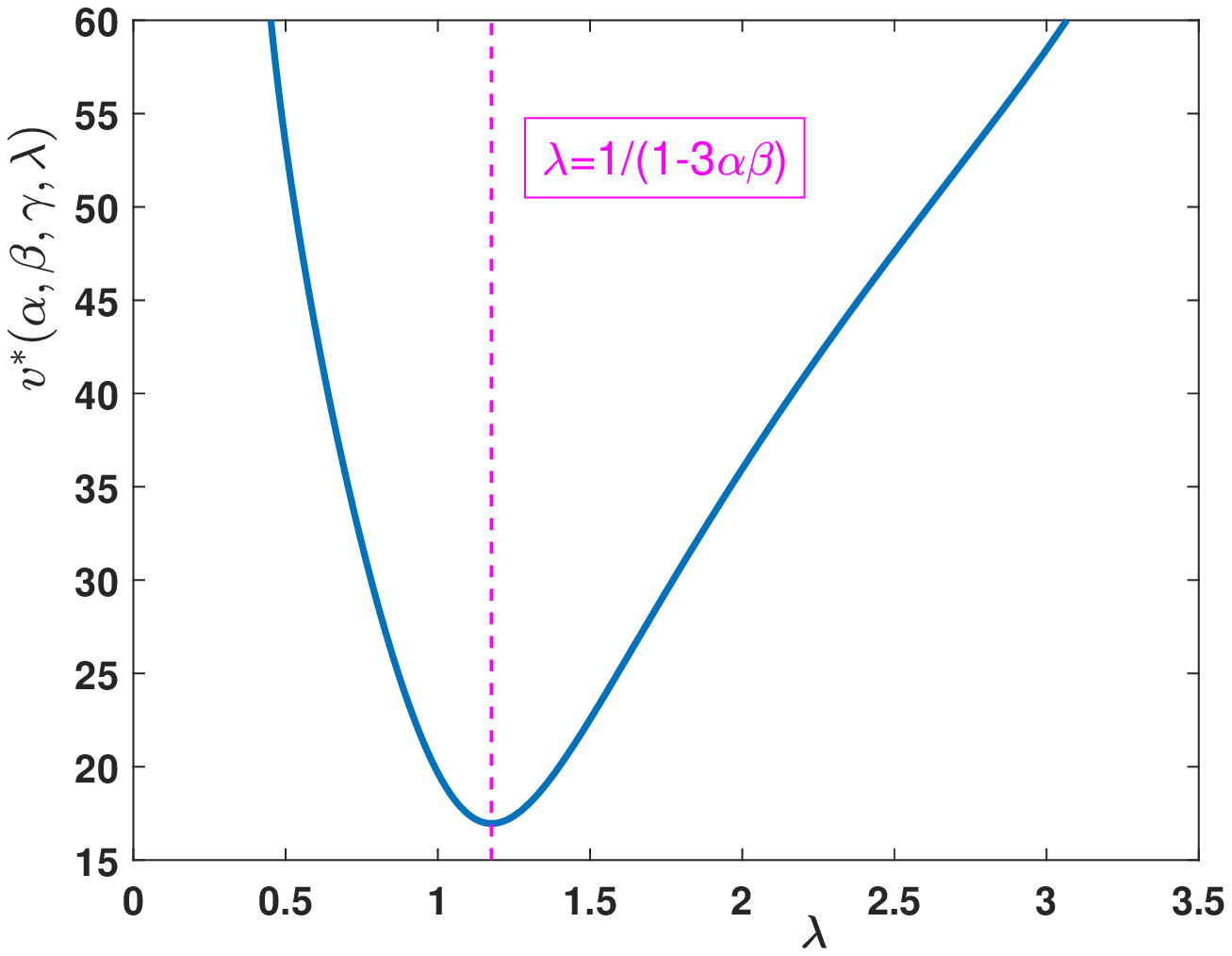}
}
\subfigure[Scheme QFVS-3.]{
\includegraphics[width=150pt]{./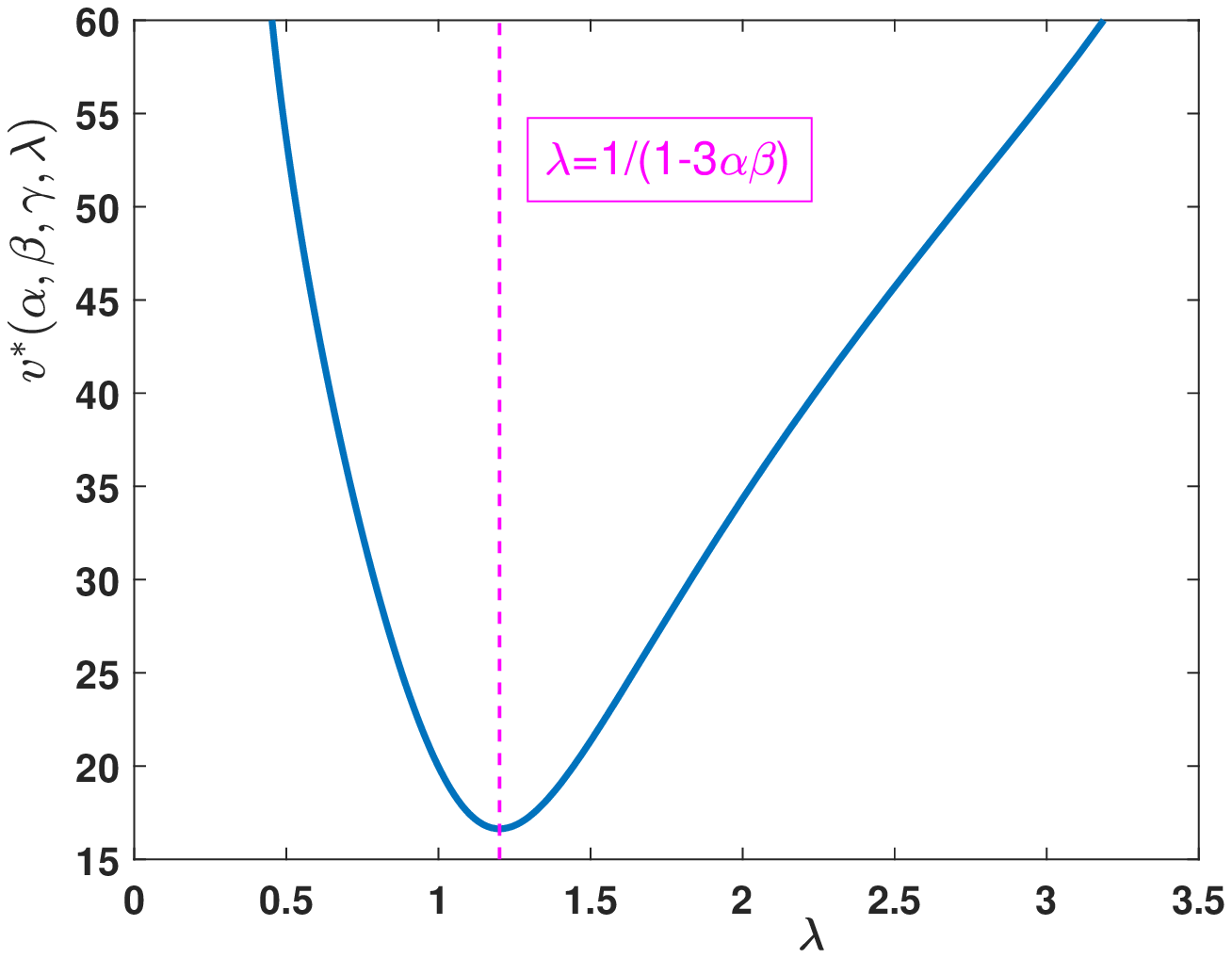}
}
\subfigure[Scheme QFVS-4.]{
\includegraphics[width=150pt]{./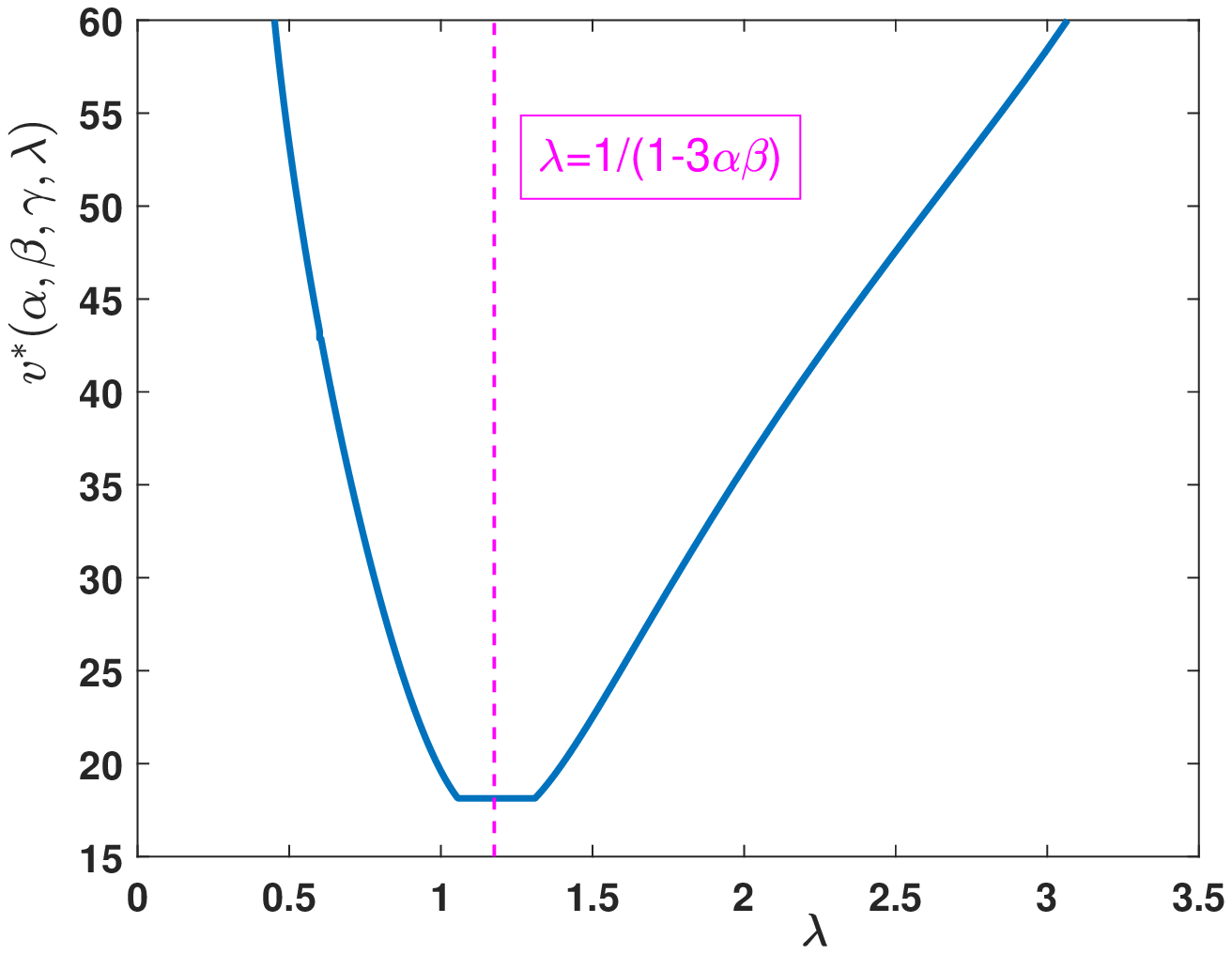}
}
\caption{The relationships between  $ v^{*}(\alpha,\beta,\gamma,\lambda) $ and  $ \lambda $.}
\label{fig:graph}
\end{figure}   %ctrl+T(U)
\begin{example}According to \textbf{Algorithm}~\ref{searching v}, we show in Fig.~\ref{fig:graph} the relationships between  $ v^{*}(\alpha,\beta,\gamma,\lambda) $ and  $ \lambda $  for the schemes in Table~\ref{tab1}. From the four figures, $ v^{*}(\alpha,\beta,\gamma,\lambda) $ reaches its minimum value at $ \lambda=1/(1-3\alpha\beta) $. The values of $ v^{*}\big(\alpha,\beta,\gamma,1/(1-3\alpha\beta)\big) $  are given in  the last column of Table~\ref{tab1}.
\end{example}
%The numerical performances of $ v^{*}(\alpha,\beta,\gamma,\lambda) $ seem to indicate that $ \lambda=\frac{1}{1-3\alpha\beta} $ is the point where $ v^{*}(\alpha,\beta,\gamma,\lambda) $ reaches its minimum value.  The last column of Table~\ref{tab1} shows the values of $ v^{*}(\alpha,\beta,\gamma,\frac{1}{1-3\alpha\beta}) $. 
% For each of the four schemes, we choose the smaller $ v^{*}(\alpha,\beta,\gamma,\lambda)$ in the minimum V-angle condition (\ref{restriction_3}).
%
%\begin{remark}
%	Table~\ref{tab1} shows $ v^{*}(\alpha,\beta,\gamma,\lambda)s$ with $ \lambda=\frac{1}{1-3\alpha\beta} $ are better than those with $ \lambda=1 $ corresponding to the traditional mapping $ \Pi_{1}^{*} $ or $ \Pi_{h}^{*} $ (see \cite{Li2000}).
%\end{remark} 
\begin{figure}[ht!]
\centering
\begin{minipage}[t]{.8\textwidth}
\centering
\includegraphics[width=80pt]{./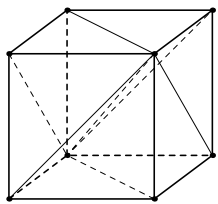}
\caption{A division of a cube.}
\label{fig:partition}
\end{minipage}
\end{figure}   %ctrl+T(U)
%Obviously,  $\mathcal{T}_{h} $ satisfies the minimum V-angle condition (\ref{restriction_3}) for the four  schemes in Table~\ref{tab1}.
\begin{table}[ht!]
\centering
\begin{tabular}{|l|l|l|l|l|l|l|l|l|} \hline
\multirow{2}{*}{N}&
\multicolumn{2}{c|}{QFVS-1}&
\multicolumn{2}{c|}{QFVS-2}&
\multicolumn{2}{c|}{QFVS-3}&
\multicolumn{2}{c|}{QFVS-4} \\ 
\cline{2-9}
&$|u-u_h|_{1}$ & Order&$|u-u_h|_{1}$ &Order&$|u-u_h|_{1}$ &Order
&$|u-u_h|_{1}$ &Order\\ \hline
5 &1.1526e-01& $\setminus$ &1.1454e-01& $\setminus$&1.1478e-01& $\setminus$&1.1294e-01&$\setminus$  \\ \hline
15&1.3110e-02&1.9787&1.3098e-02&1.9738&1.3102e-02&1.9755 &1.3105e-02&1.9605\\ \hline

25&4.7300e-03&1.9956&4.7285e-03&1.9946&4.7289e-03&1.9949 &4.7380e-03&1.9917\\ \hline

35&2.4148e-03&1.9981&2.4144e-03&1.9976&2.4145e-03&1.9978 &2.4203e-03&1.9964\\ \hline

45&1.4612e-03&1.9989&1.4611e-03&1.9987&1.4611e-03&1.9988 &1.4649e-03&1.9980\\ \hline
\end{tabular}
\caption{$H^{1}$  convergence rate of Example~\ref{example}.}
\label{tab:H1(example_1)}
\end{table}
\begin{table}[ht!]
\centering
\begin{tabular}{|l|l|l|l|l|l|l|l|l|} \hline
\multirow{2}{*}{N}&
\multicolumn{2}{c|}{QFVS-1}&
\multicolumn{2}{c|}{QFVS-2}&
\multicolumn{2}{c|}{QFVS-3}&
\multicolumn{2}{c|}{QFVS-4}\\ 
\cline{2-9}
&$\|u-u_h\|_{0}$ & Order&$\|u-u_h\|_{0}$ &Order&$\|u-u_h\|_{0}$ &Order&$\|u-u_h\|_{0}$ &Order\\ \hline
5&2.7360e-03& $\setminus$ &2.7238e-03& $\setminus$&2.7202e-03& $\setminus$&2.4588e-03&$\setminus$  \\ \hline
15&9.7046e-05&3.0394&9.6983e-05&3.0359&9.6969e-05&3.0348&1.8766e-04&2.3418\\ \hline

25&2.0875e-05&3.0081&2.0870e-05&3.0073&2.0869e-05&3.0072&6.5523e-05&2.0599\\ \hline

35&7.5988e-06&3.0034&7.5978e-06&3.0030&7.5976e-06&3.0030&3.3157e-05&2.0243\\ \hline

45&3.5736e-06&3.0019&3.5733e-06&3.0017&3.5733e-06&3.0016&1.9992e-05&2.0132\\ \hline
\end{tabular}
\caption{$L^{2}$  convergence rate  of Example~\ref{example}.}
\label{tab:L2(example_1)}
\end{table}
\begin{example}\label{example}
Consider the model problem (\ref{eq:epselliptic}) with $\Omega=[0,1]^{3}$.  The primary mesh $\mathcal{T}_{h}$ is constructed by dividing $\Omega$ into $ N^{3} $ cubes first, then each cube is divided into six tetrahedrons, see Fig.~\ref{fig:partition}. By definition (\ref{definition:vertex angles}), the uniform minimum  V-angle is $ \min\left\lbrace \theta_K,\,K\in\mathcal{T}_{h}\right\rbrace=\arctan(\sqrt{2}/2)+45^{\circ}-\arctan\sqrt{2}\approx25.529^{\circ} $. Then for the four schemes in Table~\ref{tab1}, it is observed from the last column of Table~\ref{tab1} that  the primary mesh $\mathcal{T}_{h}$ satisfies the minimum V-angle condition (\ref{restriction_3}).

We apply the four schemes in Table~\ref{tab1} to equation (\ref{eq:epselliptic}) with the  coefficient $ \kappa(x_1,x_2,x_3)=e^{x_1+2x_2+3x_3} $,
and $ f $ is chosen so that the exact solution is $ u=\sin(\pi x_1)\sin(\pi x_2)\sin(\pi x_3) $. 
%\paragraph{Example 2}Apply the four schemes in Table~\ref{tab1} to (\ref{eq:epselliptic}) with the piecewise smooth coefficient
%\begin{equation*}
%\kappa(x_1,x_2,x_3)=\left\{\begin{array}{lcl}
%1,& x_1+x_2\leq 1\\
%x_1+x_2,& x_1+x_2> 1
%\end{array} \right.,
%\end{equation*}
%and $ f $ is chosen such that the exact solution is 	$ u=x_1(1-x_1)x_2(1-x_2)x_3(1-x_3) $. 
See Table~\ref{tab:H1(example_1)}, all of the four schemes  possess  optimal $ H^1 $ convergence rate. And Table~\ref{tab:L2(example_1)} shows that the first three schemes  have optimal $ L^2 $ convergence rate, while the last scheme doesn't. These numerical results demonstrate our theoretical results.
\end{example}
\begin{table}[ht!]
\centering
\begin{tabular}{|l|l|l|l|l|l|l|l|l|} \hline
\multirow{2}{*}{N}&
\multicolumn{2}{c|}{QFVS-1}&
\multicolumn{2}{c|}{QFVS-2}&
\multicolumn{2}{c|}{QFVS-3}&
\multicolumn{2}{c|}{QFVS-4}\\ 
\cline{2-9}
&$|u-u_h|_{1}$ & Order&$|u-u_h|_{1}$ &Order&$|u-u_h|_{1}$ &Order
&$|u-u_h|_{1}$ &Order\\ \hline
5 &1.2602e-01& $\setminus$ &1.2090e-01& $\setminus$&1.2429e-01& $\setminus$&1.2061e-01&$\setminus$ \, \\ \hline
15&1.4108e-02&1.9931&1.4116e-02&1.9549&1.4200e-02&1.9746&1.4072e-02&1.9555\\ \hline

25&5.0788e-03&2.0001&5.0835e-03&1.9994&5.0786e-03&2.0129&5.0767e-03&1.9958\\ \hline

35&2.5967e-03&1.9937&2.5947e-03&1.9988&2.5977e-03&1.9924&2.5959e-03&1.9934\\ \hline

45&1.5700e-03&2.0023&1.5688e-03&2.0021&1.5732e-03&1.9955&1.5793e-03&1.9774\\ \hline
\end{tabular}
\caption{$H^{1}$  convergence rate  of Example~\ref{example3}.}
\label{tab:H1(example_2)}
\end{table}
\begin{table}[ht!]
\centering
\begin{tabular}{|l|l|l|l|l|l|l|l|l|} \hline
\multirow{2}{*}{N}&
\multicolumn{2}{c|}{QFVS-1}&
\multicolumn{2}{c|}{QFVS-2}&
\multicolumn{2}{c|}{QFVS-3}&
\multicolumn{2}{c|}{QFVS-4}\\ 
\cline{2-9}
&$\|u-u_h\|_{0}$ & Order&$\|u-u_h\|_{0}$ &Order&$\|u-u_h\|_{0}$ &Order&$\|u-u_h\|_{0}$ &Order\\ \hline
5&3.7870e-03& $\setminus$ &3.3785e-03& $\setminus$&3.4369e-03& $\setminus$&3.1009e-03&$\setminus$ \\ \hline
15&1.2821e-04&3.0818&1.2956e-04&2.9683&1.3487e-04&2.9474&2.0669e-04&2.4652\\ \hline

25&2.6984e-05&3.0508&2.7407e-05&3.0409&2.6863e-05&3.1588&6.9489e-05&2.1339\\ \hline

35&9.8073e-05&3.0081&9.8279e-05&3.0480&9.8367e-05&2.9857&3.4767e-05&2.0582\\ \hline

45&4.6334e-06&2.9837&4.6514e-06&2.9766&4.6999e-06&2.9389&2.0862e-05&2.0323\\ \hline
\end{tabular}
\caption{$L^{2}$  convergence rate of Example~\ref{example3}.}
\label{tab:L2(example_2)}
\end{table}
%\begin{table}[ht!]
%	\centering
%	\begin{tabular}{|l|l|l|l|l|} \hline
%		\multirow{2}{*}{N}&
%		\multicolumn{4}{c|}{$ \min\left\lbrace \theta_K,\, K \,\mbox{in random tetrahedral meshes}\right\rbrace $}\\ 
%		\cline{2-5}
%		&QFVS-1&QFVS-2&QFVS-3&QFVS-4\\\hline
%		5&$ 2.883^{\circ} $&$ 4.096^{\circ} $&$3.091^{\circ} $&$0.535^{\circ} $ \\ \hline
%		15&$1.848^{\circ} $&$1.415^{\circ} $&$2.079^{\circ} $&$1.583^{\circ} $ \\ \hline
%		
%		25&$0.695^{\circ} $&$0.631^{\circ} $&$1.069^{\circ} $&$0.931^{\circ} $ \\ \hline
%		
%		35&$0.554^{\circ} $&$0.539^{\circ} $&$0.830^{\circ} $&$0.251^{\circ} $ \\ \hline
%		
%		45&$0.204^{\circ} $&$0.432^{\circ} $&$0.684^{\circ} $&$0.536^{\circ} $ \\ \hline
%	\end{tabular}
%	\caption{The uniform minimum V-angles of random tetrahedral meshes.}
%	\label{example3:V_angle}
%\end{table}
\begin{example}[Random mesh]\label{example3}
To further	illustrate the performance of the proposed quadratic FVM schemes, we
disturb the vertices of the tetrahedrons in Example~\ref{example} with random rate 
$ 0.2/N$ in three directions, in which
\begin{itemize}
\item[i)] the eight vertices of $ \Omega=[0,1]^{3} $ are fixed;
\item[ii)] the vertices on the twelve edges of $ \Omega=[0,1]^{3} $ are repositioned along the edges;
\item[iii)] the vertices on the six faces of $ \Omega=[0,1]^{3} $ are repositioned on the faces in two directions;
\end{itemize}
For the four schemes in Table~\ref{tab1}, we show in Table~\ref{tab:H1(example_2)} and Table~\ref{tab:L2(example_2)} the numerical perfermances, which are consistent with the numerical perfermances in Example~\ref{example}. 
%We record the uniform minimum V-angles of these random tetrahedral meshes in Table~\ref{example3:V_angle}, and 
\end{example}
\begin{figure}[ht!]
\centering
\subfigure[The same $ (\alpha,\beta) $ as QFVS-1.]{
\includegraphics[width=150pt]{./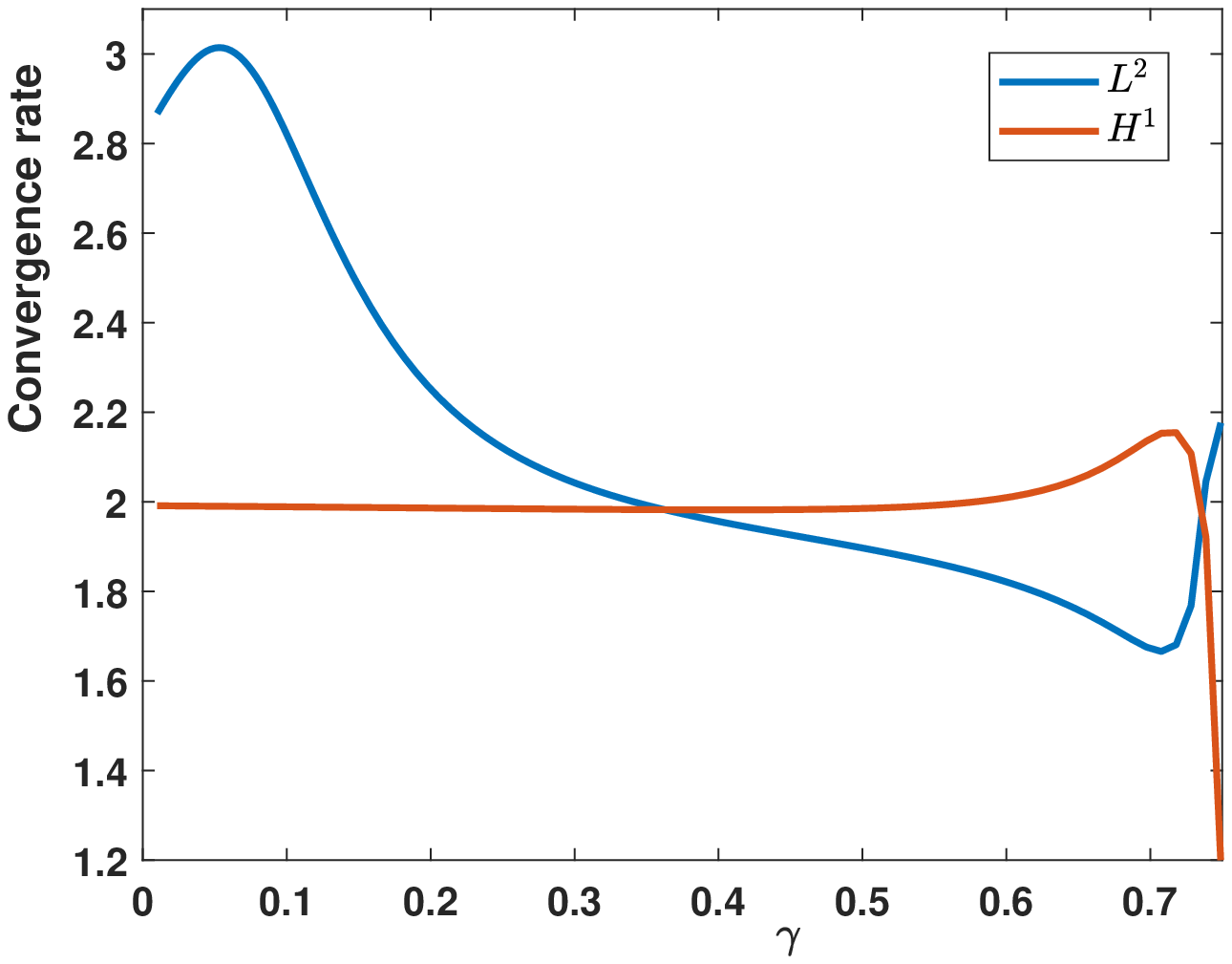}
}
\subfigure[The same $ (\alpha,\beta) $ as QFVS-2.]{
\includegraphics[width=150pt]{./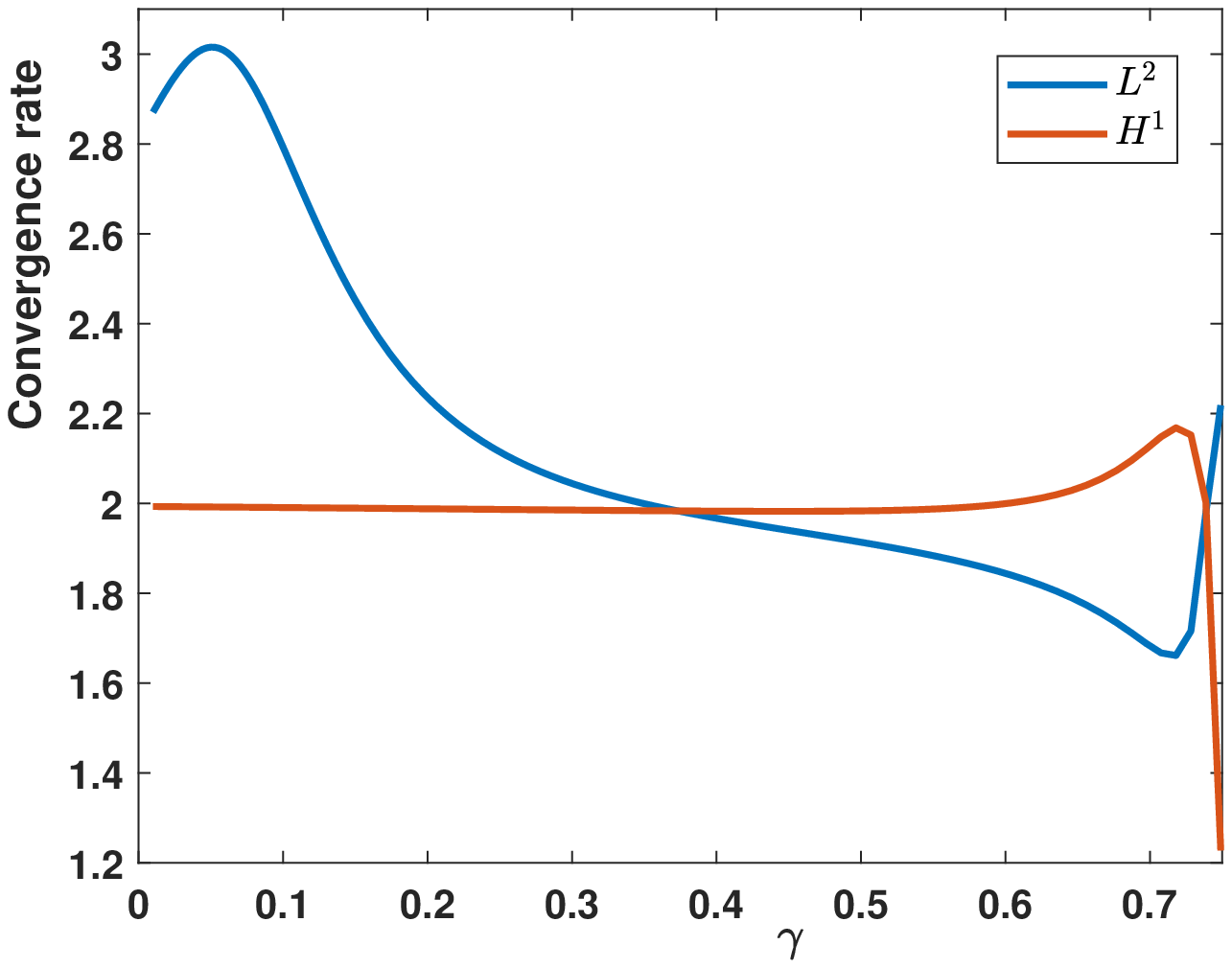}
}
\subfigure[The same $ (\alpha,\beta) $ as QFVS-3.]{
\includegraphics[width=150pt]{./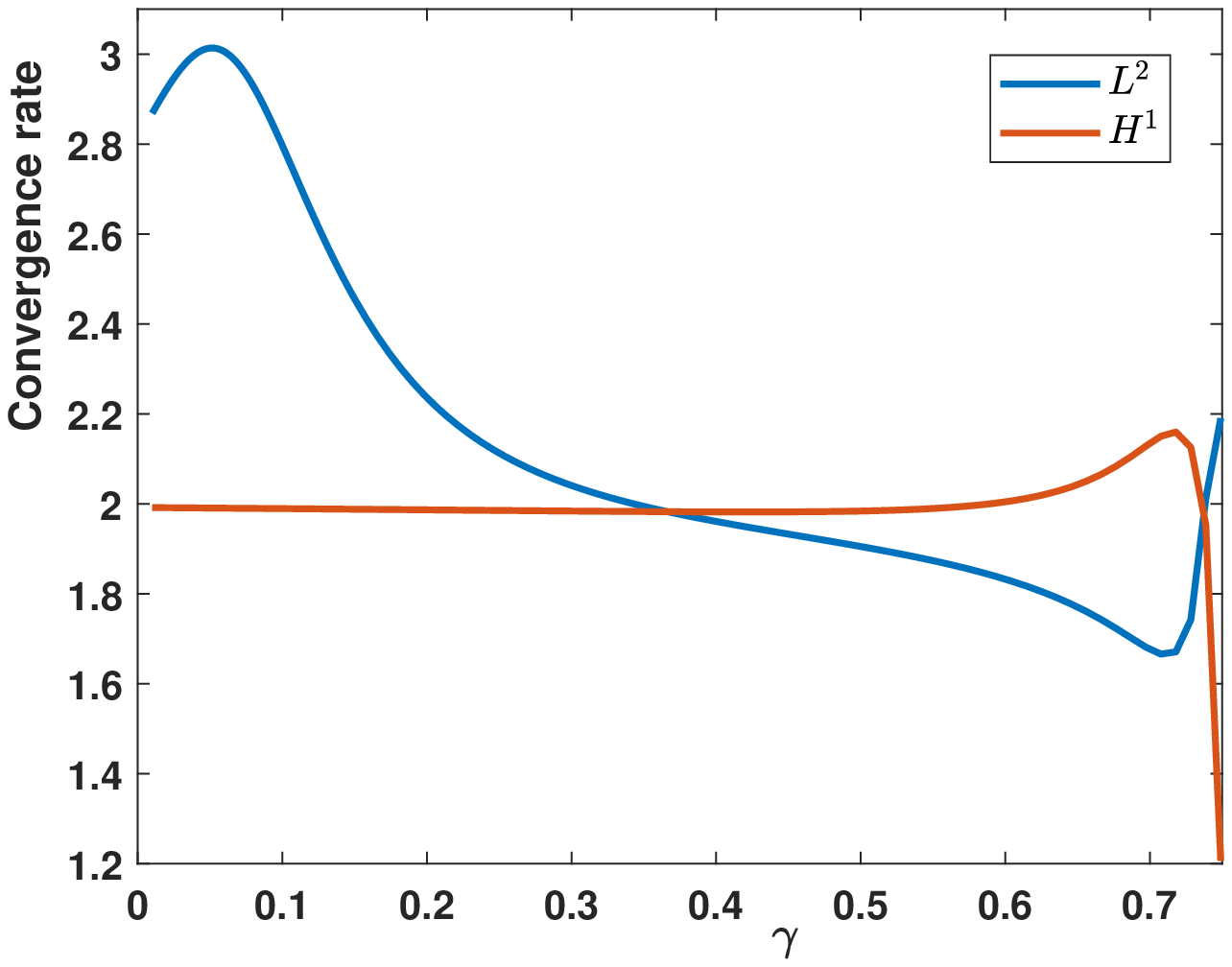}
}
\subfigure[$ (\alpha=0.3, \beta=0.4)$.]{
\includegraphics[width=150pt]{./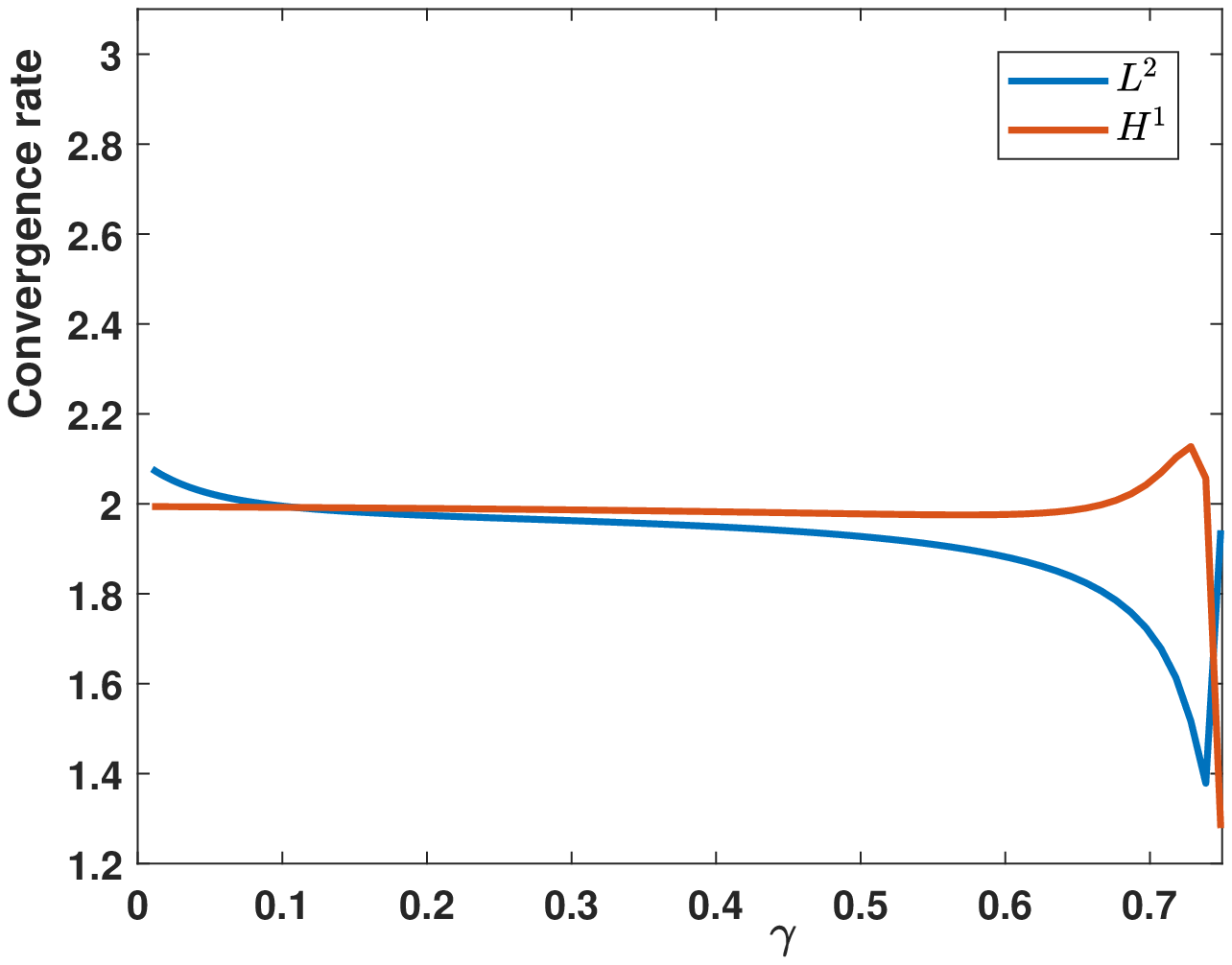}
}
\caption{The relationships between convergence rate and $ \gamma $.}
\label{fig:graph_2}
\end{figure}   %ctrl+T(U)
\begin{remark}
Consider the same model problem as Example~\ref{example}. Then for fixed $ (\alpha,\beta) $, Fig.~\ref{fig:graph_2} shows how convergence rates in $ H^{1} $ and $ L^{2} $ norms between $ N=10 $ and $ N=20 $ change with $ \gamma $, where the first three pairs of $ (\alpha,\beta) $ are the same as those in  QFVS-1, QFVS-2 and QFVS-3 respectively, and the last pair is taken as $ (\alpha=0.3, \beta=0.4) $.  In fact, a large number of experiments indicates that the orthogonal conditions are not only sufficient but also necessary to achieve optimal convergence rate in $ L^{2} $ norm.
\end{remark}

\section{Conclusion}\label{Section:6}
%In this paper, a family of quadratic FVM schemes on tetrahedral meshes are constructed and studied. By proposing the orthogonal condition of surface (\ref{orthogonal_plane_para_t}) on $ \mathcal{T}_{h}^{*} $ and the minimum V-angle condition (\ref{restriction_3}) on $ \mathcal{T}_{h} $, we present the stability and the $ H^{1} $ result.  The $ L^{2} $ error estimate is poved with the help of the orthogonal condition. Some numerical experiments are given to confirm the theoretical reults. To the best of our knowledge, we believe that  the minimum V-angle condition and  the orthogonal condition can be extended to higher order FVM schemes on tetrahedral meshes for the stability and the $ L^{2} $ error estimate respectively. However,  the related analysis is much more challenging.

%A family of quadratic FVM schemes on tetrahedral meshes are constructed and studied in this paper. Under the orthogonal condition of surface (\ref{orthogonal_plane})  and the minimum V-angle condition (\ref{restriction_3}), we prove the stability and $ H^{1}$ estimate. Then, with the help of the orthogonal condition (\ref{orthogonal_plane}) and (\ref{orthogonal_space}), we present the optimal $ L^{2}$ estimate of the quadratic FVM schemes. Numerical experiments are shown to demonstrate the theoretical results.
In this paper,  we have constructed a family of quadratic FVM schemes on tetrahedral meshes by introducing three parameters $ (\alpha,\beta,\gamma) $ on the dual mesh. Under the proposed orthogonal conditions and minimum V-angle condition, we derive the theoretical analysis. This includes
\begin{itemize}
\item Stability, which is the most important result in this paper;
\item Optimal $ H^{1} $ and $ L^{2} $ error estimates, where the $ L^{2} $ convergence rate strongly depends on the orthogonal conditions.
\end{itemize}
These theoretical results are confirmed by some numerical experiments.

For higher $ r $-order ($r\geq3$) FVMs on tetrahedral meshes,  the stability analysis is much more complex and it needs further study.

\appendix
\setcounter{lemma}{0}
\renewcommand{\thelemma}{B\arabic{lemma}}
\section*{Appendix A: Some symbolic matrices}\label{Section:Appendix A}
\subsection*{A.1 The symbolic matrix $ \textbf{A} $ in (\ref{A_k1})}\label{Apsubsection: A_K}
\begin{equation*}
\textbf{A}=\begin{pmatrix}
\textbf{A}^{(1)}_{_{4\!\times\!4}}&\,4(\textbf{A}^{(2)}_{_{6\!\times\!4}})^{T}\\[0.15cm]
\textbf{A}^{(3)}_{_{6\!\times\!4}}&\,4\textbf{A}^{(4)}_{_{6\!\times\!6}}\\
\end{pmatrix},
\end{equation*}
where
\begin{flalign*}
\begin{split}
&\textbf{A}^{(1)}_{_{4\!\times\!4}}\!\!=\!\!{\scriptsize
\left(\begin{array}{cccc} 
(\!3t_{_1}\!\!-\!2t_{_2}\!)R_{_1} &4t_{_2}R_{_2} \!\!+\!(\!t_{_1}\!\!-\!2t_{_2}\!)r_{_{\!34}}&
4t_{_2}R_{_3} \!\!+\!(\!t_{_1}\!\!-\!2t_{_2}\!)r_{_{\!24}}& 4t_{_2}R_{_4} \!\!+\!(\!t_{_1}\!\!-\!2t_{_2}\!)r_{_{\!23}}\\[0.1cm] 
4t_{_2}R_{_1} \!\!+\!(\!t_{_1}\!\!-\!2t_{_2}\!)r_{_{\!34}}&(\!3t_{_1}\!\!-\!2t_{_2}\!)R_{_2} & 
4t_{_2}R_{_3} \!\!+\! (\!t_{_1}\!\!-\!2t_{_2}\!)r_{_{\!14}}&4t_{_2}R_{_4} \!\!+\! (\!t_{_1}\!\!-\!2t_{_2}\!)r_{_{\!13}}\\[0.1cm] 
4t_{_2}R_{_1} \!\!+\! (\!t_{_1}\!\!-\!2t_{_2}\!)r_{_{\!24}}& 4t_{_2}R_{_2} \!\!+\! (\!t_{_1}\!\!-\!2t_{_2}\!)r_{_{\!14}}&
(\!3t_{_1}\!\!-\!2t_{_2}\!)R_{_3}&4t_{_2}R_{_4}\!\!+\!(\!t_{_1}\!\!-\!2t_{_2}\!)r_{_{\!12}}\\[0.1cm]
4t_{_2}R_{_1} \!\!+\! (\!t_{_1}\!\!-\!2t_{_2}\!)r_{_{\!23}}&4t_{_2}R_{_2} \!\!+\! (\!t_{_1}\!\!-\!2t_{_2}\!)r_{_{\!13}}&
4t_{_2}R_{_3} \!\!+\! (\!t_{_1}\!\!-\!2t_{_2}\!)r_{_{\!12}}&(\!3t_{_1}\!\!-\!2t_{_2}\!)R_{_4}
\end{array}\right)},\\[0.15cm]
&\textbf{A}^{(2)}_{_{6\!\times\!4}}\!\!=\!\!{\scriptsize
\left(\begin{array}{cccc} 
-\!t_{_2}(\!R_{_2}\!\!+\!R_{_3}\!\!-\!r_{_{\!12}}\!\!-\!r_{_{\!13}}\!)&t_{_2}R_{_2}\!\!-\!(\!t_{_1}\!\!+\!t_{_2}\!)r_{_{\!14}}&
t_{_2}R_{_3}\!\!-\!(\!t_{_1}\!\!+\!t_{_2}\!)r_{_{\!14}}&-\!t_{_2}(\!R_{_2}\!\!+\!R_{_3}\!\!-\!r_{_{\!24}}\!\!-\!r_{_{\!34}}\!) \\[0.1cm]
t_{_2}R_{_1}\!\!-\!(\!t_{_1}\!\!+\!t_{_2}\!)r_{_{\!24}}&-\!t_{_2}(\!R_{_1}\!\!+\!R_{_3}\!\!-\!r_{_{\!12}}\!\!-\!r_{_{\!23}}\!)&
t_{_2}R_{_3}\!\!-\!(\!t_{_1}\!\!+\!t_{_2}\!)r_{_{\!24}}&-\!t_{_2}(\!R_{_1}\!\!+\!R_{_3}\!\!-\!r_{_{\!14}}\!\!-\!r_{_{\!34}}\!) \\[0.1cm]
t_{_2}R_{_1}\!\!-\!(\!t_{_1}\!\!+\!t_{_2}\!)r_{_{\!34}}&t_{_2}R_{_2}\!\!-\!(\!t_{_1}\!\!+\!t_{_2}\!)r_{_{\!34}}&
-\!t_{_2}(\!R_{_1}\!\!+\!R_{_2}\!\!-\!r_{_{\!13}}\!\!-\!r_{_{\!23}}\!)&-\!t_{_2}(\!R_{_1}\!\!+\!R_{_2}\!\!-\!r_{_{\!14}}\!\!-\!r_{_{\!24}}\!)\\[0.1cm]
t_{_2}R_{_1}\!\!-\!(\!t_{_1}\!\!+\!t_{_2}\!)r_{_{\!23}}&-\!t_{_2}(\!R_{_1}\!\!+\!R_{_4}\!\!-\!r_{_{\!12}}\!\!-\!r_{_{\!24}}\!)&
-\!t_{_2}(\!R_{_1}\!\!+\!R_{_4}\!\!-\!r_{_{\!13}}\!\!-\!r_{_{\!34}}\!)&t_{_2}R_{_4}\!\!-\!(\!t_{_1}\!\!+\!t_{_2}\!)r_{_{\!23}} \\[0.1cm]
-\!t_{_2}(\!R_{_2}\!\!+\!R_{_4}\!\!-\!r_{_{\!12}}\!\!-\!r_{_{\!14}}\!)&t_{_2}R_{_2}\!\!-\!(\!t_{_1}\!\!+\!t_{_2}\!)r_{_{\!13}}&
-\!t_{_2}(\!R_{_2}\!\!+\!R_{_4}\!\!-\!r_{_{\!23}}\!\!-\!r_{_{\!34}}\!)&t_{_2}R_{_4}\!\!-\!(\!t_{_1}\!\!+\!t_{_2}\!)r_{_{\!13}} \\[0.1cm]
-\!t_{_2}(\!R_{_3}\!\!+\!R_{_4}\!\!-\!r_{_{\!13}}\!\!-\!r_{_{\!14}}\!)&-\!t_{_2}(\!R_{_3}\!\!+\!R_{_4}\!\!-\!r_{_{\!23}}\!\!-\!r_{_{\!24}}\!)&
t_{_2}R_{_3}\!\!-\!(\!t_{_1}\!\!+\!t_{_2}\!)r_{_{\!12}}&t_{_2}R_{_4}\!\!-\!(\!t_{_1}\!\!+\!t_{_2}\!)r_{_{\!12}} \end{array}\right)},\\[0.15cm]
&\textbf{A}^{(3)}_{_{6\!\times\!4}}\!\!=\!\!{\scriptsize
\left(\begin{array}{cccc} 
(\!2t_{_3}\!\!+\!t_{_4}\!)R_{_1}\!\!-\!(\!2t_{_3}\!\!-\!3t_{_4}\!)r_{_{\!23}}&(\!2t_{_3}\!\!-\!t_{_4}\!)(\!R_{_2} \!\!-\! r_{_{\!14}}\!)& 
(\!2t_{_3}\!\!-\!t_{_4}\!)(\!R_{_3}\!\!-\! r_{_{\!14}}\!)&(\!2t_{_3}\!\!+\!t_{_4}\!)R_{_4} \!\!-\!(\!2t_{_3}\!\!-\!3t_{_4}\!) r_{_{\!23}}\\[0.1cm]
(\!2t_{_3}\!\!-\!t_{_4}\!)(\!R_{_1}\!\!-\! r_{_{\!24}}\!)&(\!2t_{_3}\!\!+\!t_{_4}\!)R_{_2} \!\!-\!(\!2t_{_3}\!\!-\!3t_{_4}\!) r_{_{\!13}} & 
(\!2t_{_3}\!\!-\!t_{_4}\!)(\!R_{_3}\!\!-\! r_{_{\!24}}\!)&(\!2t_{_3}\!\!+\!t_{_4}\!)R_{_4} \!\!-\!(\!2t_{_3}\!\!-\!3t_{_4}\!) r_{_{\!13}}\\[0.1cm]
(\!2t_{_3}\!\!-\!t_{_4}\!)(\!R_{_1}\!\!-\! r_{_{\!34}}\!)&(\!2t_{_3}\!\!-\!t_{_4}\!)(\!R_{_2} \!\!-\!r_{_{\!34}}\!)& 
(\!2t_{_3}\!\!+\!t_{_4}\!)R_{_3} \!\!-\!(\!2t_{_3}\!\!-\!3t_{_4}\!) r_{_{\!12}} & (\!2t_{_3}\!\!+\!t_{_4}\!)R_{_4} \!\!-\!(\!2t_{_3}\!\!-\!3t_{_4}\!) r_{_{\!12}} \\[0.1cm]
(\!2t_{_3}\!\!-\!t_{_4}\!)(\!R_{_1} \!\!-\! r_{_{\!23}}\!) & (\!2t_{_3}\!\!+\!t_{_4}\!)R_{_2} \!\!-\!(\!2t_{_3}\!\!-\!3t_{_4}\!) r_{_{\!14}} & 
(\!2t_{_3}\!\!+\!t_{_4}\!)R_{_3} \!\!-\!(\!2t_{_3}\!\!-\!3t_{_4}\!) r_{_{\!14}} & (\!2t_{_3}\!\!-\!t_{_4}\!)(\!R_{_4} \!\!-\! r_{_{\!23}}\!)\\ [0.1cm]
(\!2t_{_3}\!\!+\!t_{_4}\!)R_{_1} \!\!-\!(\!2t_{_3}\!\!-\!3t_{_4}\!) r_{_{\!24}} & (\!2t_{_3}\!\!-\!t_{_4}\!)(\!R_{_2} \!\!-\! r_{_{\!13}}\!)& 
(\!2t_{_3}\!\!+\!t_{_4}\!)R_{_3} \!\!-\!(\!2t_{_3}\!\!-\!3t_{_4}\!) r_{_{\!24}} & (\!2t_{_3}\!\!-\!t_{_4}\!)(\!R_{_4} \!\!-\! r_{_{\!13}}\!)\\[0.1cm] 
(\!2t_{_3}\!\!+\!t_{_4}\!)R_{_1} \!\!-\!(\!2t_{_3}\!\!-\!3t_{_4}\!) r_{_{\!34}} & (\!2t_{_3}\!\!+\!t_{_4}\!)R_{_2} \!\!-\!(\!2t_{_3}\!\!-\!3t_{_4}\!)r_{_{\!34}}& 
(\!2t_{_3}\!\!-\!t_{_4}\!)(\!R_{_3} \!\!-\! r_{_{\!12}}\!)&(\!2t_{_3}\!\!-\!t_{_4}\!)(\!R_{_4} \!\!-\!r_{_{\!12}}) \end{array}\right)},\\[0.15cm]
&\textbf{A}^{(4)}_{_{6\!\times\!6}}\!\!=\!\!{\scriptsize
\left(\begin{array}{cccccc} 
t_{_3}\!(\! R_{_1} \!\!+\! R_{_4} \!\!-\! 2r_{_{\!23}}\!) & 
t_{_4}\!r_{_{\!12}}\!\!-\!t_{_3}\!(\!R_{_1} \!\!-\! r_{_{\!23}}\!) & 
t_{_4}\!r_{_{\!13}}\!\!-\!t_{_3}\!(\!R_{_1} \!\!-\! r_{_{\!23}}\!)&
\!\!\!-\!t_{4}(\!R_{_1} \!\!+\! R_{_4}\!)& 
t_{_4}\!r_{_{\!34}}\!\!-\!t_{_3}\!(\!R_{_4} \!\!-\! r_{_{\!23}}\!) & 
t_{_4}\!r_{_{\!24}}\!\!-\!t_{_3}\!(\!R_{_4} \!\!-\! r_{_{\!23}}\!)\\ [0.1cm]
t_{_4}\!r_{_{\!12}}\!\!-\!t_{_3}\!(\!R_{_2} \!\!-\! r_{_{\!13}}\!) & 
t_{_3}\!(\! R_{_2} \!\!+\! R_{_4} \!\!-\! 2r_{_{\!13}}\!) & 
t_{_4}\!r_{_{\!23}}\!\!-\!t_{_3}\!(\!R_{_2} \!\!-\! r_{_{\!13}}\!)&
t_{_4}\!r_{_{\!34}}\!\!-\!t_{_3}\!(\!R_{_4} \!\!-\! r_{_{\!13}}\!) & 
\!\!\!-\!t_{4}(\!R_{_2} \!\!+\! R_{_4}\!) & 
t_{_4}\!r_{_{\!14}}\!\!-\!t_{_3}\!(\!R_{_4} \!\!-\! r_{_{\!13}}\!)\\ [0.1cm]
t_{_4}\!r_{_{\!13}}\!\!-\!t_{_3}\!(\!R_{_3} \!\!-\! r_{_{\!12}}\!) & 
t_{_4}\!r_{_{\!23}}\!\!-\!t_{_3}\!(\!R_{_3} \!\!-\! r_{_{\!12}}\!) & 
t_{_3}\!(\! R_{_3} \!\!+\! R_{_4} \!\!-\! 2r_{_{\!12}}\!)&
t_{_4}\!r_{_{\!24}}\!\!-\!t_{_3}\!(\!R_{_4} \!\!-\! r_{_{\!12}}\!) & 
t_{_4}\!r_{_{\!14}}\!\!-\!t_{_3}\!(\!R_{_4} \!\!-\! r_{_{\!12}}\!) & 
\!\!\!-\!t_{4}(\!R_{_3} \!\!+\! R_{_4}\!)\\[0.1cm]
\!\!\!-\!t_{4}(\!R_{_2} \!\!+\! R_{_3}\!) & 
t_{_4}\!r_{_{\!34}}\!\!-\!t_{_3}\!(\!R_{_3} \!\!-\! r_{_{\!14}}\!) & 
t_{_4}\!r_{_{\!24}}\!\!-\!t_{_3}\!(\!R_{_2} \!\!-\! r_{_{\!14}}\!)&
t_{_3}\!(\! R_{_2} \!\!+\! R_{_3} \!\!-\! 2r_{_{\!14}}\!) & 
t_{_4}\!r_{_{\!12}}\!\!-\!t_{_3}\!(\!R_{_2} \!\!-\! r_{_{\!14}}\!)& 
t_{_4}\!r_{_{\!13}}\!\!-\!t_{_3}\!(\!R_{_3} \!\!-\! r_{_{\!14}}\!)\\ [0.1cm]
t_{_4}\!r_{_{\!34}}\!\!-\!t_{_3}\!(\!R_{_3} \!\!-\! r_{_{\!24}}\!) & 
\!\!\!-\!t_{4}(\!R_{_1} \!\!+\! R_{_3}\!)  & 
t_{_4}\!r_{_{\!14}}\!\!-\!t_{_3}\!(\!R_{_1} \!\!-\! r_{_{\!24}}\!)&
t_{_4}\!r_{_{\!12}}\!\!-\!t_{_3}\!(\!R_{_1} \!\!-\! r_{_{\!24}}\!) & 
t_{_3}\!(\! R_{_1} \!\!+\! R_{_3} \!\!-\! 2r_{_{\!24}}\!) & 
t_{_4}\!r_{_{\!23}}\!\!-\!t_{_3}\!(\!R_{_3} \!\!-\! r_{_{\!24}}\!)\\ [0.1cm]
t_{_4}\!r_{_{\!24}}\!\!-\!t_{_3}\!(\!R_{_2} \!\!-\! r_{_{\!34}}\!) & 
t_{_4}\!r_{_{\!14}}\!\!-\!t_{_3}\!(\!R_{_1} \!\!-\! r_{_{\!34}}\!) & 
\!\!\!-\!t_{4}(\!R_{_1} \!\!+\! R_{_2}\!) &
t_{_4}\!r_{_{\!13}}\!\!-\!t_{_3}\!(\!R_{_1} \!\!-\! r_{_{\!34}}\!) & 
t_{_4}\!r_{_{\!23}}\!\!-\!t_{_3}\!(\!R_{_2} \!\!-\! r_{_{\!34}}\!)& 
t_{_3}\!(\! R_{_1} \!\!+\! R_{_2} \!\!-\! 2r_{_{\!34}}\!) 
\end{array}\right)}.
\end{split} &
\end{flalign*}
\subsection*{A.2 The symbolic matrices
$\textbf{Q}^{(1)}_{_{3\!\times\!3}}  $, $\textbf{Q}^{(2)}_{_{3\!\times\!3}}  $, $ \textbf{L}^{(1)}_{_{3\!\times\!3}} $, $ \textbf{L}^{(2)}_{_{3\!\times\!3}} $, $ \textbf{L}^{(3)}_{_{3\!\times\!3}} $, $ \textbf{L}^{(4)}_{_{3\!\times\!3}} $ in (\ref{part2})}\label{Apsubsection: several complex matrices}
\begin{equation*}
\begin{split}
&\textbf{L}^{(1)}_{_{3\!\times\!3}}=(\frac{s_{_1}}{2}-\frac{s_{_2}+s_{_3}}{2})\textbf{M}^{K}_{_{3\!\times\!3}}+(\frac{3s_{_1}}{20}-\frac{s_{_2}+s_{_3}}{2})\textbf{J}^{(1)}_{_{3\!\times\!3}}+s_{_3}\textbf{D}^{(1)}_{_{3\!\times\!3}}+\frac{s^{*}}{2}\textbf{Q}^{(1)}_{_{3\!\times\!3}},\\
&\textbf{L}^{(2)}_{_{3\!\times\!3}}=(\frac{s_{_1}}{4}-\frac{s_{_2}+s_{_3}}{4})(\textbf{M}^{K}_{_{3\!\times\!3}}\textbf{C}_{_{3\!\times\!3}})-(\frac{3s_{_1}}{20}-\frac{s_{_2}}{2})\textbf{J}^{(2)}_{_{3\!\times\!3}}+\frac{s_{_3}}{2}\textbf{D}^{(2)}_{_{3\!\times\!3}}+\frac{s^{*}}{4}\textbf{Q}^{(2)}_{_{3\!\times\!3}},\\
&\textbf{L}^{(3)}_{_{3\!\times\!3}}=\frac{s_{_2}}{2}(\textbf{C}_{_{3\!\times\!3}}\textbf{M}^{K}_{_{3\!\times\!3}})-\frac{s_{_1}}{20}\textbf{J}^{(1)}_{3\times 3}+\frac{s_{_2}}{2}\textbf{D}^{(1)}_{_{3\!\times\!3}}-\frac{s_{_2}-s_{_3}}{2}\textbf{D}^{(3)}_{_{3\!\times\!3}},\\
&\textbf{L}^{(4)}_{_{3\!\times\!3}}=\frac{s_{_2}}{4}(\textbf{C}_{_{3\!\times\!3}}\textbf{M}^{K}_{_{3\!\times\!3}}\textbf{C}_{_{3\!\times\!3}})+(\frac{s_{_1}}{20}-\frac{s_{_2}}{4})\textbf{J}^{(2)}_{_{3\!\times\!3}}+\frac{s_{_2}}{4}\textbf{D}^{(2)}_{_{3\!\times\!3}}-\frac{s_{_2}-s_{_3}}{4}\textbf{D}^{(4)}_{_{3\!\times\!3}}.
\end{split}
\end{equation*}
where $ s_{1} $, $ s_{2} $, $ s_{3} $, $ s^{*} $, $ \textbf{M}^{K}_{_{3\!\times\!3}} $, $ \textbf{C}_{_{3\!\times\!3}} $ are given by (\ref{s_{_0}-3}) and (\ref{M,C}), and
\begin{gather*}
\textbf{J}^{(1)}_{_{3\!\times\!3}}\!=\!{\footnotesize \begin{pmatrix}
R_{_1}&R_{_2}&R_{_3}\\
R_{_1}&R_{_2}&R_{_3}\\
R_{_1}&R_{_2}&R_{_3}\\
\end{pmatrix}}, \quad
\textbf{J}^{(2)}_{_{3\!\times\!3}}\!=\!{\footnotesize \begin{pmatrix}
r_{_{\!14}}&r_{_{\!24}}&r_{_{\!34}}\\
r_{_{\!14}}&r_{_{\!24}}&r_{_{\!34}}\\
r_{_{\!14}}&r_{_{\!24}}&r_{_{\!34}}\\
\end{pmatrix}},\\	
\textbf{Q}^{(1)}_{_{3\!\times\!3}}= \begin{pmatrix}
\!-\!2R_{_1}&r_{_{\!13}}\!+\!r_{_{\!14}}&r_{_{\!12}}\!+\!r_{_{\!14}}\\
r_{_{\!23}}\!+\!r_{_{\!24}}&\!-\!2R_{_2}&r_{_{\!12}}\!+\!r_{_{\!24}}\\
r_{_{\!23}}\!+\!r_{_{\!34}}&r_{_{\!13}}\!+\!r_{_{\!34}}&\!-\!2R_{_3}\\
\end{pmatrix},\quad
\textbf{Q}^{(2)}_{_{3\!\times\!3}}= \begin{pmatrix}
\!-\!2r_{_{\!14}}\!-\!\!r_{_{\!24}}\!-\!\!r_{_{\!34}}&R_{_1}\!+\!r_{_{\!24}}&R_{_1}\!+\!r_{_{\!34}}\\
R_{_2}\!+\!r_{_{\!14}}&\!-\!r_{_{\!14}}\!-\!2r_{_{\!24}}\!-\!r_{_{\!34}}&R_{_2}\!+\!r_{_{\!34}}\\
R_{_3}\!+\!r_{_{\!14}}&R_{_3}\!+\!r_{_{\!24}}&\!-\!r_{_{\!14}}\!-\!\!r_{_{\!24}}\!-\!\!2r_{_{\!34}}\\
\end{pmatrix},\\
\textbf{D}^{(1)}_{_{3\!\times\!3}}={\footnotesize \begin{pmatrix}
R_{_1}&0&0\\
0&R_{_2}&0\\
0&0&R_{_3}\\
\end{pmatrix}},\quad
\textbf{D}^{(2)}_{_{3\!\times\!3}}={\footnotesize \begin{pmatrix}
r_{_{\!14}}&0&0\\
0&r_{_{\!24}}&0\\
0&0&r_{_{\!34}}\\
\end{pmatrix}},\quad
\textbf{D}^{(3)}_{_{3\!\times\!3}}={\footnotesize \begin{pmatrix}
r_{_{\!23}}&0&0\\
0&r_{_{\!13}}&0\\
0&0&r_{_{\!12}}\\
\end{pmatrix}},\quad
\textbf{D}^{(4)}_{_{3\!\times\!3}}={\footnotesize \begin{pmatrix}
0&r_{_{\!12}}&r_{_{\!13}}\\
r_{_{\!12}}&0&r_{_{\!23}}\\
r_{_{\!13}}&r_{_{\!23}}&0\\
\end{pmatrix}}.
\end{gather*}
\section*{Appendix B: Some relations in a tetrahedron and two related proofs}\label{Section:Appendix B}
\subsection*{B.1 Some relations in a tetrahedron}\label{relations in a tetra}
In this subsection, we discuss some relations in a tetrahedron $ K=\bigtriangleup^{4} P_{1}P_{2}P_{3}P_{4} $ about its plane angles $ \theta_{i_{1},P_{i_{2}}}$  $(\textstyle i_1\in\mathcal{Z}_{3}^{(1)}\!\!,i_2\in\mathcal{Z}_{4}^{(1)} )$ in (\ref{definition:plane angles}), dihedral angles $ \theta_{jk} $  $ \big(\textstyle(j,k)\!\in\!\mathcal{Z}_{4}^{(2)}\big)$, edge lengths $ |\overline{P_{j}P_{k}|}$ $ \big(\textstyle(j,k)\!\in\!\mathcal{Z}_{4}^{(2)}\big)$, and  circumradius $ R_{K} $. 

See Fig.~\ref{fig:relation}, we take a positively oriented orthogonal coordinate system $ (x_1,x_2,x_3) $, such that $ P_{1} $ is the origin,  $ \overline{P_{1}P_{2}} $ on the $ x_{1} $ axis, and the $ x_1P_{1}x_2 $ plane coincides with the plane spanned by $ \overline{P_{1}P_{2}} $ and $ \overline{P_{1}P_{3}} $. Here, $ O, O_1,O_2 $ are the circumcenters of $ \bigtriangleup^{4} P_{1}P_{2}P_{3}P_{4}, \bigtriangleup P_1P_2P_4,\bigtriangleup P_1P_2P_3  $, respectively, and $ M_{12} $ is the midpoint of $\overline{P_{1}P_{2}}  $. 
\begin{figure}[ht!]
\centering
\begin{minipage}[t]{.33\textwidth}
\centering
\includegraphics[width=150pt]{./l_R_relation.eps}
\caption{}
\label{fig:relation}
\end{minipage}
\end{figure}   %ctrl+T(U)
\begin{lemma}\label{lemma:angles}
For given $ K=\bigtriangleup^{4} P_{1}P_{2}P_{3}P_{4} $ (see Fig.~\ref{fig:relation}), its  plane angles and dihedral angles  satisfy 
\begin{align}
&\cos\theta_{12}=\dfrac{\cos\theta_{_{3,P_1}}-\cos\theta_{_{1,P_1}}\cos\theta_{_{2,P_1}}}{\sin\theta_{_{1,P_1}}\sin\theta_{_{2,P_1}}},\label{angle1}\\
&\cos\theta_{_{3,P_1}}=\dfrac{\cos\theta_{_{2,P_1}}\sin\theta_{12}\cos\theta_{13}+\cos\theta_{12}\sin\theta_{13}}{\sqrt{(\cos\theta_{_{2,P_1}}\sin\theta_{12}\cos\theta_{13}+\cos\theta_{12}\sin\theta_{13})^2+\sin^2\theta_{_{2,P_1}}\sin^2\theta_{12}}}.\label{angle2}
\end{align}
\end{lemma}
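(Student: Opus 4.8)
The plan is to treat the two identities separately, deriving the first from the intrinsic geometry of the trihedral angle at $P_1$ and the second from an explicit computation in the coordinate frame of Figure~\ref{fig:relation}. For (\ref{angle1}) I would introduce the unit vectors $\mathbf a,\mathbf b,\mathbf c$ pointing from $P_1$ toward $P_2,P_3,P_4$. By the definition of the plane angles, $\mathbf a\cdot\mathbf b=\cos\theta_{2,P_1}$, $\mathbf a\cdot\mathbf c=\cos\theta_{1,P_1}$ and $\mathbf b\cdot\mathbf c=\cos\theta_{3,P_1}$. The dihedral angle $\theta_{12}$ along the edge $\overline{P_1P_2}$ is exactly the angle between the orthogonal projections of $\mathbf b$ and $\mathbf c$ onto the plane perpendicular to $\mathbf a$. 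Writing $\mathbf b_\perp=\mathbf b-(\mathbf a\cdot\mathbf b)\mathbf a$ and $\mathbf c_\perp=\mathbf c-(\mathbf a\cdot\mathbf c)\mathbf a$, a short calculation gives $\mathbf b_\perp\cdot\mathbf c_\perp=\cos\theta_{3,P_1}-\cos\theta_{1,P_1}\cos\theta_{2,P_1}$ and $|\mathbf b_\perp|\,|\mathbf c_\perp|=\sin\theta_{2,P_1}\sin\theta_{1,P_1}$, so that $\cos\theta_{12}=\mathbf b_\perp\cdot\mathbf c_\perp/(|\mathbf b_\perp|\,|\mathbf c_\perp|)$ is precisely (\ref{angle1}). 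This is nothing but the spherical law of cosines for the solid angle at $P_1$.

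For (\ref{angle2}) I would work in the frame of Figure~\ref{fig:relation}: $P_1$ at the origin, $\overline{P_1P_2}$ on the positive $x_1$-axis, and the base face $P_1P_2P_3$ in the plane $x_3=0$, so that the unit direction toward $P_3$ is $\hat{\mathbf b}=(\cos\theta_{2,P_1},\sin\theta_{2,P_1},0)$. Let $\mathbf c=(c_1,c_2,c_3)$ be the unit direction toward $P_4$ with $c_3>0$. The dihedral angle $\theta_{12}$ along the $x_1$-axis fixes the direction of $(c_2,c_3)$ through $c_2\sin\theta_{12}=c_3\cos\theta_{12}$, while the dihedral angle $\theta_{13}$ along $\overline{P_1P_3}$ furnishes a second linear relation, obtained by resolving $\mathbf c$ into its components along $\hat{\mathbf b}$, along the in-plane normal to $\overline{P_1P_3}$, and along $(0,0,1)$. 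Solving this homogeneous linear system determines $\mathbf c$ up to a single positive scalar $t$; I would then read off $\cos\theta_{3,P_1}=\mathbf c\cdot\hat{\mathbf b}=c_1\cos\theta_{2,P_1}+c_2\sin\theta_{2,P_1}$, and impose $|\mathbf c|=1$ to fix $t$ and produce the quotient with its square-root denominator. After collecting terms, the numerator collapses, via $\cos^2\theta_{2,P_1}+\sin^2\theta_{2,P_1}=1$, to exactly the quantity $\cos\theta_{2,P_1}\sin\theta_{12}\cos\theta_{13}+\cos\theta_{12}\sin\theta_{13}$ appearing in (\ref{angle2}).

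The routine but delicate part is the bookkeeping of orientations: one must choose consistently on which side of each edge the two incident faces lie, so that the signs in the two dihedral-angle relations are correct and $\cos\theta_{3,P_1}$ comes out with the intended sign. A second minor subtlety is that the most natural by-product of this computation carries $\sin^2\theta_{2,P_1}\sin^2\theta_{13}$ in the radicand, whereas (\ref{angle2}) is stated with $\sin^2\theta_{2,P_1}\sin^2\theta_{12}$; I expect to reconcile the two forms using the identity $\sin^2\theta_{12}-\sin^2\theta_{13}=\sin(\theta_{12}+\theta_{13})\sin(\theta_{12}-\theta_{13})$ together with the factorization $(1+\cos\theta_{2,P_1})\sin(\theta_{12}+\theta_{13})$ of the relevant numerator combination, which shows the two radicands agree. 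Apart from these sign and normalization checks, no serious obstacle is anticipated, since both identities reduce to elementary trigonometry once the coordinate frame is fixed.
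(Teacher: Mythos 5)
Your proposal is correct, and the two identities work out exactly as you outline. For (\ref{angle1}) your route differs from the paper's: you argue intrinsically, projecting the unit edge directions $\mathbf b,\mathbf c$ (toward $P_3,P_4$) onto the plane orthogonal to $\mathbf a$ (toward $P_2$) and identifying $\theta_{12}$ with the angle between the projections --- the spherical law of cosines at the corner $P_1$ --- whereas the paper stays in the coordinate frame of Fig.~\ref{fig:relation} and works with face normals, writing $\cos\theta_{12}=-\textbf{n}_{3}\cdot\textbf{n}_{4}$ with $\textbf{n}_{3}$ parallel to $\textbf{n}_{12}\times\textbf{n}_{14}$ and then eliminating the unknown component of $\textbf{n}_{14}$ through $\cos\theta_{3,P_1}=\textbf{n}_{13}\cdot\textbf{n}_{14}$. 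Your version is more elementary and coordinate-free; the paper's buys the normal-vector bookkeeping it reuses for the second identity. For (\ref{angle2}) the two computations are essentially the same argument in disguise: the vector $\mathbf c$ you obtain from the two dihedral constraints is, up to your positive scalar $t$, exactly the cross product $\textbf{n}_{2}\times\textbf{n}_{3}$ that the paper normalizes, and both produce the numerator $N=\cos\theta_{2,P_1}\sin\theta_{12}\cos\theta_{13}+\cos\theta_{12}\sin\theta_{13}$. Where your write-up is genuinely stronger is the final step you flag as a subtlety: the computation naturally yields the radicand $M^{2}+\sin^{2}\theta_{2,P_1}\sin^{2}\theta_{13}$ with $M=\cos\theta_{2,P_1}\cos\theta_{12}\sin\theta_{13}+\sin\theta_{12}\cos\theta_{13}$, and matching this to the radicand printed in (\ref{angle2}) requires precisely the reconciliation you sketch: from
\begin{equation*}
M-N=(1-\cos\theta_{2,P_1})\sin(\theta_{12}-\theta_{13}),\qquad M+N=(1+\cos\theta_{2,P_1})\sin(\theta_{12}+\theta_{13}),
\end{equation*}
one gets $M^{2}-N^{2}=\sin^{2}\theta_{2,P_1}\bigl(\sin^{2}\theta_{12}-\sin^{2}\theta_{13}\bigr)$, hence $M^{2}+\sin^{2}\theta_{2,P_1}\sin^{2}\theta_{13}=N^{2}+\sin^{2}\theta_{2,P_1}\sin^{2}\theta_{12}$, so the two quotients coincide. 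The paper passes over this point silently, jumping from its intermediate quotient to (\ref{angle2}) in one line; moreover, its printed components of $\textbf{n}_{2}$ carry $\sin\theta_{12}$ where $\sin\theta_{13}$ is required (a slip traceable to writing the unit-length constraint with $\cos^{2}\theta_{12}$ instead of $\cos^{2}\theta_{13}$, the actual third component of $\textbf{n}_{2}$), and with the corrected components one lands exactly on the $M$-form radicand above. So your plan, carried out, is complete and in fact supplies the bridging identity that the paper's own argument implicitly needs.
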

\begin{proof}
Let $ \textbf{n}_{_i}=(n_{_{i,1}},n_{_{i,2}},n_{_{i,3}})^T$ be the unit outer normal vector of the triangular face  $ T_{i} $,  and $\textbf{n}_{_{kl}}=\frac{\overrightarrow{P_{k}P_{l}}}{|\overline{P_{k}P_{l}|}}=(n_{_{kl,1}},n_{_{kl,2}},n_{_{kl,3}})^T  $ be the unit direction vector from  $ P_{k} $ to  $ P_{l} $. From Fig.~\ref{fig:relation}, it is easy to find that
\begin{equation*}
\begin{array}{lcl}
\textbf{n}_{_{12}}=(1,0,0)^T,\,\,\textbf{n}_{_4}=(0,0,-1)^T,\,\,\textbf{n}_{_3}=(0,-\sin\theta_{12},\cos\theta_{12})^T,\\[2mm]
\textbf{n}_{_{13}}=(\cos\theta_{_{2,P_1}},\sin\theta_{_{2,P_1}},0)^T,\,\,\textbf{n}_{_{14}}=(\cos\theta_{_{1,P_1}},n_{_{14,2}},n_{_{14,3}})^T,\,\,\textbf{n}_{_2}=(n_{_{2,1}},n_{_{2,2}},\cos\theta_{13})^T.
\end{array}
\end{equation*}

Since $\theta_{12} $ equals to the angle between  $-\textbf{n}_{_4}$ and $\textbf{n}_{_3}$, where $ \textbf{n}_{_3} $ has the same direction with  $ \textbf{n}_{_{12}}\times\textbf{n}_{_{14}} $, we have
\begin{equation}\label{lemmaeq:teta3,p1}
\cos\theta_{12}=-\textbf{n}_{_3}\cdot \textbf{n}_{_4}=-\dfrac{\textbf{n}_{_{12}}\times\textbf{n}_{_{14}}}{|\textbf{n}_{_{12}}\times\textbf{n}_{_{14}}|}\cdot \textbf{n}_{_4}=\dfrac{(0,n_{_{14,3}},-n_{_{14,2}})^T}{\sin\theta_{_{1,P_1}}}\cdot(0,0,-1)^T=\dfrac{n_{_{14,2}}}{\sin\theta_{_{1,P_1}}}.
\end{equation}
Noticing that $ \theta_{_{3,P_1}} $ is the angle between $\textbf{n}_{_{13}}$ and $\textbf{n}_{_{14}}$,  satisfying
%	    Since $ \theta_{_{3,P_1}} $ is the included angle of $\textbf{n}_{_{13}},\textbf{n}_{_{14}}$, 
$ \cos\theta_{_{3,P_1}}=\textbf{n}_{_{13}}\cdot\textbf{n}_{_{14}}=\cos\theta_{_{1,P_1}}\cos\theta_{_{2,P_1}}+\sin\theta_{_{2,P_1}}n_{_{14,2}} $,
we have
\begin{equation*}
\begin{array}{lcl}
n_{_{14,2}}=\dfrac{\cos\theta_{_{3,P_1}}-\cos\theta_{_{1,P_1}}\cos\theta_{_{2,P_1}}}{\sin\theta_{_{2,P_1}}},
\end{array}
\end{equation*}
which together with (\ref{lemmaeq:teta3,p1}) leads to (\ref{angle1}).

On the other hand, since $\theta_{_{3,P_{1}}} $ is the angle between $\textbf{n}_{_{13}}$ and $\textbf{n}_{_{14}}$, where $\textbf{n}_{_{14}}$ has the same direction with  $ \textbf{n}_{_2}\times\textbf{n}_{_3} $, we have
\begin{align}
\cos\theta_{_{3,P_{1}}}&=\textbf{n}_{_{13}}\cdot \textbf{n}_{_{14}}= \textbf{n}_{_{13}}\cdot\dfrac{\textbf{n}_{_2}\times\textbf{n}_{_3}}{|\textbf{n}_{_2}\times\textbf{n}_{_3}|}\nonumber\\
&=(\cos\theta_{_{2,P_1}},\sin\theta_{_{2,P_1}},0)^T\cdot\dfrac{(n_{_{2,2}}\cos\theta_{12}+\cos\theta_{13}\sin\theta_{12},-n_{_{2,1}}\cos\theta_{12},-n_{_{2,1}}\sin\theta_{12})^T}{\sqrt{(n_{_{2,2}}\cos\theta_{12}+\cos\theta_{13}\sin\theta_{12})^{2}+n_{_{2,1}}^{2}}}\nonumber\\
&=\dfrac{\cos\theta_{_{2,P_1}}(n_{_{2,2}}\cos\theta_{12}+\cos\theta_{13}\sin\theta_{12})-\sin\theta_{_{2,P_1}}n_{_{2,1}}\cos\theta_{12}}{\sqrt{(n_{_{2,2}}\cos\theta_{12}+\cos\theta_{13}\sin\theta_{12})^{2}+n_{_{2,1}}^{2}}}.\label{lemmaeq:costeat}
\end{align}
Noticing that 
$ \textbf{n}_{_2}\cdot\textbf{n}_{_{13}}=\cos\theta_{_{2,P_1}}n_{_{2,1}}+\sin\theta_{_{2,P_1}}n_{_{2,2}}=0$ and $
|\textbf{n}_{_2}|^{2}=n_{_{2,1}}^2+n_{_{2,2}}^2+\cos\theta_{12}^{2}=1 $,
one obtains
\begin{equation*}
\begin{split}
n_{_{2,1}}=-\sin\theta_{_{2,P_1}}\sin\theta_{12},\quad n_{_{2,2}}=\cos\theta_{_{2,P_1}}\sin\theta_{12},
\end{split} 
\end{equation*}
which together with (\ref{lemmaeq:costeat}) lead to (\ref{angle2}). 
 \end{proof}
\begin{lemma}\label{lemma:L-R}
For given $ K=\bigtriangleup^{4} P_{1}P_{2}P_{3}P_{4} $ (see Fig.~\ref{fig:relation}), the edge length $|\overline{P_1P_2}|  $ can be represented as follows
\begin{equation}\label{l_R}
|\overline{P_1P_2}|=2R_{K}\dfrac{\sin\theta_{12}}{\sqrt{\sin^2\theta_{12}+\cot^2\theta_{2,P_3}+\cot^2\theta_{1,P_4}-2\,\cot\theta_{2,P_3}\cot\theta_{1,P_4}\cos\theta_{12}}}.
\end{equation}
\end{lemma}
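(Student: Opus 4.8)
The plan is to exploit the classical fact that the circumcenter $O$ of the tetrahedron $K$ projects orthogonally onto each face plane to the circumcenter of that face, and then to read off the geometry directly in the coordinate frame of Fig.~\ref{fig:relation}. First I would place $P_1$ at the origin with $\overline{P_1P_2}$ along the $x_1$-axis and the face $\triangle P_1P_2P_3$ in the $x_1P_1x_2$-plane, and abbreviate $\ell:=|\overline{P_1P_2}|$. Since the two faces sharing the edge $\overline{P_1P_2}$ are $\triangle P_1P_2P_3$ and $\triangle P_1P_2P_4$, the dihedral angle between them along this edge is exactly $\theta_{12}$; hence the plane of $\triangle P_1P_2P_4$ is obtained from the $x_1x_2$-plane by a rotation of angle $\theta_{12}$ about the $x_1$-axis.

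Next I would locate the three circumcenters. All of $O$, $O_1$ and $O_2$ lie in the perpendicular-bisector plane $\{x_1=\ell/2\}$ of the chord $\overline{P_1P_2}$, which I identify with the $(x_2,x_3)$-plane through $M_{12}$. Applying the inscribed-angle theorem (equivalently the law of sines) in the two faces, where $\theta_{2,P_3}=\angle P_1P_3P_2$ and $\theta_{1,P_4}=\angle P_1P_4P_2$ are the angles subtending $\overline{P_1P_2}$, gives the distances from $M_{12}$ to the face circumcenters,
\[
|\overline{O_2M_{12}}|=\tfrac{\ell}{2}\cot\theta_{2,P_3},\qquad |\overline{O_1M_{12}}|=\tfrac{\ell}{2}\cot\theta_{1,P_4}.
\]
In the $(x_2,x_3)$-coordinates centred at $M_{12}$ this places $O_2$ on the $x_2$-axis and $O_1$ on the ray at angle $\theta_{12}$, namely $O_2=\big(\tfrac{\ell}{2}\cot\theta_{2,P_3},\,0\big)$ and $O_1=\tfrac{\ell}{2}\cot\theta_{1,P_4}(\cos\theta_{12},\sin\theta_{12})$.

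The heart of the argument is then to pin down $O=(O_{x_2},O_{x_3})$ using that it projects orthogonally onto $O_2$ on the $\triangle P_1P_2P_3$-line and onto $O_1$ on the $\triangle P_1P_2P_4$-line. The first projection forces $O_{x_2}=\tfrac{\ell}{2}\cot\theta_{2,P_3}$; the second, expressed as the dot product of $O$ with the unit direction $(\cos\theta_{12},\sin\theta_{12})$, gives the linear relation $O_{x_2}\cos\theta_{12}+O_{x_3}\sin\theta_{12}=\tfrac{\ell}{2}\cot\theta_{1,P_4}$, from which $O_{x_3}$ is solved explicitly. I would finish by substituting into $R_K^2=|\overline{OP_1}|^2=(\ell/2)^2+O_{x_2}^2+O_{x_3}^2$ and simplifying; the two $\cot^2\theta_{2,P_3}$ contributions recombine through $\sin^2\theta_{12}+\cos^2\theta_{12}=1$, yielding
\[
R_K^2=\frac{\ell^2}{4}\cdot\frac{\sin^2\theta_{12}+\cot^2\theta_{2,P_3}+\cot^2\theta_{1,P_4}-2\cot\theta_{2,P_3}\cot\theta_{1,P_4}\cos\theta_{12}}{\sin^2\theta_{12}},
\]
which rearranges at once to \eqref{l_R}.

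The main obstacle is the geometric bookkeeping rather than the algebra: I must justify carefully that $O$ projects to the face circumcenters (the standard property that the foot of the perpendicular from the tetrahedron's circumcenter to a face is that face's circumcenter) and that the projection onto the tilted face-line is correctly encoded by the inner product with $(\cos\theta_{12},\sin\theta_{12})$. Sign and orientation conventions for the dihedral angle and for which side of $\overline{P_1P_2}$ each circumcenter lies on also need a moment's care, but they do not affect the result, since only the squares of the coordinates of $O$ enter $R_K^2$.
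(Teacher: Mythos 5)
Your proposal is correct, and it rests on exactly the same geometric pillars as the paper's proof: writing $\ell=|\overline{P_1P_2}|$, the chord--cotangent relations $|\overline{O_1M_{12}}|=\tfrac{\ell}{2}\cot\theta_{1,P_4}$ and $|\overline{O_2M_{12}}|=\tfrac{\ell}{2}\cot\theta_{2,P_3}$, the fact that the circumcenter $O$ projects orthogonally onto the face circumcenters, and the final Pythagorean step $R_K^2=|\overline{OM_{12}}|^2+\ell^2/4$. Where you genuinely diverge is in how $|\overline{OM_{12}}|$ is extracted. The paper stays synthetic: since $\angle OO_1M_{12}=\angle OO_2M_{12}=\pi/2$, the four points $O,O_1,O_2,M_{12}$ are concyclic with diameter $\overline{OM_{12}}$, so the extended law of sines gives $|\overline{O_1O_2}|=|\overline{OM_{12}}|\sin\theta_{12}$, and the law of cosines on $\triangle O_1M_{12}O_2$ then produces $|\overline{OM_{12}}|^2$. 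You instead coordinatize the bisector plane $\{x_1=\ell/2\}$, turn the two projection conditions into linear equations for $(O_{x_2},O_{x_3})$, and compute $R_K^2$ directly; the concyclicity observation and the laws of sines and cosines never appear. Your route is more mechanical and, if one consistently uses signed coordinates (positive directions pointing toward the $P_3$ and $P_4$ sides within their respective face planes), it covers obtuse face angles automatically, whereas the synthetic argument tacitly places both circumcenters on the same sides as $P_3$ and $P_4$. One small correction: your closing claim that orientation conventions are harmless because \emph{only the squares of the coordinates of $O$ enter} $R_K^2$ is not the right reason --- the solved value of $O_{x_3}$ produces the sign-sensitive cross term $-2\cot\theta_{2,P_3}\cot\theta_{1,P_4}\cos\theta_{12}$ in the end. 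The correct justification is that the signed-coordinate placements of $O_1$, $O_2$ and the two projection equations hold verbatim in every configuration, so the identical algebra goes through unchanged.
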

\begin{proof}
The relationship between the diameter and the chord length in the circumcircles of $\bigtriangleup P_1P_2P_4$ and $\bigtriangleup P_1P_2P_3  $ yields
\begin{equation*}
\dfrac{2\,|\overline{O_1M_{12}}|}{|\overline{P_1P_2}|}=\cot\theta_{1,P_4},\quad
\dfrac{2\,|\overline{O_2M_{12}}|}{|\overline{P_1P_2}|}=\cot\theta_{2,P_3}.
\end{equation*}
Besides, the circumcenters  $ O$, $O_1$, $O_2 $ are on a common plane which passes through $ M_{12} $ and is perpendicular to  $\overline{P_{1}P_{2}}  $, and $ \angle OO_1M_{12}=\angle OO_2M_{12}=\pi/2 $. Thus, the four points $ O,O_1,O_2,M_{12} $ are on a common circle with the diameter $ |\overline{OM_{12}}| $. Then
\begin{equation*}
\dfrac{|\overline{O_1O_2}|}{ |\overline{OM_{12}}|}=\sin\theta_{12}.\\[0.2cm]
\end{equation*}
Substituting the above equations and  $ |\overline{OM_{12}}|^{2}+\frac{|\overline{P_{1}P_{2}}|^{2}}{4}=R_{K}^{2} $ into the law of cosines on $ \bigtriangleup O_1M_{12}O_2 $
\begin{equation*}
|\overline{O_1O_2}|^2=|\overline{O_1M_{12}}|^2+|\overline{O_2M_{12}}|^2-2\,|\overline{O_1M_{12}}||\overline{O_2M_{12}}|\cos\theta_{12},\\[0.2cm]
\end{equation*}
one arrives at (\ref{l_R}). 
 \end{proof}
Other relations of angles and lengths in $ K=\bigtriangleup^{4} P_{1}P_{2}P_{3}P_{4} $ similar to those given by Lemma~\ref{lemma:angles} and Lemma~\ref{lemma:L-R} can be easily derived. Without causing confusion, we use (\ref{angle1}), (\ref{angle2}) and (\ref{l_R}) to represent themselves and other similar relations.
\subsection*{B.2 A proof of Lemma~\ref{determine_relation}}\label{Apsubsection:2}
We shall give a proof of Lemma~\ref{determine_relation}. It suffices to verify that the three leading principal minors of  $ \frac{1}{h_{K}}\textbf{M}^{K}_{_{3\!\times\!3}} $ are positive. By Lemma~\ref{lemma:relation Li}, we have $ \frac{1}{h_{K}}R_{_1}=\frac{2|T_{1}|^{2}}{3h_{K}|K|} $ and
\begin{equation*}
R_{_1}R_{_2}=(\dfrac{2|T_{1}||T_{2}|}{3|K|})^{2}=(\dfrac{2|T_{1}||T_{2}|}{3|K|}\textbf{n}_{1}\cdot\textbf{n}_{2})^{2}(1+\tan^2\theta_{34})=r^2_{34}+|\overline{P_{3}P_{4}}|^2,
\end{equation*}
which leads to 
\begin{equation*}
\frac{1}{h_{K}^{2}}\left|\begin{array}{cc} 
R_{_1} & -r_{_{\!34}}\\
-r_{_{\!34}} & R_{_2}\\
\end{array}\right|=\frac{|\overline{P_{3}P_{4}}|^2}{h_{K}^{2}}.
\end{equation*}
Moreover, by $ 2S_{\triangle ABC}=|AB||AC|\sin\angle A $, we can write the determinant of $ \frac{1}{h_{K}}\textbf{M}^{K}_{_{3\!\times\!3}} $ as
\hspace{-2mm}
\begin{equation}\label{eq:A2-1}
\det(\frac{1}{h_{K}}\textbf{M}^{K}_{_{3\!\times\!3}})=\frac{1}{h_{K}^{3}}\left|\begin{array}{ccc} 
\frac{2|T_{1}|^{2}}{3|K|}&\!-\!\frac{2|T_{1}||T_{2}|}{3|K|}\cos\theta_{_{\!34}}&\!-\!\frac{2|T_{1}||T_{3}|}{3|K|}\cos\theta_{_{\!24}}\\[0.2cm]
\!-\!\frac{2|T_{1}||T_{2}|}{3|K|}\cos\theta_{_{\!34}}&\frac{2|T_{2}|^{2}}{3|K|}&\!-\!\frac{2|T_{2}||T_{3}|}{3|K|}\cos\theta_{_{\!14}}\\[0.2cm]
\!-\!\frac{2|T_{1}||T_{3}|}{3|K|}\cos\theta_{_{\!24}}&\!-\!\frac{2|T_{2}||T_{3}|}{3|K|}\cos\theta_{_{\!14}}&\frac{2|T_{3}|^{2}}{3|K|}
\end{array}\right|=\frac{c_{L}^{4}}{h_{K}^{3}(6|K|)^{3}}c_{d},
\end{equation}
where $ c_{L}= |\overline{P_{1}P_{4}}||\overline{P_{2}P_{4}}||\overline{P_{3}P_{4}}|$ and
\begin{equation*}
c_d=\left|\begin{array}{ccc} 
\sin^2\theta_{_{\!2,P_4}}\!&\,\,-\!\sin\theta_{_{\!2,P_4}}\!\!\sin\theta_{_{\!3,P_4}}\!\!\cos\theta_{_{\!34}}&\,\,-\!\sin\theta_{_{\!1,P_4}}\!\!\sin\theta_{_{\!2,P_4}}\!\!\cos\theta_{_{\!24}}\\[0.2cm]
\!-\!\sin\theta_{_{\!2,P_4}}\!\!\sin\theta_{_{\!3,P_4}}\!\!\cos\theta_{_{\!34}}&\,\,\sin^2\theta_{_{\!3,P_4}}\!&\,\,-\!\sin\theta_{_{\!1,P_4}}\!\!\sin\theta_{_{\!3,P_4}}\!\!\cos\theta_{_{\!14}}\\[0.2cm]
\!-\!\sin\theta_{_{\!1,P_4}}\!\!\sin\theta_{_{\!2,P_4}}\!\!\cos\theta_{_{\!24}}&\,\,-\!\sin\theta_{_{\!1,P_4}}\!\!\sin\theta_{_{\!3,P_4}}\!\!\cos\theta_{_{\!14}}&\,\,\sin^2\theta_{_{\!1,P_4}}\!\\
\end{array}\right|.
\end{equation*}
Relations (\ref{angle1}) and $ \sin^{2}\theta+\cos^{2}\theta=1 $ yield
\begin{equation}\label{eq:A2-2}
c_d=\left|\begin{array}{ccc} 
1\!-\!\cos^2\theta_{_{\!2,P_4}}\!&\cos\theta_{_{\!2,P_4}}\!\!\cos\theta_{_{\!3,P_4}}\!\!\!\!-\!\cos\theta_{_{\!1,P_4}}\!&\cos\theta_{_{\!1,P_4}}\!\!\cos\theta_{_{\!2,P_4}}\!\!\!\!-\!\cos\theta_{_{\!3,P_4}}\!\\[0.2cm]
\cos\theta_{_{\!2,P_4}}\!\!\cos\theta_{_{\!3,P_4}}\!\!\!\!-\!\cos\theta_{_{\!1,P_4}}\!&1\!-\!\cos^2\theta_{_{\!3,P_4}}\!&\cos\theta_{_{\!1,P_4}}\!\!\cos\theta_{_{\!3,P_4}}\!\!\!\!-\!\cos\theta_{_{\!2,P_4}}\!\\[0.2cm]
\cos\theta_{_{\!1,P_4}}\!\!\cos\theta_{_{\!2,P_4}}\!\!\!\!-\!\cos\theta_{_{\!3,P_4}}\!&\cos\theta_{_{\!1,P_4}}\!\!\cos\theta_{_{\!3,P_4}}\!\!\!\!-\!\cos\theta_{_{\!2,P_4}}\!&1\!-\!\cos^2\theta_{_{\!1,P_4}}\!\\
\end{array}\right|.
\end{equation}

For $ c_{L} $ in (\ref{eq:A2-1}), there is the volume formula that
\begin{align}
6|K|&=|T_{1}||\overline{P_{1}P_{4}}|\sin\theta_{_{\!2,P_4}}\!\sin\theta_{_{\!34}}=c_{L}\sin\theta_{_{\!2,P_4}}\!\sin\theta_{_{\!3,P_4}}\!\sqrt{1\!-\!\cos^{2}\theta_{_{\!34}}}\nonumber\\
&=c_{L}\sin\theta_{_{\!2,P_4}}\!\sin\theta_{_{\!3,P_4}}\!\sqrt{1\!-\!(\frac{\cos \theta_{_{1,P_{4}}}\!-\!\cos \theta_{_{2,P_{4}}}\cos \theta_{_{3,P_{4}}}}{\sin \theta_{_{2,P_{4}}}\sin \theta_{_{3,P_{4}}}})^{2}}
=c_{L}\sqrt{c_{K}},\label{eq:A2-3}
\end{align}
with $ c_{K}=1\!-\!\cos^2\theta_{1,P_4}\!\!-\!\cos^2\theta_{2,P_4}\!\!-\!\cos^2\theta_{3,P_4}\!\!+\!2\cos\theta_{1,P_4}\cos\theta_{2,P_4}\cos\theta_{3,P_4} $. 
For $ c_{d} $ in (\ref{eq:A2-1}), multiplying (\ref{eq:A2-2}) by a determinant whose value is 1, one obtains
\begin{equation}\label{eq:A2-4}
\begin{split}
c_d\left|\begin{array}{ccc} 
1&0&\,\,\,\,0\\
\cos\theta_{1,P_4}&1&\,\,\,\,0\\
\cos\theta_{3,P_4}&0&\,\,\,\,1\\
\end{array}\right|
=\left|\begin{array}{ccc} 
c_K&\,\,\,\cos\theta_{2,P_4}\cos\theta_{3,P_4}\!\!-\!\!\cos\theta_{1,P_4}&\,\,\cos\theta_{1,P_4}\cos\theta_{2,P_4}\!\!-\!\!\cos\theta_{3,P_4}\\[0.2cm]
0&\,\,\,1\!\!-\!\!\cos^2\theta_{3,P_4}&\,\,\cos\theta_{1,P_4}\cos\theta_{3,P_4}\!\!-\!\!\cos\theta_{2,P_4}\\[0.2cm]
0&\,\,\,\cos\theta_{1,P_4}\cos\theta_{3,P_4}\!\!-\!\!\cos\theta_{2,P_4}&\,\,1\!\!-\!\!\cos^2\theta_{1,P_4}\\
\end{array}\right|= c_K^2.
\end{split}
\end{equation}
Then, substituting (\ref{eq:A2-3}) and (\ref{eq:A2-4}) into (\ref{eq:A2-1}) yields
\begin{equation*}
\det(\frac{1}{h_{K}}\textbf{M}^{K}_{_{3\!\times\!3}})=\frac{6|K|}{h_{K}^{3}}.
\end{equation*}
Since $\mathcal{T}_{h}$ is a regular partition,  the proof is completed. 
\subsection*{B.3 A proof of Lemma~\ref{lemma:indentity}}\label{Apsubsection:3}
We shall give a proof of Lemma~\ref{lemma:indentity}. By representations (\ref{angle1}) and (\ref{angle2}), the remaining seven of the twelve plane angles $ \theta_{i_1,P_{i_2}}$  $ (i_1\in\mathcal{Z}_{3}^{(1)},i_2\in\mathcal{Z}_{4}^{(1)}) $  can be represented by $\Theta_{5}$ in the following order
\begin{equation*}
\begin{split}
&\Theta_{5}\xrightarrow{}
\left\{
\begin{array}{lcl}
\theta_{_{1,P_{4}}}=180^{\circ}-\theta_{_{1,P_{1}}}-\theta_{_{1,P_{2}}}\\[0.1cm]
\theta_{_{2,P_{3}}}=180^{\circ}-\theta_{_{2,P_{1}}}-\theta_{_{2,P_{2}}}
\end{array}
\right.
\xrightarrow{(\ref{angle1})}
\left\{
\begin{array}{lcl}
\theta_{12}=\arccos(\dfrac{\cos\theta_{_{3,P_{2}}}-\cos \theta_{_{1,P_{2}}}\cos \theta_{_{2,P_{2}}}}{\sin \theta_{_{1,P_{2}}}\sin \theta_{_{2,P_{2}}}})\\[0.35cm]
\theta_{24}=\arccos(\dfrac{\cos \theta_{_{2,P_{2}}}-\cos \theta_{_{1,P_{2}}}\cos\theta_{_{3,P_{2}}}}{\sin \theta_{_{1,P_{2}}}\sin\theta_{_{3,P_{2}}}})
\end{array}
\right. \\
&\xrightarrow{(\ref{angle1})}
\left\{
\begin{array}{lcl}
\theta_{_{3,P_{1}}}=\arccos(\cos\theta_{12}\sin \theta_{_{1,P_{1}}}\sin \theta_{_{2,P_{1}}}+\cos \theta_{_{1,P_{1}}}\cos \theta_{_{2,P_{1}}})\\[0.15cm]
\theta_{14}=\arccos(\dfrac{\cos \theta_{_{2,P_{2}}}-\cos \theta_{_{1,P_{2}}}\cos\theta_{_{3,P_{2}}}}{\sin \theta_{_{1,P_{2}}}\sin\theta_{_{3,P_{2}}}})
\end{array}
\right.\\
&\xrightarrow{(\ref{angle2})}
\begin{array}{lcl}
\theta_{3,P_4}=\arccos(\dfrac{\cos\theta_{24}\sin\theta_{14}+\cos\theta_{1,P_4}\sin\theta_{24}\cos\theta_{14}}{\sqrt{(\cos\theta_{24}\sin\theta_{14}+\cos\theta_{1,P_4}\sin\theta_{24}\cos\theta_{14})^2+\sin^2\theta_{1,P_4}\sin^2\theta_{24}}})
\end{array}\\
&\xrightarrow{(\ref{angle1})}\!\!
\begin{array}{lcl}
\theta_{_{2,P_{4}}}\!\!=\arccos(\cos\theta_{24}\sin \theta_{_{1,P_{4}}}\sin \theta_{_{2,P_{4}}}\!\!+\cos \theta_{_{1,P_{4}}}\cos \theta_{_{2,P_{4}}})
\end{array}
\!\!\!\!\xrightarrow{}\!\!
\left\{
\begin{array}{lcl}
\!\theta_{_{1,P_{3}}}\!=180^{\circ}\!-\theta_{_{3,P_{1}}}\!-\theta_{_{2,P_{4}}}\\[1.5mm]
\!\theta_{_{3,P_{3}}}\!=180^{\circ}\!-\theta_{3,P_{2}}\!-\theta_{_{3,P_{4}}}
\end{array}
\right..
\end{split}
\end{equation*}
Then  
by (\ref{angle1}), the remaining three dihedral angles $ \theta_{13},\theta_{23},\theta_{34} $ can be derived similarly.
%    Text of article.

%    Bibliographies can be prepared with BibTeX using amsplain,
%    amsalpha, or (for "historical" overviews) natbib style.
%	\bibliographystyle{amsplain}
%	%    Insert the bibliography data here.
%	\bibliography{references_FVMs_tetra}
%

\end{document}